\title{Reeb stability and the Gromov-Hausdorff limits of leaves in compact foliations}
\author{Pablo Lessa\thanks{LPMA, Université Pierre et Marie Curie, 4 place Jussieu, 75005 Paris, France.}\thanks{CMAT, Facultad de Ciencias, UdelaR, Iguá 4225, 11400 Montevideo, Uruguay. Email: lessa@cmat.edu.uy}}
\newcommand{\C}{\mathbb{C}}
\newcommand{\R}{\mathbb{R}}
\newcommand{\N}{\mathbb{N}}
\renewcommand{\d}{\mathrm{d}}
\newcommand{\g}{>}
\renewcommand{\l}{<}
\newcommand{\vol}{\text{vol}}
\newcommand{\manifolds}{\mathcal{M}}
\newcommand{\normal}{C_{\text{nor}}}
\newcommand{\transition}{C_{\text{tran}}}
\newcommand{\holonomy}{\text{hol}}
\newcommand{\gromovspace}{\mathcal{GH}}
\newcommand{\gromovdistance}{\mathrm{d}_{GH}}
\newcommand{\leafdistance}{d_{L}}
\newtheorem{theorem}{Theorem}[section]
\newtheorem{lemma}[theorem]{Lemma}
\newtheorem{corollary}[theorem]{Corollary}
\begin{document}

\maketitle

\begin{abstract}
We show that the Gromov-Hausdorff limit of a sequence of leaves in a compact foliation is a covering space of the limiting leaf which is no larger than this leaf's holonomy cover.  We also show that convergence to such a limit is smooth instead of merely Gromov-Hausdorff.  Corollaries include Reeb's local stability theorem, part of Epstein's local structure theorem for foliations by compact leaves, and a continuity theorem of Álvarez and Candel.  Several examples are discussed.
\end{abstract}

\tableofcontents

\section{Introduction}\label{introduction}

The Reeb local stability theorem \cite[Theorem 2]{reeb1947} states that if the fundamental group of a compact leaf in a foliation is finite then all nearby leaves are finite covers of it.  

It's apparent from the proof that, besides compactness, the key property of the leaf which yields stability isn't finiteness of its fundamental group, but finiteness of its holonomy group. This gives rise to the standard generalization given for example in \cite[pg. 70]{camacho-lins1985}.  

In the special case when a leaf is compact and has trivial holonomy one can strengthen the conclusion to yield that all nearby leaves are diffeomorphic to the given leaf (see \cite[Theorem 2]{thurston1974} where conditions under which one can guarantee trivial holonomy are discussed).

Another context in which a Reeb-type stability result appeared is in the study of compact foliations by compact leaves (by which we mean that all leaves are compact).

This family of foliations is surprisingly rich due to the fact that if the codimension is $3$ or more there may be leaves with arbitrarily large volume (see \cite{epstein-vogt1978}).  However, if one assumes that there is a uniform upper bound for the volume of all leaves then Epstein's local structure theorem \cite[Theorem 4.3]{epstein1976} yields that each leaf has a neighborhood consisting of leaves which are finite covers of it.

But, what can be said about stability of non-compact leaves?

For proper leaves in codimension one foliations some stability results in which one concludes that a leaf has a neighborhood consisting of leaves diffeomorphic to it have been obtained (see \cite{cantwell-conlon1981}, \cite{inaba1983} and the references therein).

However, both the partition into orbits and the center stable foliation of the geodesic flow of a compact hyperbolic surface are examples of foliations in which no leaf has this type of stability (in the first case because both periodic and non-periodic orbits are dense, in the second case because the center stable manifolds of periodic and non-periodic orbits are not diffeomorphic but both types of leaves are dense).

Informally one might define stability of a leaf as the property of having a neighborhood consisting of leaves which are `similar' to it.  The above examples suggest that, in order to obtain useful stability results which apply to recurrent non-compact leaves, the criterion used for measuring the similarity of two leaves should be weaker than diffeomorphism.

In \cite{alvarez-candel2003} Álvarez and Candel introduce the `leaf function' associating to each point in a foliation its leaf considered as a pointed metric space.  The codomain of this function is `Gromov space' which is the space of all pointed isometry classes of pointed proper metric spaces endowed with the topology of pointed Gromov-Hausdorff convergence.  They also present a program for studying the geometry (e.g. quasi-isometry invariants) of generic leaves in foliations.  One result in this program is that the leaf function of any compact foliation is continuous on the set of leaves (compact or otherwise) without holonomy (see \cite[Theorem 2]{alvarez-candel2003}).

In general a sequence of manifolds can converge in the Gromov-Hausdorff sense to a compact manifold without any element of the sequence being homeomorphic to the limit (for example one can shrink the handle on a sphere with one handle to obtain a sequence converging to a sphere, see \cite[Figure 7.4]{burago-burago-ivanov2001}).

However, families of manifolds having uniform curvature and injectivity radius bounds are known to be compact for stronger notions of convergence than Gromov-Hausdorff convergence (e.g. see \cite{anderson1990}).  Using such results one can conclude that on such families of manifolds Gromov-Hausdorff convergence is equivalent to a stronger form of convergence, and in particular that convergence of a sequence to a compact limit implies eventual diffeomorphism of the manifolds in the sequence to the limit.

Since the leaves of any compact foliation admit uniform curvature and injectivity radius bounds this shows that for compact foliations Álvarez and Candel's continuity theorem implies Reeb stability of compact leaves with trivial holonomy as a special case.

It seems natural to ask if further regularity properties of the leaf function might explain situations in which one knows that nearby leaves are covering spaces of a given leaf.

We will prove such regularity properties in Section \ref{maintheorems} below.

In particular we show that the leaf function of a compact foliation takes values in a compact subset of Gromov space on which Gromov-Hausdorff convergence coincides with the (a priori much stronger) notion of smooth convergence.

Furthermore the limit of a sequence of leaves is always a covering space of the `limiting' leaf and is itself covered by the holonomy cover of this leaf.

These results imply as corollaries Reeb's local stability theorem, the above-mentioned part of Epstein's local structure theorem, and Alvarez and Candel's continuity theorem.  Furthermore, they seem to clarify the behavior of leaves in concrete examples such as the Reeb transition.

Many interesting examples of foliations such as those coming from Control Theory (see \cite{sussmann1973} and \cite{stefan1974}) and the complex version of Hilbert's 16th problem (see \cite[Section 8]{ilyashenko2002}) have singularities.   For singular foliations of `Morse-Bott type' a Reeb-like stability result has been obtained in \cite[Theorem A]{scardua-seade2009}.  We note that the leaf function is still well defined for singular foliations and pointed Gromov-Hausdorff convergence (which allows manifolds to collapse) still seems relevant but our methods, which rely heavily on uniform curvature and injectivity radius bounds, fail completely.

Our article is organized as follows.  In Section \ref{definitionsandexamplessection} we define the family of foliations we will work on (we have chosen to work with maximal leafwise regularity and minimal transverse regularity), Gromov space, and the leaf function.  We also illustrate these concepts and our results with a series of examples.  In Section \ref{maintheorems} we state and prove the main theorems and discuss applications.  The rest of the article is devoted to building the tools used in the proofs of the main theorems.  The key points are the compactness theorem of Section \ref{uniformlyboundedgeometry} whose proof occupies the next several sections, the definition of the holonomy covering of Section \ref{holonomysection}, and the results on sequences of functions into foliations proven in Section \ref{leafwiseconvergencesection}.  In Section \ref{leafgeometrysection} we verify that the leaves of compact foliations do indeed admit uniform curvature and injectivity radius bounds.

\subsection*{Acknowledgements}

I would like to thank my advisors François Ledrappier and Matilde Martínez for their guidance, Liviu Nicolaescu, Karsten Grove, and Renato Bettiol for their help on geometry related questions,  Gábor Székelyhidi and Mehdi Lejmi for their seminar on Gromov-Hausdorff convergence which I attended to great benefit,  Rafael Potrie for providing the geometric interpretation of the Reeb transition foliation which I've illustrated below,  and Jesús A. Álvarez Lopez for several corrections and suggestions.

I'm also grateful to the University of Notre Dame for the time I've spent there while visiting François Ledrappier during which most of this work took form.

\section{Foliations, Gromov space and leaf functions}\label{definitionsandexamplessection}

In this section we define what we mean by a foliation (sometimes called a lamination by Riemannian leaves) and Gromov-space $\left(\gromovspace,\gromovdistance\right)$ which is a metric space whose elements are pointed isometry classes of proper metric spaces.   The leaf function of a foliation is a natural function into Gromov-space, we illustrate its regularity properties with several examples.

\subsection{Definitions}

By a $d$-dimensional foliation we mean a metric space $X$ partitioned into disjoint subsets called leaves.  Each leaf is assumed to be a continuously and injectively immersed $d$-dimensional connected complete Riemannian manifold.  We further assume that each $x \in X$ belongs to an open set $U$ such that there exists a Polish space $T$ and a homeomorphism $h:\R^d \times T \to U$ with the following properties:
\begin{enumerate}
 \item For each $t \in T$ the map $x \mapsto h(x,t)$ is a smooth injective immersion of $\R^d$ into a single leaf.
 \item For each $t\in T$ let $g_t$ be the metric on $\R^d$ obtained by pullback under $x \mapsto h(x,t)$ of the corresponding leaf's metric.  If a sequence $t_n$ converges to $t \in T$ then the Riemannian metrics $g_{t_n}$ converge smoothly on compact sets to $g_t$.
\end{enumerate}

Given a point $x$ in a foliation $X$ we denote by $(L_x,x,g_{L_x})$ the leaf of $x$ considered as a pointed Riemannian manifold with basepoint $x$.  We sometimes write only $L_x$ and leave the basepoint $x$ and metric $g_{L_x}$ implicit.  Homeomorphisms satisfying the conditions of $h$ above are called foliated parametrizations and their inverses are foliated charts.

We recall that in any metric space $(X,d)$ there is a natural distance between subsets, Hausdorff distance, which is defined by
\[d_H(A,B) = \inf\left\lbrace \epsilon \g 0: d(a,B) \l \epsilon\text{ and }d(A,b) \l \epsilon\text{ for all }a \in A\text{ and }b \in B\right\rbrace.\]

In what follows we use $B_r(x)$ to denote the open ball centered at a point $x$ in a metric space and $\overline{B_r}(x)$ to denote its closure.  A metric space is said to be proper if all closed balls are compact.

The Gromov-Hausdorff distance between two pointed proper metric spaces $(X_i,x_i,d_i)$ where $i=1,2$ is defined by
\[\gromovdistance\left(X_1,X_2\right) = \sum\limits_{n = 1}^{+\infty}2^{-n}\min\left(1,d_n\right)\left(X_1,X_2\right)\]
where
\[d_n\left(X_1,X_2\right) = \inf\left\lbrace d(x_1,x_2) + d_H(\overline{B_n}(x_1),\overline{B_n}(x_2))\right\rbrace\]
the infimum being taken over all distances $d$ on the disjoint union $\overline{B_n}(x_1) \sqcup \overline{B_n}(x_2)$ which coincide with $d_i$ when restricted to $\overline{B_n}(x_i)$ for $i=1,2$.

The notion of convergence induced by $\gromovdistance$ is called pointed Gromov-Hausdorff convergence which we abbreviate to $\gromovspace$-convergence.  For an introduction to this subject see \cite[Chapter 10]{petersen2006}.

The $\gromovspace$-distance satisfies the triangle inequality and is zero on a pair of spaces if and only if there is a pointed isometry between them.  Furthermore any proper pointed metric space is the $\gromovspace$-limit of a sequence of finite metric spaces.  Hence we may consider Gromov-space $\left(\gromovspace,\gromovdistance\right)$ which is the separable metric space obtained by endowing the set of pointed isometry classes of pointed proper metric spaces with the Gromov-Hausdorff distance (alternatively it's the metric completion of the set of isometry classes of finite pointed metric spaces with respect to $\gromovdistance$).

The leaf function of a foliation $X$ is the function from $X$ to $\gromovspace$ is defined by
\[x \mapsto L_x\]
where the leaf $L_x$ is considered up to pointed isometry.

We begin our study of the regularity of this function with a series of examples.

\subsection{Example: the vinyl record foliation}\label{vinylsection}

Consider a foliation of the closed annulus $\lbrace (x,y) \in \R^2: 1 \le x^2 + y^2 \le 2\rbrace$ such that the two boundary circles are leaves and all other leaves are spirals which accumulate on both boundary components.  The leaf function of such a foliation is clearly not continuous since there are leaves which are isometric to $\R$ accumulating on a leaf isometric to an Euclidean circle.

\begin{figure}[H]
\centering
\begin{minipage}{0.5\textwidth}
\includegraphics[width=\textwidth]{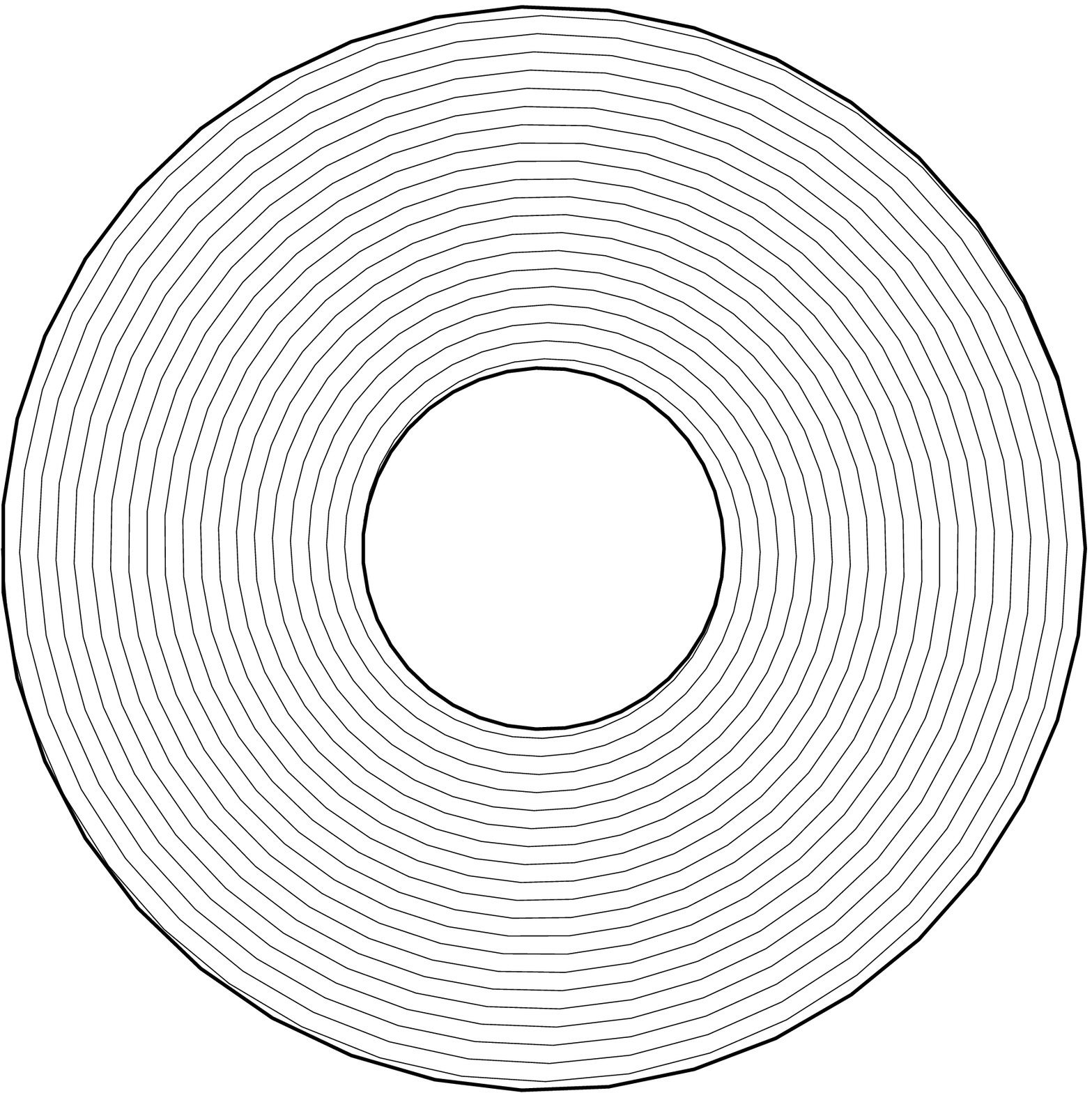}
\end{minipage}
\caption{The vinyl record foliation.}
\end{figure}

\subsection{Example: the Reeb cylinder}

Consider the foliation of the solid cylinder $C = \lbrace (x,y,z) \in \R^3: x^2 + y^2 \le \pi/2\rbrace$ where the boundary cylinder is a leaf and all other leaves are of the form $\lbrace (x,y,z) \in \R^d: z = t - \tan(x^2 + y^2)^2\rbrace$ for $t \in \R$.

In this example the leaf function is continuous but there are simply connected leaves accumulating on a non-simply connected leaf.   Hence the function
\[p \mapsto \widetilde{L}_p\]
associating to each point in $C$ the universal covering of its leaf, isn't continuous.

\begin{figure}[H]
\centering
\begin{minipage}{0.75\textwidth}
\includegraphics[width=\textwidth]{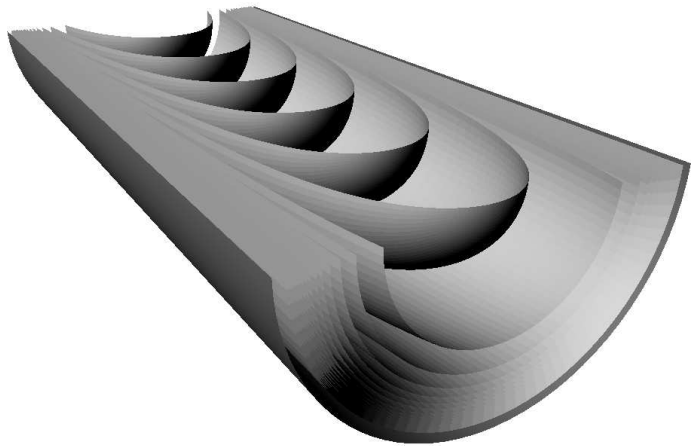}
\end{minipage}
\caption{A section of the Reeb cylinder.}
\end{figure}

\subsection{Example: the Reeb component}

One may take the quotient space of a Reeb cylinder by a translation along the axis to obtain a foliation of the solid torus normally called a Reeb component.

The leaf function of a Reeb component isn't continuous since for any sequence $x_n$ of interior points converging to a boundary point $x$ one has that the sequence of leaves $L_{x_n}$ converges to a cylinder $M$ while the leaf $L_x$ is a torus.

We notice that the cylinder $M$ is a covering space of the torus leaf $L_x$.  Furthermore one can choose a covering map from $M$ to $L_x$ in such a way that the image of the fundamental group of $M$ is exactly the set of curves in $L_x$ without holonomy.

Hence one sees that in this example the function
\[x \mapsto \widetilde{L_x}^{\holonomy}\]
associating to each point the holonomy covering of its leaf (see Section \ref{holonomysection}), is continuous.

\begin{figure}[H]
\centering
\begin{minipage}{0.75\textwidth}
\includegraphics[width=\textwidth]{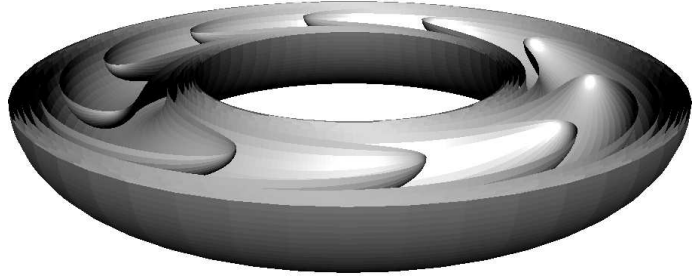}
\end{minipage}
\caption{Half a Reeb component.}
\end{figure}

\subsection{Example: the broken record foliation}

Consider a foliation of the closed annulus such that the inner boundary circle is accumulated on by trivially foliated annuli (i.e. annuli foliated by parallel circles) and also by copies of the vinyl record foliation.

The leaves inside the trivially foliated annuli have trivial holonomy and hence coincide with their holonomy covers.  However the inner boundary circle has non-trivial holonomy and hence its holonomy cover is isometric to $\R$.  This shows that the function
\[x \mapsto \widetilde{L_x}^{\holonomy}\]
isn't continuous.

\begin{figure}[H]
\centering
\begin{minipage}{0.5\textwidth}
\includegraphics[width=\textwidth]{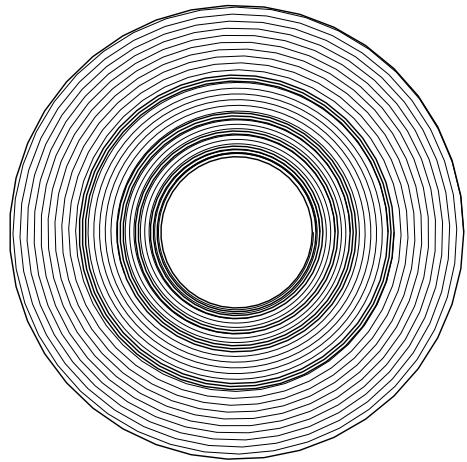}
\caption{A broken record foliation. Trivially foliated annuli and copies of the vinyl record foliation accumulate on the center circle.}
\end{minipage}
\end{figure}

\subsection{Example: the Reeb transition}\label{reebtransition}

The following example was introduced by Reeb in \cite{reeb1948}.

Consider the product Riemannian manifold $S^2 \times S^1 \times S^1$ where $S^2 = \lbrace (x,y,z) \in \R^2: x^2 + y^2 + z^2 = 1\rbrace$ is the standard two-dimensional sphere, and $S^1 = \lbrace z \in \C: |z| = 1\rbrace$ the standard circle.  We consider the coordinates $((x,y,z),e^{is}, e^{it})$ and the one forms
\[\left\lbrace \begin{array}{ll}\omega_1 &= \d t
\\ \omega_2 &= \left( (1-\sin(t))^2 + x^2\right) \d s + \sin(t) \d x   
  \end{array}\right.\]

The conditions of Frobenius' integrability theorem (see \cite[Theorem 2, pg. 185]{camacho-lins1985}) are satisfied and hence there is a unique two-dimensional foliation such that the tangent space of each leaf is contained in the kernel of $\omega_1$ and $\omega_2$.  The equation $\omega_1 = 0$ for vectors tangent to the foliation implies that each leaf is contained in a set of the form $S^2 \times S^1 \times \lbrace \text{constant}\rbrace$ and hence we may consider the foliation as a family of foliations on $S^2 \times S^1$ parametrized by $t$.

When $\sin(t) = 0$ it is easy to verify that one obtains the foliation of $S^2 \times S^1$ by leaves of the form $S^2 \times \lbrace \text{constant}\rbrace$.  However, when $\sin(t) = 1$ one has
\[\omega_2 = x^2 \d s + \d x\]
so that the torus in $S^2 \times S^1$ defined by $x = 0$ is a leaf, while the other leaves are planes parametrized by functions of the form
\[(x,y,z) \mapsto ((x,y,z), e^{i(c+1/x)})\]
on the hemispheres $x \l 0$ and $x \g 0$, for different values of the constant $c$.

Whenever $\sin(t) \neq 0$ one obtains a foliation of $S^2 \times S^1$ by spheres such that all leaves are obtained by applying a rotation to the $S^1$ components of a single leaf (i.e. they are all graphs of functions from $S^2$ to $S^1$ which in fact can be written explicitly).

Hence the set of spherical leaves is given by $\lbrace \sin(t) \neq 1\rbrace$, and the set of non-compact leaves is defined by $\lbrace \sin(t) = 1, x \neq 0\rbrace$.

One can explain this example geometrically.   By pasting two copies of the partition of the solid torus $D \times S^1$ into closed disks $D \times \lbrace\text{constant}\rbrace$ one can obtain the trivial foliation of $S^2 \times S^1$ by leaves of the form $S^2 \times \lbrace\text{constant}\rbrace$.  Pushing each disk at its center in the direction of the central circle of the solid torus one deforms the foliation but all leaves are still copies of $S^2$.  This is done in such a way that the number of turns each disk does around the solid torus diverges, at which point the boundary torus becomes a leaf and we obtain a foliation of $S^2 \times S^1$ by two Reeb components.  We call this process a Reeb transition.

Reeb noticed that in any such example there must be spherical leaves with arbitrarily large volume.  We will show that this is a consequence of the regularity properties of the leaf function.

\begin{figure}[H]\label{reebtransitionfigure}
\begin{minipage}{\textwidth}
\begin{minipage}{0.32\textwidth}
\includegraphics[width=\textwidth]{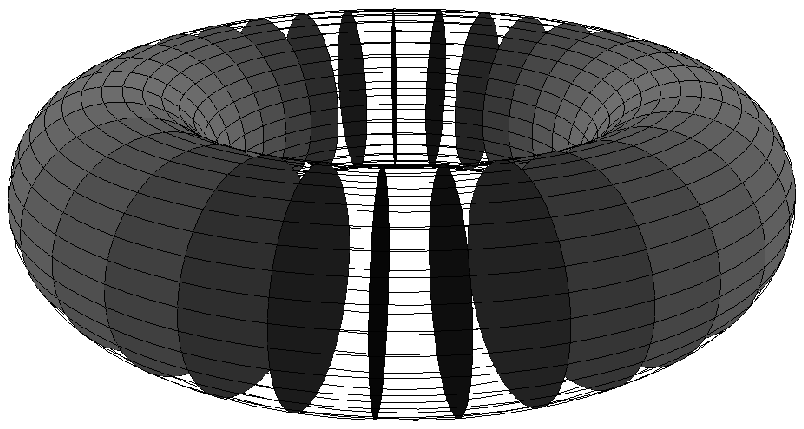}
\end{minipage}
\begin{minipage}{0.32\textwidth}
\includegraphics[width=\textwidth]{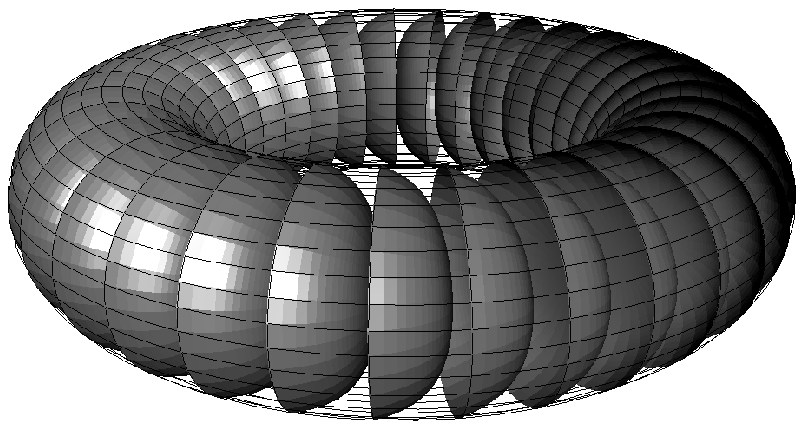}
\end{minipage}
\begin{minipage}{0.32\textwidth}
\includegraphics[width=\textwidth]{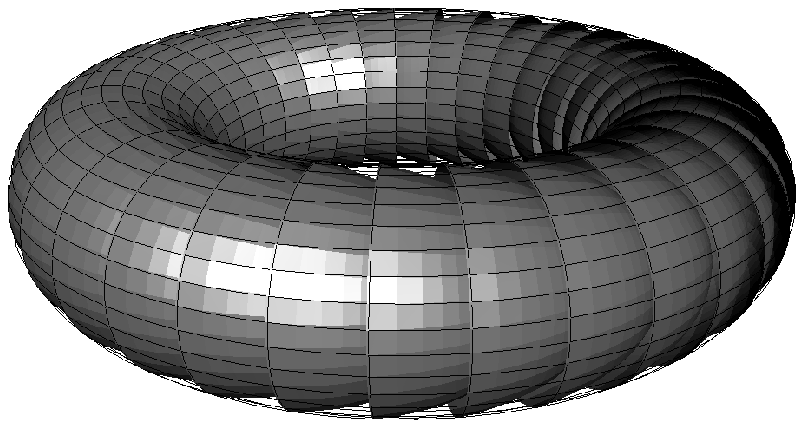}
\end{minipage}
\caption{A Reeb transition: the trivial partition of a solid torus into disks is deformed into a Reeb component.}
\end{minipage}
\end{figure}

\section{Regularity of leaf functions}\label{maintheorems}

In this section we state and prove our two main results after which we discuss applications to Reeb-type stability results and the Reeb transition example of the previous section.

\subsection{Regularity theorems}

A sequence of pointed complete connected Riemannian manifolds of the same dimension $(M_n,o_n,g_n)$ is said to smoothly converge to a pointed complete Riemannian manifold $(M,o,g)$ if there exists for each $r \g 0$  a sequence of pointed smooth embeddings $f_n:B_r(o) \to M_n$ of the open ball of radius $r$ centered at $o$ into $M_n$ defined for $n$ large enough with the property that the pullback Riemannian metrics $f_n^*g_n$ converge smoothly to $g$ on all compact subsets of $B_r(o)$ (see \cite[Chapter 10.3.2]{petersen2006} and Section \ref{topology}).

In principle smooth convergence of a sequence of manifolds is much stronger than $\gromovspace$-convergence of the same sequence.  However we can use compactness results from Riemannian geometry to obtain the following results.

\begin{theorem}[Precompactness of the leaf function]\label{foliationcompactnesstheorem}
Let $X$ be a compact $d$-dimensional foliation.  Then the leaf function of $X$ takes values in a compact subset $\manifolds$ of $\gromovspace$ which contains only complete Riemannian manifolds of dimension $d$.   Furthermore smooth and $\gromovspace$-convergence are equivalent on $\manifolds$.
\end{theorem}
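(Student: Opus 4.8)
The plan is to package two ingredients developed later in the paper: uniform geometry bounds for the leaves of a compact foliation (Section~\ref{leafgeometrysection}), and the $C^\infty$-compactness theorem for pointed Riemannian manifolds of uniformly bounded geometry (Section~\ref{uniformlyboundedgeometry}).

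First I would show that compactness of $X$ yields constants $i_0 \g 0$ and $C_0,C_1,\dots$ such that every leaf $L$, with its induced metric, satisfies $\mathrm{inj}_L \ge i_0$ and $|\nabla^k \mathrm{Rm}_L| \le C_k$ pointwise for all $k$. Cover $X$ by finitely many foliated parametrizations $h_i : \R^d \times T_i \to U_i$ whose restrictions to, say, the unit balls still cover $X$. Over a relatively compact piece the parameter $t$ ranges in a compact subset of $T_i$, and by condition (2) in the definition of a foliation the pullback metrics $g_t$, restricted to a fixed compact subset of $\R^d$, depend continuously on $t$ in the $C^\infty$ topology; their image is therefore $C^\infty$-compact, which bounds all covariant derivatives of curvature and bounds the injectivity radius at the center from below. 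Varying $i$ and the center then gives the asserted uniform bounds. This is exactly what Section~\ref{leafgeometrysection} carries out.

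Next, let $\mathcal{N}$ be the class of all pointed complete connected Riemannian $d$-manifolds satisfying the bounds $(i_0,C_0,C_1,\dots)$. The compactness theorem of Section~\ref{uniformlyboundedgeometry} asserts that $\mathcal{N}$ is sequentially compact in the pointed smooth topology, that every $\gromovspace$-limit of elements of $\mathcal{N}$ again lies in $\mathcal{N}$, and that on $\mathcal{N}$ a sequence $\gromovspace$-converges if and only if it converges smoothly. Granting this, define $\manifolds$ to be the closure in $\gromovspace$ of the image of the leaf function. By the first step this image lies in $\mathcal{N}$, hence so does $\manifolds$; thus $\manifolds$ is a closed subset of the compact metric space $\mathcal{N} \subset \gromovspace$, hence compact, and every element of $\manifolds$ is a complete Riemannian $d$-manifold. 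The final sentence of the theorem is then immediate, since smooth and $\gromovspace$-convergence already agree on $\mathcal{N} \supset \manifolds$ (a $\gromovspace$-convergent sequence in $\manifolds$ has its limit in $\manifolds$ because $\manifolds$ is closed, and the convergence is smooth because the sequence lies in $\mathcal{N}$).

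The real content, and the main obstacle, is the compactness theorem invoked in the second step: upgrading $\gromovspace$-convergence of bounded-geometry manifolds to smooth convergence and, in particular, showing that a $\gromovspace$-limit of such manifolds is genuinely a smooth $d$-manifold with the same bounds. Its proof (following Cheeger--Gromov and Anderson, cf.~\cite{anderson1990} and \cite[Chapter 10]{petersen2006}) uses a uniformly controlled atlas of harmonic coordinates to extract $C^{1,\alpha}$, and then $C^\infty$, limits, and it occupies Sections~\ref{uniformlyboundedgeometry} and the several that follow. The opposite implication, ``smooth $\Rightarrow$ $\gromovspace$'', is by contrast elementary: the pointed embeddings $f_n : B_r(o) \to M_n$ together with the $C^\infty$-convergent pullback metrics make $\overline{B_n}(o_n)$ and $\overline{B_n}(o)$ Hausdorff-close after identification for each fixed $n$, so that $\gromovdistance(M_n,M) \to 0$.
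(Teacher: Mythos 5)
Your proposal is correct and follows essentially the same route as the paper: uniform injectivity-radius and curvature-derivative bounds for the leaves (Section~\ref{leafgeometrysection}) place the image of the leaf function inside a bounded-geometry class $\manifolds\left(d,r,\lbrace C_k\rbrace\right)$, and Theorem~\ref{compactnessofmanifoldspaces} gives $\gromovspace$-compactness of that class together with the equivalence of smooth and $\gromovspace$-convergence on it. The only cosmetic differences are that the paper takes $\manifolds$ to be the whole bounded-geometry class rather than the closure of the image, and that its compactness theorem is proved via normal coordinates and Petersen's norms rather than the harmonic-coordinate route you allude to.
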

\begin{proof}
We establish in Section \ref{leafgeometrysection} that there exists $r \g 0$ and a sequence $C_k$ such that the injectivity radius of all leaves is at least $r$ and the tensor norm of $k$-th derivative of the curvature tensor of any leaf is at most $C_k$.

Hence all leaves belong to the set $\manifolds$ of (isometry classes of) pointed complete $d$-dimensional Riemannian manifolds with geometry bounded by $(r,\lbrace C_k\rbrace)$ (see Section \ref{uniformlyboundedgeometry}).

We establish in Theorem \ref{compactnessofmanifoldspaces} that $\manifolds$ is $\gromovspace$-compact and that a sequence in $\manifolds$ converges smoothly if and only if it $\gromovspace$-converges.
\end{proof}

\begin{corollary}
If $x_n$ is a sequence converging to a point $x$ in a compact foliation $X$ and the sequence of leaves $L_{x_n}$ $\gromovspace$-converges to a pointed metric space $M$ then, in fact, $M$ is a smooth complete Riemannian manifold and $L_{x_n}$ converges smoothly to $M$.  In particular if $M$ is compact then $L_{x_n}$ is diffeomorphic to $M$ for all $n$ large enough.
\end{corollary}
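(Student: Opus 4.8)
The plan is to read everything off of Theorem \ref{foliationcompactnesstheorem}. Since $X$ is compact, that theorem gives a $\gromovspace$-compact set $\manifolds \subset \gromovspace$ which contains every leaf of $X$ and consists only of complete $d$-dimensional Riemannian manifolds; in particular each $L_{x_n}$ lies in $\manifolds$. A compact subset of the (separable) metric space $\gromovspace$ is closed, so the $\gromovspace$-limit $M$ of the sequence $(L_{x_n})$ again belongs to $\manifolds$, and hence $M$ is a smooth complete Riemannian manifold of dimension $d$. Moreover $L_{x_n} \to M$ is a convergent sequence lying entirely inside $\manifolds$, so the last clause of Theorem \ref{foliationcompactnesstheorem} — that $\gromovspace$-convergence and smooth convergence coincide on $\manifolds$ — immediately upgrades this to smooth convergence of $L_{x_n}$ to $M$.

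For the final assertion I would argue as follows. Suppose $M$ is compact and fix a basepoint $o$ and a radius $r$ larger than the diameter of $M$, so that the ball $B_r(o)$ appearing in the definition of smooth convergence is all of $M$. Smooth convergence then produces, for every $n$ large enough, a pointed smooth embedding $f_n : M \to L_{x_n}$ with $f_n^* g_{L_{x_n}} \to g_M$ smoothly. Being an embedding, $f_n$ is in particular an immersion, and since $\dim M = \dim L_{x_n}$ it is a local diffeomorphism, hence an open map. Thus $f_n(M)$ is open in $L_{x_n}$; it is also closed, being the continuous image of a compact set. As $L_{x_n}$ is connected this forces $f_n(M) = L_{x_n}$, so $f_n$ is a bijective local diffeomorphism, i.e. a diffeomorphism, and $L_{x_n}$ is diffeomorphic to $M$ for all large $n$.

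I do not expect a genuine obstacle: the corollary is essentially a restatement of Theorem \ref{foliationcompactnesstheorem} together with the elementary passage from \emph{compact limit plus smooth convergence} to \emph{eventual diffeomorphism}. The only point that deserves a line of care is the surjectivity of the maps $f_n$ in the compact case — handled by the open-and-closed argument above — and it is worth remarking that this step needs no a priori upper volume bound on the leaves, connectedness of $L_{x_n}$ alone doing the work.
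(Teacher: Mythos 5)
Your proposal is correct and matches the paper's intent: the paper states this corollary as an immediate consequence of Theorem \ref{foliationcompactnesstheorem} (compactness of $\manifolds$ makes it closed, hence $M\in\manifolds$, and the equivalence of $\gromovspace$- and smooth convergence on $\manifolds$ does the rest), and later invokes without proof the same fact that smooth convergence to a compact limit forces eventual diffeomorphism. Your open-and-closed argument for surjectivity of the embeddings $f_n$ is exactly the standard way to fill in that last step.
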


By a Riemannian covering we mean a pointed local isometry $f:M \to N$ between complete pointed Riemannian manifolds.  If such a covering exists we say that $M$ is a Riemannian covering (or just a covering) of $N$ and that $N$ is covered by $M$.  See Section \ref{holonomysection} for the definition of the holonomy covering of a leaf.

\begin{theorem}[Semicontinuity of the leaf function]\label{semicontinuitytheorem}
Let $X$ be a compact foliation and $x_n$ be a sequence converging to a point $x \in X$.  If the sequence of leaves $L_{x_n}$ $\gromovspace$-converges to a pointed Riemannian manifold $M$ then $M$ is a Riemannian covering space of $L_x$ and is covered by $\widetilde{L_x}^{\holonomy}$.
\end{theorem}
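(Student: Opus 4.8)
The plan is to work in the compact foliation $X$ and use the foliated-chart structure together with the smooth convergence supplied by Theorem~\ref{foliationcompactnesstheorem}. First, by Theorem~\ref{foliationcompactnesstheorem} applied to the sequence $x_n \to x$, after passing to a subsequence the leaves $(L_{x_n},x_n)$ converge smoothly to $M$, so for every $r \g 0$ there are pointed smooth embeddings $f_n : B_r(o) \to L_{x_n}$ with $f_n^* g_{L_{x_n}} \to g_M$ smoothly on compacta; since $M$ is given to be the $\gromovspace$-limit, this smooth limit is $M$ itself. The goal is to produce a pointed local isometry $\pi : M \to L_x$ and to factor the holonomy covering of $L_x$ through $M$.

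The construction of $\pi$ proceeds by a local-to-global argument using the foliated parametrizations. Around $x$ pick a foliated parametrization $h : \R^d \times T \to U$ with $h(0,t_0) = x$; the hypothesis $x_n \to x$ means $x_n = h(p_n, t_n)$ with $t_n \to t_0$ in $T$ and $p_n \to 0$ in $\R^d$, and condition~(2) of the definition of a foliation says the pulled-back metrics $g_{t_n}$ converge smoothly on compacta of $\R^d$ to $g_{t_0}$. Thus on each plaque the local geometry of $L_{x_n}$ converges to that of the plaque of $L_x$ through $x$. The idea is to exploit this on overlapping plaques: cover $L_x$ by plaques, and use that on each plaque the maps $f_n$ (which realize $M$ inside $L_{x_n}$) composed with the plaque-to-plaque identifications converge to an isometric immersion of a piece of $M$ into $L_x$. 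Concretely, I would develop $M$ along geodesics: fix a geodesic $\gamma$ in $M$ from $o$; its image under $f_n$ is (approximately, then by a limiting/equicontinuity argument exactly in the limit) a geodesic in $L_{x_n}$ starting near $x_n$; following this geodesic through successive foliated charts and passing to the limit using condition~(2), one obtains a geodesic in $L_x$ starting at $x$. This assignment $\gamma \mapsto$ (its endpoint in $L_x$) is well-defined because geodesics that agree to first order at $o$ in $M$ track geodesics in $L_{x_n}$ that agree to first order, hence have the same limiting endpoint; it is a local isometry because $\exp^M_o$, $\exp^{L_{x_n}}_{x_n}$ and $\exp^{L_x}_x$ all converge in the charts. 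This defines $\pi = \exp^{L_x}_x \circ (\exp^M_o)^{-1}$ locally, and globally by continuation along paths; that $\pi$ is a local isometry everywhere (not just near $o$) follows because the same argument applies with basepoint moved along $M$, using that smooth convergence is uniform on balls of every radius. A pointed local isometry between complete connected Riemannian manifolds is automatically a Riemannian covering map (completeness of $M$ gives the path-lifting property), so $M$ is a Riemannian covering of $L_x$.

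It remains to show $M$ is covered by $\widetilde{L_x}^{\holonomy}$, equivalently that $\pi_* \pi_1(M,o) \supseteq$ the kernel of the holonomy homomorphism of $L_x$ at $x$ — that is, every loop in $L_x$ at $x$ with trivial holonomy lifts to a loop in $M$. Let $c$ be such a loop. Using the definition of holonomy (Section~\ref{holonomysection}), trivial holonomy of $c$ means that nearby leaves return to themselves along $c$: tracking the point $x_n$ around $c$ through the chain of foliated charts that $c$ traverses, the holonomy transport sends $x_n$ back to $x_n$ for $n$ large (or at least to a point on $L_{x_n}$ whose distance to $x_n$ tends to $0$, which combined with the discreteness coming from the injectivity-radius bound $r$ forces equality for large $n$). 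Hence the lift of $c$ to $L_{x_n}$ starting at $x_n$ is a loop; transporting this through the embeddings $f_n$ and passing to the smooth limit shows the corresponding lift $\hat c$ of $c$ to $M$ starting at $o$ is a loop, i.e. $[c] \in \pi_*\pi_1(M,o)$. Therefore $\pi_*\pi_1(M,o)$ contains the kernel of holonomy, which means the holonomy cover $\widetilde{L_x}^{\holonomy} \to L_x$ factors through $\pi : M \to L_x$, i.e. $M$ is covered by $\widetilde{L_x}^{\holonomy}$.

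The main obstacle I anticipate is making the limiting argument fully rigorous: one must show that the locally-defined isometric pieces patch coherently into a globally well-defined map $\pi$, and that the homotopy class of a loop in $L_x$ genuinely lifts (not just locally) to $M$. This requires careful bookkeeping of how the embeddings $f_n$ behave under the plaque-chain transitions — essentially a compactness/diagonal argument showing that for each fixed radius the relevant data converge, combined with the uniformity of the injectivity-radius bound to rule out degeneration (collapse or the lift of a loop failing to close up). Relating the analytic statement "holonomy transport moves $x_n$ a distance tending to zero" to the exact combinatorial statement "the lift is a loop for large $n$" is where the injectivity-radius lower bound $r$ does essential work, and formulating this cleanly via the holonomy covering defined in Section~\ref{holonomysection} is the delicate point.
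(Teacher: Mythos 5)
Your overall strategy matches the paper's: upgrade $\gromovspace$-convergence to smooth convergence via Theorem \ref{foliationcompactnesstheorem}, pass the embeddings $f_n$ to a limiting local isometry $\pi:M\to L_x$, and then show that every loop in $L_x$ with trivial holonomy lifts closed, so that $\widetilde{L_x}^{\holonomy}$ covers $M$ by the classification of coverings. The holonomy half of your argument is sound and is essentially the paper's, stated in the contrapositive: the paper shows that if two distinct points of $M$ have the same image then the resulting closed curve $f\circ\alpha$ is the uniform limit of the non-closed curves $f_{n,r}\circ\alpha$ whose endpoints stay a definite leafwise distance apart, which forces non-trivial holonomy (Corollary \ref{nontrivialholonomy}).

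Where you genuinely diverge is the construction of $\pi$, and that is also where your proposal has its one real soft spot. You build $\pi$ by geodesic development, $\pi=\exp^{L_x}_x\circ(\exp^M_o)^{-1}$ near $o$ extended ``by continuation along paths.'' Two paths in $M$ from $o$ to the same point need not be homotopic, so path-independence of the continuation is exactly the monodromy problem; your justification (germs of geodesics agreeing to first order have the same limiting endpoint) only handles nearby paths, and you correctly flag this as the main obstacle without resolving it. The paper avoids the issue entirely by defining $\pi$ pointwise: it constructs an adapted distance on the compact space $X$ (Lemma \ref{adapteddistance}) under which the maps $f_{n,r}:B_r(o_M)\to X$ are equicontinuous, extracts a locally uniform limit $f_r$ by Arzel\`a--Ascoli, and checks in foliated charts that the limit is a leafwise local isometry landing in $L_x$ (Lemma \ref{leafwiseconvergence}); a diagonal argument in $r$ then gives $f:M\to L_x$. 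This has the further advantage that $\pi=f$ is by construction the uniform limit of the $f_{n,r}$, which is precisely the compatibility you need in your holonomy step when you ``transport the loop through the embeddings $f_n$ and pass to the limit'' --- with your separately-built $\pi$ that compatibility would be an additional claim to verify. To close your argument you would either have to prove path-independence of the development directly, or replace the development by a pointwise limit of the $f_n$ in $X$, which is what the paper does.
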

\begin{proof}
By Theorem \ref{foliationcompactnesstheorem} the leaf function takes values in a compact subspace of $\gromovspace$ where Gromov-Hausdorff and smooth convergence are equivalent.  Hence $M$ is a complete Riemannian manifold and the sequence converges smoothly to $M$.

By smooth convergence (see Section \ref{topology}), for each $r \g 0$ there is a sequence of pointed embedding $f_{n,r}:B_r(o_M) \to L_{x_n}$ (defined for $n$ large enough) such that $|f_{n,r}^*g_{L_{x_n}} - g_M|_{g_M}$ converges uniformly to $0$ on $B_r(o_M)$.  We show in Lemma \ref{leafwiseconvergence} that this implies that the maps $f_{n,r}$ have a subsequence which converges locally uniformly to a local isometry $f_r:B_r(o_M) \to L_x$.

Now consider the family of functions $f_r:B_r(o_M) \to L_x$ when $r \to +\infty$.  Since all these functions are local isometries one obtains a local isometry $f:M \to L_x$ as a the uniform limit on compact subsets $f_{r_k}$ for some subsequence $r_k \to +\infty$.  Hence $M$ is a Riemannian covering of $L_x$ via the covering map $f$.

Suppose that for some pair of distinct points $x,y \in M$ one has $f(x) = f(y)$ and let $\alpha:[0,1] \to M$ be a curve joining $x$ and $y$.  Take $r \g 0$ large enough so that $B_r(o_M)$ contains $\alpha([0,1])$ and let $f_{n,r}:B_{r}(o_M) \to L_{x_n}$ be a sequence of embeddings as above which converges locally uniformly to $f$ on $B_r(o_M)$.

Since each $f_{n,r}$ is injective and the pullback metrics converge to $g_M$ the leafwise distance between $f_{n,r}(x)$ and $f_{n,r}(y)$ is bounded below by a positive constant for $n$ large enough.  However since $f_{n,r}\circ \alpha$ converges uniformly to $f\circ \alpha$ we obtain that the holonomy along the closed curve $f\circ \alpha$ is non-trivial (see Corollary \ref{nontrivialholonomy}).  

We have established that any closed curve in $L_x$ having a lift under $f$ which isn't closed has non-trivial holonomy.  In particular the lift of any curve with trivial holonomy in $L_x$ is closed in $M$ and hence the image of the fundamental group of $M$ under $f$ contains the subgroup of curves with trivial holonomy.  By the classification of covering spaces (see Lemma \ref{coverings}) $\widetilde{L_x}^\holonomy$ is a Riemannian cover of $M$.
\end{proof}

\subsection{Applications to continuity and Reeb stability}

The main result of \cite{epstein-millett-tischler1977} is that in any foliation the set of leaves without holonomy is residual.  Combined with Theorem \ref{semicontinuitytheorem} we obtain that the leaf function is continuous on a residual set.  Potential applications of this result to the study of quasi-isometry invariants of leaves are discussed by Álvarez and Candel in \cite[Section 2]{alvarez-candel2003}.

\begin{corollary}[Álvarez-Candel continuity theorem]\label{alvarez-candel}
The leaf function of any compact foliation is continuous on the set of leaves without holonomy.  In particular the set of continuity points contains a residual set.
\end{corollary}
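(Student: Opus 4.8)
The plan is to deduce this corollary directly from Theorem \ref{semicontinuitytheorem} together with the precompactness statement of Theorem \ref{foliationcompactnesstheorem}. Fix a leaf $L_x$ without holonomy and a sequence $x_n \to x$ in $X$; we must show $L_{x_n} \to L_x$ in $\gromovspace$. First I would argue by contradiction: if $L_{x_n}$ does not converge to $L_x$, then since by Theorem \ref{foliationcompactnesstheorem} the leaf function takes values in the $\gromovspace$-compact set $\manifolds$, some subsequence $L_{x_{n_k}}$ $\gromovspace$-converges to a pointed Riemannian manifold $M \neq L_x$ (as an element of $\manifolds$). This is the only place compactness is used — it upgrades "does not converge" to "a subsequence converges to a genuine limit in $\manifolds$", so that Theorem \ref{semicontinuitytheorem} becomes applicable.

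Next I would apply Theorem \ref{semicontinuitytheorem} to the convergent subsequence: $M$ is a Riemannian covering space of $L_x$ and is covered by $\widetilde{L_x}^{\holonomy}$. But $L_x$ has no holonomy, so its holonomy covering is $L_x$ itself (this should be immediate from the definition of the holonomy cover in Section \ref{holonomysection}: the holonomy cover corresponds to the kernel of the holonomy homomorphism, which is all of $\pi_1(L_x)$ when the holonomy is trivial, hence the cover is trivial). Therefore the covering map $L_x = \widetilde{L_x}^{\holonomy} \to M$ is a pointed Riemannian covering, and composing with the covering $M \to L_x$ gives a pointed local isometry $L_x \to L_x$; since both are connected complete Riemannian manifolds of the same dimension and the basepoint is preserved, a degree/cardinality-of-fiber count forces $M \to L_x$ to have fibers of a single point, i.e.\ to be a pointed isometry. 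Thus $M = L_x$ in $\gromovspace$, contradicting $M \neq L_x$. This proves continuity at every leaf without holonomy.

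For the final sentence, I would invoke \cite{epstein-millett-tischler1977}: the set of leaves without holonomy is residual in $X$, so the set of continuity points of the leaf function, which contains it, is residual as well. This is exactly the remark made just before the corollary statement, so it requires no further work beyond citing the theorem.

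The main obstacle — really the only subtle point — is the bookkeeping that a local isometry $L_x \to L_x$ of a connected complete Riemannian manifold that arises as a composition $L_x \to M \to L_x$ of Riemannian coverings must be a bijection, so that $M \cong L_x$. One must be slightly careful because $L_x$ need not be compact and the coverings need not be finite-sheeted, so a naive degree argument does not literally apply; instead one argues that the composition is a covering map $L_x \to L_x$ whose induced map on $\pi_1$ is an isomorphism onto a subgroup, combined with the fact that $M \to L_x$ is already surjective on $\pi_1$ modulo trivial-holonomy classes, to conclude all three maps are isomorphisms on fundamental groups and hence all are one-sheeted. Everything else is a routine contradiction argument feeding the two main theorems into each other.
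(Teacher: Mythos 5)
Your argument is correct and is exactly the paper's intended derivation: compactness of $\manifolds$ (Theorem \ref{foliationcompactnesstheorem}) upgrades non-convergence to a subsequence converging to some $M\in\manifolds$, Theorem \ref{semicontinuitytheorem} sandwiches $M$ between $L_x$ and $\widetilde{L_x}^{\holonomy}=L_x$, and the residual statement is just the Epstein--Millett--Tischler theorem. The only adjustment I would make is to drop the ``degree/cardinality-of-fiber count'' phrasing entirely in favor of the subgroup argument you give at the end: by Lemma \ref{coverings} the chain $L_x \to M \to L_x$ of pointed coverings corresponds to nested subgroups $\pi_1(L_x)\subseteq K\subseteq \pi_1(L_x)$, forcing $K=\pi_1(L_x)$ and $M\cong L_x$, which is the clean way to handle the non-compact, possibly infinitely-sheeted case you rightly worry about.
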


Smooth convergence of a sequence to a compact manifold implies that the sequence elements are eventually diffeomorphic to the limit.  Combined with Theorem \ref{semicontinuitytheorem} one obtains Reeb's local stability theorem (see \cite[Theorem 2]{reeb1947}).

\begin{corollary}[Reeb's local stability theorem]
Let $X$ be a compact foliation and $x \in X$ be such that $\widetilde{L_x}$ is compact.  Then there exists a neighborhood $U$ of $x$ such that $\widetilde{L_y}$ is diffeomorphic to $\widetilde{L_x}$ for all $y \in U$.
\end{corollary}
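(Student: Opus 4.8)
The plan is to deduce the statement from Theorems~\ref{foliationcompactnesstheorem} and~\ref{semicontinuitytheorem} by a contradiction/compactness argument, after a preliminary reduction concerning the leaf $L_x$. Since $\widetilde{L_x}$ is compact the group $\pi_1(L_x)$ is finite, so $L_x$, being the quotient of a compact manifold by a finite group of deck transformations, is itself a compact manifold. The holonomy group of $L_x$ is the image of $\pi_1(L_x)$ under the holonomy homomorphism and is therefore also finite; hence the holonomy cover $\widetilde{L_x}^{\holonomy}$, which lies between $L_x$ and $\widetilde{L_x}$, is a finite Riemannian cover of $L_x$ and in particular is compact.

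Now suppose, for contradiction, that no neighborhood $U$ as in the statement exists. Since $X$ is metrizable, hence first countable, there is a sequence $x_n \to x$ such that $\widetilde{L_{x_n}}$ is not diffeomorphic to $\widetilde{L_x}$ for every $n$. By Theorem~\ref{foliationcompactnesstheorem} the pointed leaves $(L_{x_n},x_n)$ all lie in the $\gromovspace$-compact set $\manifolds$, so after passing to a subsequence they $\gromovspace$-converge to some $(M,o_M)\in\manifolds$, which is a complete Riemannian manifold. By Theorem~\ref{semicontinuitytheorem}, $M$ is a Riemannian covering space of $L_x$ and is itself covered by $\widetilde{L_x}^{\holonomy}$; since the latter is compact, $M$ is compact. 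Because $\gromovspace$-convergence and smooth convergence agree on $\manifolds$, the convergence $L_{x_n}\to M$ is smooth, and smooth convergence to a compact manifold forces $L_{x_n}$ to be diffeomorphic to $M$ for all large $n$. Finally, as $M$ covers $L_x$, the composition $\widetilde{M}\to M\to L_x$ exhibits $\widetilde{M}$ as the universal cover of $L_x$, so $\widetilde{M}$ is isometric (in particular diffeomorphic) to $\widetilde{L_x}$; combined with $L_{x_n}\cong M$ this gives $\widetilde{L_{x_n}}$ diffeomorphic to $\widetilde{L_x}$ for all large $n$, contradicting the choice of the sequence. Hence the desired neighborhood exists.

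The only delicate point — the main obstacle, such as it is — is the transition from information about the leaves $L_y$ themselves, which is what Theorem~\ref{semicontinuitytheorem} directly provides, to the claimed statement about their universal covers $\widetilde{L_y}$. This is resolved by two observations made above: the universal cover of a covering space of $L_x$ coincides with the universal cover of $L_x$, and finiteness of $\pi_1(L_x)$ guarantees that all the intermediate covers appearing (in particular $\widetilde{L_x}^{\holonomy}$ and the limit $M$) are compact, so that the smooth convergence furnished by Theorem~\ref{foliationcompactnesstheorem} can be upgraded to eventual diffeomorphism. Everything else is a routine application of the two main theorems.
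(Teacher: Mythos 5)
Your proposal is correct and follows essentially the same route as the paper: the paper's (one-sentence) argument is precisely to combine Theorem \ref{semicontinuitytheorem} with the fact that smooth convergence to a compact limit forces eventual diffeomorphism, noting that compactness of $\widetilde{L_x}$ makes $\widetilde{L_x}^{\holonomy}$ and hence any limit $M$ compact. Your elaboration of the reduction ($\pi_1(L_x)$ finite, $\widetilde{M}\cong\widetilde{L_x}$ via composition of coverings) fills in exactly the details the paper leaves implicit.
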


The same argument gives the usual generalization of Reeb's stability theorem to compact leaves with trivial or finite holonomy (see for example \cite[pg. 70]{camacho-lins1985}).

\begin{corollary}[Stability of compact leaves with finite holonomy]\label{stabilityofcompacttrivialholonomy}
Let $X$ be a compact foliation and $x \in X$ be such that $\widetilde{L_x}^{\holonomy}$ is compact.  Then there is a neighborhood $U$ of $x$ such that for each $y \in U$ the leaf $L_y$ is compact and diffeomorphic to a covering space of $L_x$.
\end{corollary}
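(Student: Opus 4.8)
The plan is to deduce this purely from Theorems \ref{foliationcompactnesstheorem} and \ref{semicontinuitytheorem} by a compactness-and-contradiction argument, so essentially no new analytic work is required. Fix a countable neighborhood basis $U_1 \supset U_2 \supset \cdots$ of $x$. Suppose the conclusion fails; then for each $n$ we may pick a point $x_n \in U_n$ for which the leaf $L_{x_n}$ is either non-compact or not diffeomorphic to any covering space of $L_x$. In particular $x_n \to x$.

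By Theorem \ref{foliationcompactnesstheorem} every leaf of $X$ lies in the compact subset $\manifolds \subset \gromovspace$, so after passing to a subsequence the pointed leaves $L_{x_n}$ $\gromovspace$-converge to some $M \in \manifolds$. By Theorem \ref{semicontinuitytheorem}, $M$ is a Riemannian covering space of $L_x$ and is itself covered by $\widetilde{L_x}^{\holonomy}$. Since by hypothesis $\widetilde{L_x}^{\holonomy}$ is compact and the covering projection $\widetilde{L_x}^{\holonomy} \to M$ is a continuous surjection, $M$ is compact.

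Now I would invoke the corollary to Theorem \ref{foliationcompactnesstheorem}: because $L_{x_n}$ $\gromovspace$-converges to the compact Riemannian manifold $M$, the convergence is in fact smooth and $L_{x_n}$ is diffeomorphic to $M$ for all $n$ large enough. But $M$, being a Riemannian covering, is in particular a covering space of $L_x$, so for such $n$ the leaf $L_{x_n}$ is both compact and diffeomorphic to a covering space of $L_x$, contradicting the choice of the sequence. Hence some $U_n$ already has the asserted property, and one takes $U \subset U_n$; compactness of $L_y$ for $y \in U$ is part of what has just been shown.

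The only point beyond quoting the two main theorems is the elementary observation that a manifold admitting a compact covering space is itself compact, together with the mild care needed to fold both required conclusions (``compact'' and ``diffeomorphic to a covering of $L_x$'') into a single failure hypothesis so that the contradiction is clean. I therefore expect no real obstacle here: all the difficulty is already absorbed into Theorems \ref{foliationcompactnesstheorem} and \ref{semicontinuitytheorem} and the classification of covering spaces used in their proofs.
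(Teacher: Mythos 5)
Your proof is correct and follows essentially the same route the paper intends: extract a $\gromovspace$-convergent subsequence via Theorem \ref{foliationcompactnesstheorem}, use Theorem \ref{semicontinuitytheorem} to identify the limit $M$ as a covering of $L_x$ covered by the compact $\widetilde{L_x}^{\holonomy}$ (hence $M$ is compact), and conclude eventual diffeomorphism from smooth convergence to a compact limit. The only difference is presentational: the paper leaves this as ``the same argument'' as for Reeb's local stability theorem, whereas you have written out the compactness-and-contradiction scaffolding explicitly.
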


We say $X$ is a foliation by compact leaves if all leaves are compact.  The volume function of such a foliation is the function
\[x \mapsto \vol\left(L_x\right)\]
associating to each leaf its volume (which is finite).  Since a Riemannian covering has larger volume then the space it covers one obtains the following.

\begin{corollary}[Volume function semicontinuity]
Let $X$ be a compact foliation by compact leaves.  Then the volume function of $X$ is lower semicontinuous.
\end{corollary}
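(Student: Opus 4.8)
The plan is to deduce the volume function semicontinuity directly from Theorem~\ref{semicontinuitytheorem}. Let $x_n \to x$ in $X$; I want to show $\liminf_n \vol(L_{x_n}) \ge \vol(L_x)$. Since it suffices to prove this for an arbitrary subsequence realizing the $\liminf$, I may pass to a subsequence along which $\vol(L_{x_n})$ converges to $\ell := \liminf_n \vol(L_{x_n})$. If $\ell = +\infty$ there is nothing to prove, so assume $\ell < \infty$.

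First I would use Theorem~\ref{foliationcompactnesstheorem}: the leaves $L_{x_n}$ all lie in the $\gromovspace$-compact set $\manifolds$, so after passing to a further subsequence we may assume $L_{x_n}$ $\gromovspace$-converges to some $M \in \manifolds$, and in fact converges smoothly to $M$. Next, Theorem~\ref{semicontinuitytheorem} tells us $M$ is a Riemannian covering space of $L_x$ via a covering map $f : M \to L_x$. In particular $M$ is complete and, since each $L_{x_n}$ is compact with $\vol(L_{x_n}) \to \ell < \infty$, smooth convergence forces $M$ to have finite volume: for any $r$, the embeddings $f_{n,r} : B_r(o_M) \to L_{x_n}$ have pullback metrics converging smoothly to $g_M$, so $\vol_{g_M}(B_r(o_M)) = \lim_n \vol_{f_{n,r}^*g_{L_{x_n}}}(B_r(o_M)) \le \liminf_n \vol(L_{x_n}) = \ell$; letting $r \to \infty$ gives $\vol(M) \le \ell$. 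A complete manifold of finite volume that is a Riemannian covering of $L_x$ must be a \emph{finite}-sheeted covering (an infinite cover would have infinite volume since $L_x$, being a compact leaf, has positive volume and each sheet contributes that volume), say $k$-sheeted, whence $\vol(M) = k\,\vol(L_x) \ge \vol(L_x)$.

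It remains to rule out the possibility that $M$ has strictly smaller volume than each $L_{x_n}$ in a way that defeats the bound, i.e.\ to be careful that smooth convergence does not lose volume in the limit. The point is the reverse inequality is not needed: we only need $\vol(L_x) \le \vol(M) \le \ell$, and the first inequality holds because $f : M \to L_x$ is a Riemannian covering so $\vol(M) = (\text{number of sheets}) \cdot \vol(L_x) \ge \vol(L_x)$. Combining, $\vol(L_x) \le \liminf_n \vol(L_{x_n})$, which is exactly lower semicontinuity of the volume function.

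The main obstacle, and the only subtle point, is justifying $\vol(M) \le \liminf_n \vol(L_{x_n})$ from smooth convergence when the $L_{x_n}$ are not a priori assumed to have uniformly bounded volume; this is handled by the exhaustion-by-balls argument above, using that for compact $L_{x_n}$ one has $\vol_{f_{n,r}^*g_{L_{x_n}}}(B_r(o_M)) \le \vol(L_{x_n})$ since $f_{n,r}$ is an embedding, together with the smooth (hence volume) convergence of $f_{n,r}^*g_{L_{x_n}}$ to $g_M$ on the relatively compact balls exhausting $M$. Once $\vol(M) \le \ell$ is in hand, the covering-space structure from Theorem~\ref{semicontinuitytheorem} does the rest essentially for free.
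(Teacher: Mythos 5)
Your argument is correct and is exactly the route the paper intends: the paper's justification is the one-line remark that a Riemannian covering has volume at least that of the space it covers, combined with Theorems~\ref{foliationcompactnesstheorem} and~\ref{semicontinuitytheorem} applied to a subsequence realizing the $\liminf$. Your exhaustion-by-balls verification that smooth convergence gives $\vol(M)\le\liminf_n\vol(L_{x_n})$ is a careful filling-in of a detail the paper leaves implicit, and the detour through finite-sheetedness is harmless (any Riemannian covering of the compact leaf $L_x$ already satisfies $\vol(M)\ge\vol(L_x)$).
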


Notice that since any sequence of leaves has a smoothly convergent subsequence we obtain the following part of Epstein's structure theorem (see \cite[Theorem 4.3]{epstein1976}).

\begin{corollary}[Epstein]
Let $X$ be a compact foliation by compact leaves whose volume function is bounded.  Then every point $x \in X$ has a neighborhood $U$ such that for all $y \in U$ the leaf $L_y$ is diffeomorphic to a finite covering of $L_x$.
\end{corollary}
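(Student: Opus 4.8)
The plan is to combine the semicontinuity theorem (Theorem \ref{semicontinuitytheorem}) with the precompactness of the leaf function (Theorem \ref{foliationcompactnesstheorem}) and a standard Riemannian geometry fact about smooth convergence to compact limits. First I would fix a point $x \in X$ and argue by contradiction: suppose there is a sequence $x_n \to x$ such that for each $n$ the leaf $L_{x_n}$ is \emph{not} diffeomorphic to a finite covering of $L_x$. Since $X$ is a compact foliation by compact leaves with bounded volume function, each $L_{x_n}$ is a compact Riemannian manifold, and by Theorem \ref{foliationcompactnesstheorem} the leaf function takes values in a $\gromovspace$-compact set $\manifolds$; hence, after passing to a subsequence, the pointed leaves $L_{x_n}$ $\gromovspace$-converge to some $M \in \manifolds$, and by Theorem \ref{foliationcompactnesstheorem} this convergence is in fact smooth.

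Next I would apply Theorem \ref{semicontinuitytheorem}: the limit $M$ is a Riemannian covering space of $L_x$, and it is covered by $\widetilde{L_x}^{\holonomy}$. Because $L_x$ is compact and its holonomy cover is assumed to be \emph{finite} whenever we want finiteness of the covering (more precisely, here one uses the bounded-volume hypothesis together with Epstein's theorem hypotheses — but in fact the key point is simply that $M$, being a smooth limit of the compact manifolds $L_{x_n}$ with uniformly bounded volume, is itself compact), the covering map $f\colon M \to L_x$ has finite fibers. So $M$ is a finite Riemannian covering of $L_x$. Then I would invoke the elementary fact from Riemannian geometry (already used in the corollaries above, e.g. in the deduction of Reeb's theorem, and explained via \cite[Chapter 10.3.2]{petersen2006} and Section \ref{topology}) that if a sequence of compact Riemannian manifolds converges smoothly to a compact Riemannian manifold $M$, then the manifolds in the sequence are diffeomorphic to $M$ for all $n$ large enough. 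Applying this to our subsequence, $L_{x_n}$ is diffeomorphic to $M$, hence to a finite covering of $L_x$, for all large $n$ — contradicting our assumption. This contradiction shows that every $x$ has a neighborhood $U$ on which $L_y$ is diffeomorphic to a finite covering of $L_x$.

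One subtlety I would need to address carefully is \emph{compactness of the limit} $M$: $\gromovspace$-convergence of compact spaces to a compact space is automatic only if one knows the diameters stay bounded, and a priori a sequence of compact manifolds can $\gromovspace$-converge to a noncompact pointed space (this is exactly what happens in the Reeb transition example discussed in Section \ref{reebtransition}, where spherical leaves of unbounded volume appear). Here the bounded volume function rules this out: a smooth limit $M$ of manifolds with uniformly bounded volume has $\vol(M) \le \liminf \vol(L_{x_n}) < \infty$ by lower semicontinuity of volume under smooth convergence, and a complete Riemannian manifold with uniformly bounded geometry (as all members of $\manifolds$ have, by Theorem \ref{foliationcompactnesstheorem}) and finite volume is compact. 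So $M$ is compact, and then the covering $f\colon M\to L_x$ is automatically finite since $L_x$ is compact.

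The main obstacle I expect is precisely this volume/compactness bookkeeping — making sure that the bounded-volume hypothesis is used in the right place to force the Gromov-Hausdorff limit to be compact rather than a noncompact covering. Everything else is a direct assembly of results already proved: Theorem \ref{foliationcompactnesstheorem} gives precompactness plus the upgrade from $\gromovspace$- to smooth convergence, Theorem \ref{semicontinuitytheorem} identifies the limit as a covering of $L_x$, and the diffeomorphism-for-large-$n$ statement for smooth convergence to a compact manifold is standard. Since every subsequence of $x_n$ has a further subsequence along which $L_{x_n}$ is eventually diffeomorphic to a finite cover of $L_x$ (the finite cover possibly depending on the subsequence, but there are only finitely many finite covers up to the relevant bound), the contradiction argument closes cleanly and yields the neighborhood $U$ as required.
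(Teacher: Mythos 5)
Your proposal is correct and follows essentially the same route as the paper: extract a smoothly convergent subsequence via Theorem \ref{foliationcompactnesstheorem}, identify the limit as a covering of $L_x$ via Theorem \ref{semicontinuitytheorem}, use the bounded volume to force that covering to be finite (hence compact), and conclude eventual diffeomorphism from smooth convergence to a compact limit. Your care about compactness of the limit is exactly the point where the bounded-volume hypothesis enters; the final remark about there being only finitely many covers is unnecessary, since the contradiction already follows from $L_{x_n}$ being eventually diffeomorphic to the single finite cover $M$.
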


We say a foliation $X$ has codimension $k$ if it admits an atlas by foliated charts $\lbrace h_i:U_i \to \R^d \times T_i, i \in I\rbrace$ with $T_i = \R^k$ for all $i$.  Under this hypothesis $X$ is automatically a topological manifold.  

Notice that any holonomy transformations of a codimension one foliation will be a homeomorphism between two open subsets of $\R$.  We say a codimension one foliation is transversally orientable if every holonomy transformation associated to a closed chain of compatible charts is increasing.

The following elementary lemma implies that in a transversally oriented codimension one foliation by compact leaves all leaves have trivial holonomy (here we denote by $f^n(x) = f(f(\cdots f(x)\cdots))$ the $n$-th iterate of the point $x$ under the function $f$ and notice that in order for it to be well defined $f^k(x)$ must belong to the domain of $f$ for all $k = 0,\ldots,n-1$):
\begin{lemma}
Let $h: U \to V \subset \R$ be an increasing homeomorphisms between two neighborhoods of $0 \in \R$ such that $h(0) = 0$.  Then either $h$ is the identity  map or there exists $x \in U$ and $f = h^{\pm 1}$ such that the set $\lbrace f^n(x): n \ge 0\rbrace$ is well defined and infinite.
\end{lemma}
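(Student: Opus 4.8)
The plan is to handle the non-identity alternative by exhibiting a strictly monotone infinite orbit that descends to a fixed point of $h$. First I would dispose of the trivial case: if $h(t)=t$ for every $t\in U$, then $V=h(U)=U$ and $h$ is literally the identity map, so there is nothing to prove. Hence assume $h$ is not the identity. Throughout the argument I take $U$ (and so, after restriction, $V$) to be an interval neighborhood of $0$; this is the situation relevant to holonomy germs and is the reading under which the dichotomy is clean, since for the localization at $c$ below one needs $c$ to lie in the domain of $h$.

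The second step is a normalization. Passing from $h$ to $h^{-1}$, and conjugating by $x\mapsto -x$, both send an increasing homeomorphism between interval neighborhoods of $0$ fixing $0$ to another such map, and both carry infinite orbits to infinite orbits; since the conclusion permits $f=h^{\pm1}$, and an infinite orbit of a reflected map is the reflection of an infinite orbit of $h^{\pm1}$, after these operations I may assume there is a point $a>0$ in the domain of $h$ with $h(a)<a$. Concretely: pick $p$ with $h(p)\neq p$, necessarily $p\neq 0$; reflecting if $p<0$ we may take $p>0$; and if $h(p)>p$ we replace $h$ by $h^{-1}$ and $p$ by $h(p)\in V$, which then satisfies $h^{-1}(h(p))=p<h(p)$.

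Now define
\[
c \;=\; \inf\bigl\{\,t\in\R \;:\; [t,a]\subseteq U \ \text{and}\ h(s)<s \ \text{for all } s\in[t,a]\,\bigr\}.
\]
This set is nonempty (it contains $a$) and bounded below by $0$: a point $t\le 0$ in it would force $0\in[t,a]$ and hence $h(0)<0$, contradicting $h(0)=0$. Thus $0\le c\le a$, so $c$ is an interior point of the interval $U$; and by the definition of the infimum one has $h(s)<s$ and $[s,a]\subseteq U$ for every $s\in(c,a]$. A continuity-and-minimality argument then gives $h(c)=c$: continuity yields $h(c)=\lim_{t\downarrow c}h(t)\le c$, and if $h(c)<c$ then, combining with $h<\mathrm{id}$ on $(c,a]$, one gets $h<\mathrm{id}$ on some interval $[c-\eta,a]\subseteq U$ with $\eta>0$, placing $c-\eta$ in the set defining $c$ — a contradiction.

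Finally, for $s\in(c,a]$ we have $c=h(c)<h(s)<s\le a$ (using that $h$ is strictly increasing), so $h$ maps $(c,a]$ into itself. Therefore the sequence $x_0=a$, $x_{n+1}=h(x_n)$ is well defined with every $x_n\in(c,a]\subseteq U$, and it is strictly decreasing because $h<\mathrm{id}$ on $(c,a]$; hence the $x_n$ are pairwise distinct and $\{x_n:n\ge 0\}$ is infinite. Undoing the normalization, this is an infinite orbit of $h$ (or of $h^{-1}$) based at a point of $U$, which is the second alternative. I expect the only real friction to be the bookkeeping in the normalization step — keeping straight which of $h$, $h^{-1}$, and their reflections is in play and checking the chosen starting point lands in the correct domain — together with the point that $c$ must lie in the domain of $h$, which is exactly where one uses that the neighborhood $U$ is (or may be taken to be) an interval.
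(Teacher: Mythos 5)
Your proof is correct. Note that the paper itself offers no proof of this lemma (it is asserted as ``elementary''), so there is nothing to compare against; your argument --- locate the largest fixed point $c$ of $h$ below a point $a$ with $h(a)<a$, observe that $h$ maps $(c,a]$ into itself, and iterate to get a strictly monotone (hence infinite) orbit --- is the standard one and is carried out completely. You are also right to insist on reading $U$ as an \emph{interval} neighborhood of $0$: as literally stated for arbitrary neighborhoods the lemma is false (e.g.\ $U=(-1,1)\cup(2,3)$, $V=(-1,1)\cup(4,5)$, with $h$ the identity on $(-1,1)$ and $h(x)=x+2$ on $(2,3)$, has no well-defined infinite orbit for either $h$ or $h^{-1}$), while the germ/interval reading is exactly what the application to holonomy of codimension-one foliations requires. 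The normalization bookkeeping ($h\mapsto h^{-1}$ and conjugation by $x\mapsto -x$) is handled correctly, since both operations preserve the class of maps considered and carry infinite forward orbits to infinite forward orbits of a map of the form $h^{\pm1}$.
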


Epstein established in \cite{epstein1972} that a flow on a $3$-manifold for which all orbits are periodic has the property that the periods are bounded.  This was later generalized to state that compact codimension two foliations by compact leaves have bounded volume functions (see \cite{edwards-millett-sullivan1977}).  Notice that these results are very subtle since they are false for foliations of codimension $3$ or more (see \cite{epstein-vogt1978}).  The codimension one case follows directly from our results and the above elementary lemma.
\begin{corollary}
Let $X$ be a connected compact transversally oriented codimension one foliation.  Then the leaf function of $X$ is continuous.  In particular all leaves are diffeomorphic and the volume function is continuous.
\end{corollary}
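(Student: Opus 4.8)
The plan is to reduce everything to the preceding corollaries once we know that \emph{every} leaf of $X$ has trivial holonomy; the transversal orientation together with the elementary lemma above will force this, using crucially that the leaves are compact.

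First I would argue by contradiction: suppose some leaf $L$ carries a non-trivial holonomy transformation. Pick a small transversal arc $\tau$ through a point $p \in L$ and identify $\tau$ with a neighborhood of $0 \in \R$, the point $p$ corresponding to $0$. Then the holonomy of a suitable loop in $L$ based at $p$ is represented by a homeomorphism $h$ between neighborhoods of $0$ with $h(0)=0$, and it is increasing because $X$ is transversally oriented. Since $L$ has non-trivial holonomy, $h$ is not the identity, so by the elementary lemma there is a point $x \in \tau$ (necessarily $x \neq 0$, as the orbit is infinite) and $f = h^{\pm 1}$ such that $\lbrace f^n(x): n \ge 0\rbrace$ is well defined and infinite. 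The "well defined" clause lets us transport $x$ repeatedly along the corresponding loop (or its reverse) while staying in a single leaf, so all the points $f^n(x)$ lie on one leaf $L_x$ and on $\tau$; hence $L_x \cap \tau$ is infinite.

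The point to get right is why this is impossible. By hypothesis $L_x$ is compact, so the injective immersion of $L_x$ into $X$ is an embedding and $L_x$ is a closed $d$-dimensional submanifold meeting the $1$-dimensional transversal $\tau$ transversally. As these have complementary dimension their intersection is discrete in $\tau$; being also closed in the compact arc $\tau$, it must be finite, contradicting the previous paragraph. Therefore every leaf of $X$ has trivial holonomy, and Corollary~\ref{alvarez-candel} then shows that the leaf function of $X$ is continuous on all of $X$. I expect this transition from the dynamical lemma to a leaf intersecting a transversal infinitely often, and the verification that compactness rules it out, to be the only genuinely non-formal step; the rest is bookkeeping.

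Finally, with $F: X \to \gromovspace$ the now globally continuous leaf function and all leaves compact, I would fix $x_0 \in X$ and consider $A = \lbrace x \in X : L_x \text{ is diffeomorphic to } L_{x_0}\rbrace$. If $x_n \to x$ in $X$ then $F(x_n) \to F(x) = L_x$, and since $L_x$ is a compact Riemannian manifold the corollary to Theorem~\ref{foliationcompactnesstheorem} gives that $L_{x_n}$ is diffeomorphic to $L_x$ for all large $n$; this makes $A$ both open and closed, so by connectedness of $X$ and $x_0 \in A$ we conclude $A = X$ and all leaves are diffeomorphic. Continuity of the volume function is then immediate, since $x_n \to x$ forces $L_{x_n} \to L_x$ smoothly with $L_x$ compact, hence $\vol(L_{x_n}) \to \vol(L_x)$.
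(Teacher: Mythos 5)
Your proposal is correct and follows exactly the route the paper intends: the paper leaves this corollary's proof implicit, asserting only that the elementary lemma forces every leaf of a transversally oriented codimension one foliation by compact leaves to have trivial holonomy, after which Corollary \ref{alvarez-candel}, the eventual-diffeomorphism consequence of Theorem \ref{foliationcompactnesstheorem} for compact limits, and connectedness finish the argument just as you describe. The one step I would phrase differently is the finiteness of $L_x \cap \tau$: since the transverse structure here is only topological, smooth transversality is not available, so argue via plaques instead --- each point of $L_x \cap \tau$ has a plaque neighborhood in $L_x$ meeting $\tau$ exactly once, so $L_x \cap \tau$ is a closed discrete subset of the compact leaf $L_x$ and hence finite.
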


For tranversally oriented codimension one foliations of connected manifolds Reeb's local stability combines with properties of one dimensional dynamics in the spirit of the lemma above to yield Reeb's global stability theorem (see \cite[Theorem 3]{reeb1947} and \cite[pg. 72]{camacho-lins1985}) which states that if a leaf has a compact universal cover than all leaves are diffeomorphic.

In view of these results one might conjecture that the set of leaves with compact universal cover, besides being open, is always closed.  However this is false as shown by the Reeb transition example given in Section \ref{reebtransition}.  We will now discuss some aspects of this example.

The fact that in the Reeb transition there must be spheres with arbitrarily large volume follows from Corollary \ref{alvarez-candel} and the smooth convergence given by Theorem \ref{foliationcompactnesstheorem}.  To see this consider a sequence of points $x_n$ belonging to compact leaves which converge to a point $x$ whose leaf is non-compact and notice that the sequence of manifolds $L_{x_n}$ smoothly converge to $L_x$.

Consider now in the same example a sequence $x_n$ on spherical leaves which converges to a point $x$ on the single torus leaf.  By Theorem \ref{semicontinuitytheorem} any smooth limit point of the sequence $L_{x_n}$ must either be a finite covering of the torus $L_x$ or the cylinder $\widetilde{L_x}^{\holonomy}$.  The first case is impossible because convergence to a compact limit would imply that the manifolds in the sequence $L_{x_n}$ are eventually diffeomorphic to the limit manifold which would have to be a torus.   Hence the sequence of spheres $L_{x_n}$ converges smoothly to the cylinder $\widetilde{L_x}^{\holonomy}$.

\section{Uniformly bounded geometry}\label{uniformlyboundedgeometry}

In this section we prove that certain subsets of $\gromovspace$ consisting of manifolds with `uniformly bounded geometry' are compact and that furthermore smooth and $\gromovspace$-convergence coincide on them.  This result was used in the proof of Theorem \ref{foliationcompactnesstheorem} and may also be of independent interest.

\subsection{Spaces of manifolds with uniformly bounded geometry}

We say a complete $d$-dimensional Riemannian manifold has geometry bounded by $r \g 0$ and a sequence $C_k$ if the injectivity radius of $M$ is at least $r$ at all points and the curvature tensor of $M$ satisfies
\[|\nabla^k R| \le C_k\]
for all $k$, where $\nabla$ denotes the covariant derivative and we are using the tensor norms induced by the Riemannian metric.

We use $\manifolds\left(d,r,\lbrace C_k\rbrace\right)$ to denote the subset of $\gromovspace$ consisting of all isometry classes of $d$-dimensional complete pointed Riemannian manifolds with geometry bounded by $r$ and the sequence $C_k$.

An element of $\manifolds\left(d,r,\lbrace C_k\rbrace\right)$ is represented by a triplet $(M,o_M,g_M)$ and two triplets represent the same element if there is a pointed isometry between them.   We will sometimes write $M \in \manifolds\left(d,r,\lbrace C_k\rbrace\right)$ in which case it is implied that the basepoint will be denoted by $o_M$ and the Riemannian metric by $g_M$.

\subsection{A smooth compactness theorem}

Usually $\gromovspace$-convergence of a sequence of manifolds is much weaker than smooth convergence.  However we will show they are equivalent on sets of manifolds with uniform bounded geometry.

To understand this it might be helpful to consider the following fact: Let $\mathcal{F}$ be a $C^1$ compact family of functions from the interval $[0,1]$ to $\R$. Then if a sequence $f_n$ in $\mathcal{F}$ converges uniformly to a limit $f$, in fact $f \in \mathcal{F}$ and the derivatives $f_n'$ converge uniformly to $f'$.

The proof can also be thought of as an application of the fact that a continuous bijective function whose domain is compact has a continuous inverse (in the setting of the previous paragraph the domain would be $\mathcal{F}$ with the $C^1$-topology the codomain would be the same set with the $C^0$-topology and function would be the identity).  The difficulty in our case is in establishing compactness of the domain plus a subtle technical point which is discussed immediately after the proof.

We will now state the main result of this section.

\begin{theorem}\label{compactnessofmanifoldspaces}
Let $\manifolds = \manifolds\left(d,r,\lbrace C_k\rbrace\right)$ for some choice of dimension $d$, radius $r$, and sequence $C_k$.  Then $\manifolds$ is a compact subset of $\gromovspace$ on which $\gromovspace$-convergence and smooth convergence are equivalent.
\end{theorem}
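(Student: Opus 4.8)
The plan is to prove the two assertions of Theorem~\ref{compactnessofmanifoldspaces} — compactness of $\manifolds$ in $\gromovspace$ and the equivalence of $\gromovspace$- and smooth convergence on $\manifolds$ — simultaneously, since the heart of the matter is a single precompactness argument. The key input is the Cheeger--Gromov--Anderson type smooth compactness theorem: a sequence of pointed complete Riemannian $d$-manifolds $(M_n,o_n,g_n)$ with a uniform lower bound $r$ on injectivity radius and uniform bounds $|\nabla^k R|\le C_k$ for all $k$ admits a subsequence converging smoothly to a pointed complete $d$-manifold $(M,o,g)$, and the limit again satisfies the same bounds (the injectivity radius lower bound and the curvature derivative bounds pass to smooth limits by continuity of the relevant quantities on compact sets). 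I would first prove: \emph{smooth convergence implies $\gromovspace$-convergence}. This is essentially formal — given the pointed embeddings $f_n:B_\rho(o)\to M_n$ with $f_n^*g_n\to g$ smoothly on compacta, the Riemannian distances on $B_\rho(o)$ induced by $f_n^*g_n$ converge uniformly to those induced by $g$ on slightly smaller balls, and because the injectivity radius bound prevents short cuts from outside, one controls the intrinsic distances on balls $\overline{B_n}(o_n)$ well enough to build the required metrics on the disjoint unions realizing small $d_n(M_n,M)$. I would record this as a lemma (or cite the relevant part of \cite{petersen2006}).

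Next I would establish \emph{$\gromovspace$-compactness of $\manifolds$}. Given a sequence in $\manifolds$, the smooth compactness theorem furnishes a subsequence converging smoothly to some pointed complete $d$-manifold $M$; the limit lies in $\manifolds$ because, as noted, the geometry bounds $(r,\{C_k\})$ are preserved under smooth convergence. By the lemma of the previous step the subsequence also $\gromovspace$-converges to $M$. Hence every sequence in $\manifolds$ has a $\gromovspace$-convergent subsequence with limit in $\manifolds$; since $\gromovspace$ is a metric space this gives sequential compactness, i.e.\ compactness, of $\manifolds$.

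Finally, the equivalence of the two notions of convergence on $\manifolds$. One direction is the lemma above. For the converse, suppose $M_n\to M$ in $\gromovspace$ with all $M_n, M\in\manifolds$ but $M_n\not\to M$ smoothly. Then there is a subsequence $M_{n_j}$ none of whose sub-subsequences converges smoothly to $M$. Apply smooth compactness to $M_{n_j}$ to extract a further subsequence converging smoothly — hence, by the lemma, $\gromovspace$-converging — to some $M'\in\manifolds$. But $M_{n_j}\to M$ in $\gromovspace$ and $\gromovspace$ is a metric space, so $M'=M$ as elements of $\gromovspace$, i.e.\ $M'$ is pointed-isometric to $M$; transporting the smooth convergence through this isometry contradicts the choice of the subsequence. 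Therefore $M_n\to M$ smoothly.

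The main obstacle is the smooth compactness theorem itself, which I am taking as a black box: its proof (harmonic coordinates, elliptic estimates bootstrapping the $C^{k,\alpha}$ bounds on metric coefficients from the curvature derivative bounds, and a diagonal extraction of embeddings on an exhaustion by balls) is precisely what the excerpt says "occupies the next several sections." The only genuinely delicate elementary point, flagged by the authors as "a subtle technical point," is the one lurking in Step~1: smooth convergence is phrased via embeddings of \emph{balls} $B_\rho(o)$, and one must check that the intrinsic metric on such a ball — and on the closed balls $\overline{B_n}(o_n)$ appearing in the definition of $d_n$ — is well approximated, which requires using the injectivity radius lower bound to rule out geodesics that leave the ball and return, so that distances realized inside $\overline{B_n}(o_n)$ are genuinely controlled by the pulled-back metrics on a fixed ball of radius $n+r$; I would handle this carefully when proving the lemma.
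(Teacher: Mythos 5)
Your overall architecture coincides with the paper's: (i) smooth precompactness of $\manifolds$, (ii) closedness of $\manifolds$ under smooth convergence (the bounds $(r,\{C_k\})$ pass to smooth limits), (iii) smooth convergence implies $\gromovspace$-convergence, followed by a subsequence argument to upgrade $\gromovspace$-convergence to smooth convergence on $\manifolds$. Items (i)--(iii) are exactly the three facts the paper isolates and proves in Sections \ref{precompactness}, \ref{closedness} and in its lemma on smooth versus Gromov--Hausdorff convergence, so up to that point you are on track.

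There is, however, a genuine gap in your final step, and it is precisely the one the paper itself flags (its first pass is labelled ``Proof with gap''). You write: ``Then there is a subsequence $M_{n_j}$ none of whose sub-subsequences converges smoothly to $M$.'' This inference from ``$M_n\not\to M$ smoothly'' is valid only if smooth convergence is induced by a topology (or at least has the corresponding Urysohn subsequence property). Smooth convergence is defined by an existential quantifier over sequences of embeddings, and its negation does not a priori yield such a subsequence. The paper's cautionary example is almost sure convergence: the indicator functions $f_{a_1\ldots a_r}$ of dyadic intervals do not converge almost everywhere to $0$, yet every subsequence has a further subsequence converging almost everywhere to $0$; for that non-topologizable mode of convergence the step you use is simply false. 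The same objection applies to the equivalent phrasing ``every subsequence has a smoothly convergent sub-subsequence with limit $M$, hence the whole sequence converges smoothly to $M$.'' To close the gap one must prove that smooth convergence, restricted to $\manifolds$, is topologizable; the paper does this (Lemmas \ref{coordinatefreeconvergence} and \ref{topologylemma}) by recasting smooth convergence in terms of the tensor norms $|\nabla^i(f_n^*g_n-g)|_g$ and exhibiting an explicit neighborhood base. Note also that you misidentify the ``subtle technical point'': it is this topologizability issue, not the (real but routine) matter of geodesics leaving the embedded balls in the proof that smooth convergence implies $\gromovspace$-convergence --- the latter is handled in the paper by embedding a ball of radius $3r$ and comparing lengths of curves, with no appeal to the injectivity radius bound.
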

\begin{proof}[Proof with gap]
The proof rests on the following facts
\begin{enumerate}
 \item The set $\manifolds$ is precompact with respect to smooth convergence.
 \item The set $\manifolds$ is closed under smooth convergence.
 \item Smooth convergence implies pointed Gromov-Hausdorff convergence.
\end{enumerate}

We will establish facts 1 and 2 in sections \ref{precompactness} and \ref{closedness} respectively.

Fact 3 is generally accepted (e.g. see \cite[Section 10.3.2]{petersen2006} and \cite[Section 7.4.1]{burago-burago-ivanov2001}) but we include a proof in the next subsection for completeness.

Using these facts the proof proceeds as follows.  

Given a sequence $M_n$ in $\manifolds$ we may, using smooth precompactness, extract a smoothly convergent subsequence $M_{n_k}$ with limit $M$.  Since $\manifolds$ is closed under smooth convergence we have $M \in \manifolds$.  Finally, since smooth convergence implies pointed Gromov-Hausdorff convergence one has
\[\lim\limits_{n \to +\infty}\gromovdistance(M_{n_k},M) = 0.\]

This establishes that $\manifolds$ is a compact subset of $\gromovspace$.

Suppose now that some sequence $M_n$ in $\manifolds$ converges in the pointed Gromov-Hausdorff sense to $M \in \manifolds$.  Since any subsequence of $M_n$ will have a further subsequence which converges smoothly and any smooth limit must in fact coincide with $M$ we obtain that the original sequence $M_n$ converges smoothly to $M$.
\end{proof}

There is a gap in the above proof which is illustrated by the following example (see Figure \ref{asconvergenceisnottopological}).

Consider the sequence of functions indexed on finite strings of zeros and ones defined by
\[f_{a_1\ldots a_r}:[0,1] \to \R\]
\[f_{a_1\ldots a_r}(x) = \left\lbrace \begin{array}{ll}1&\text{ if }\sum\limits_{k = 1}^r a_k 2^{-k} \l x \l 2^{-r} + \sum\limits_{k = 1}^r a_k 2^{-k}.
\\ 0 &\text{ otherwise.}\end{array}\right.\]

The sequence doesn't converge Lebesgue almost surely to any function.  However any subsequence has a further subsequence which converges almost surely to $0$.  In particular the arguments in our proof above would imply that $L^2$ convergence and almost sure convergence coincide on the set of functions $\left\lbrace0\right\rbrace \cup \left\lbrace f_{a_1\ldots a_r}\right\rbrace$ but this conclusion is false.

To exclude this type of behavior it suffices to show that smooth convergence comes from a topology.  We do this in Section \ref{topology}.

\begin{figure}[H]
\begin{minipage}{\textwidth}
\begin{minipage}{0.32\textwidth}
\includegraphics[width=\textwidth]{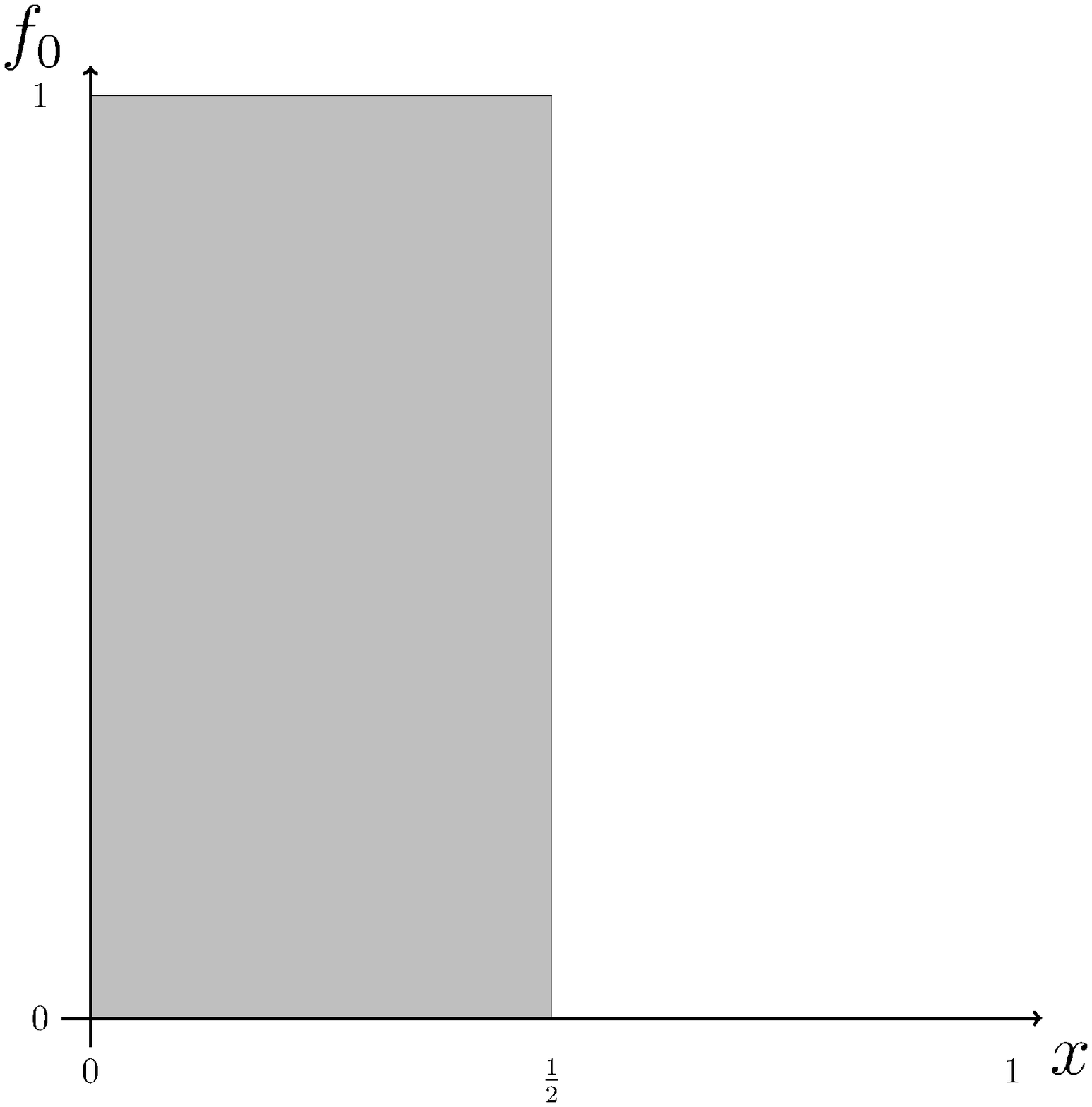}
\end{minipage}
\begin{minipage}{0.32\textwidth}
\includegraphics[width=\textwidth]{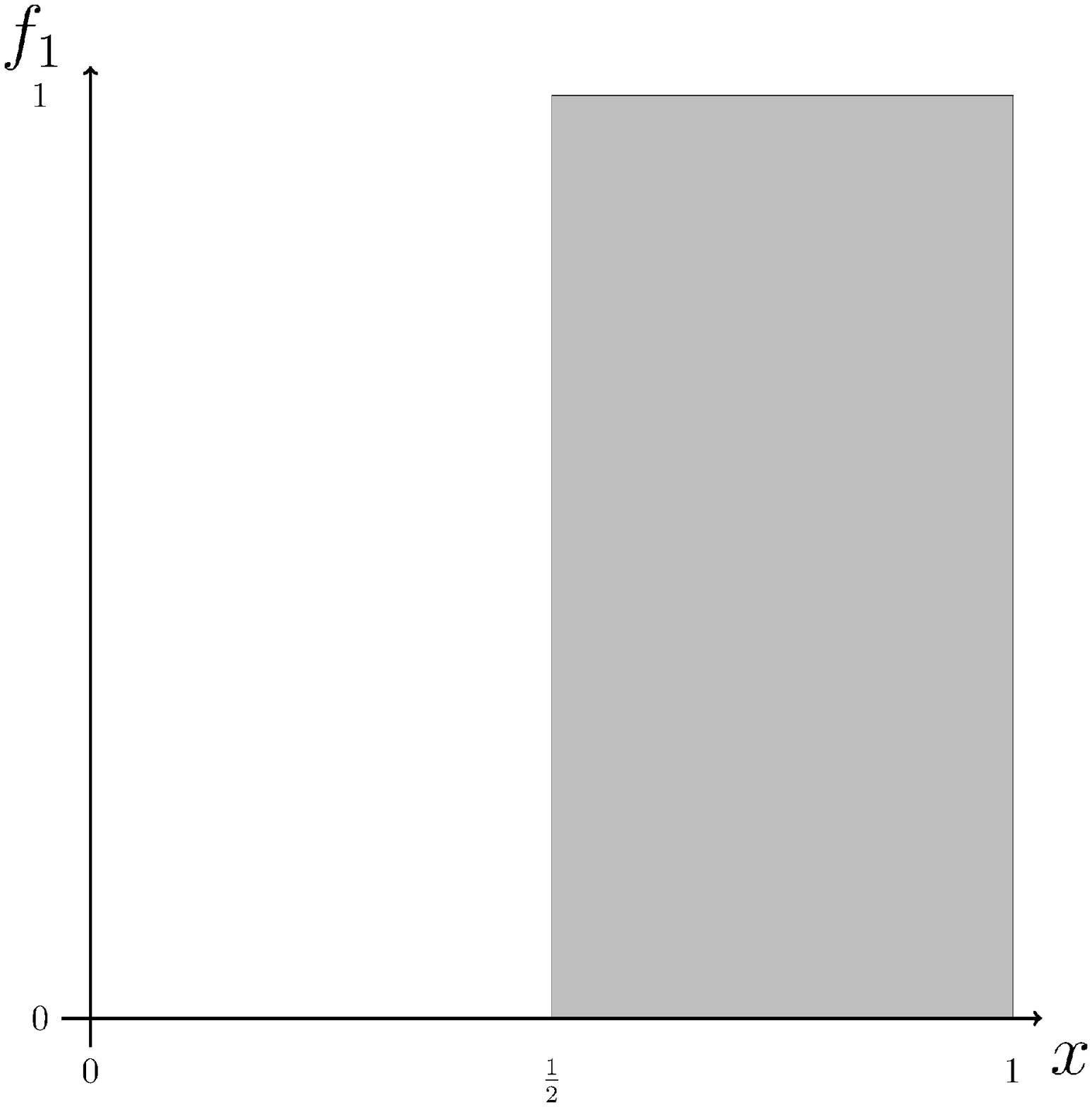}
\end{minipage}
\begin{minipage}{0.32\textwidth}
\includegraphics[width=\textwidth]{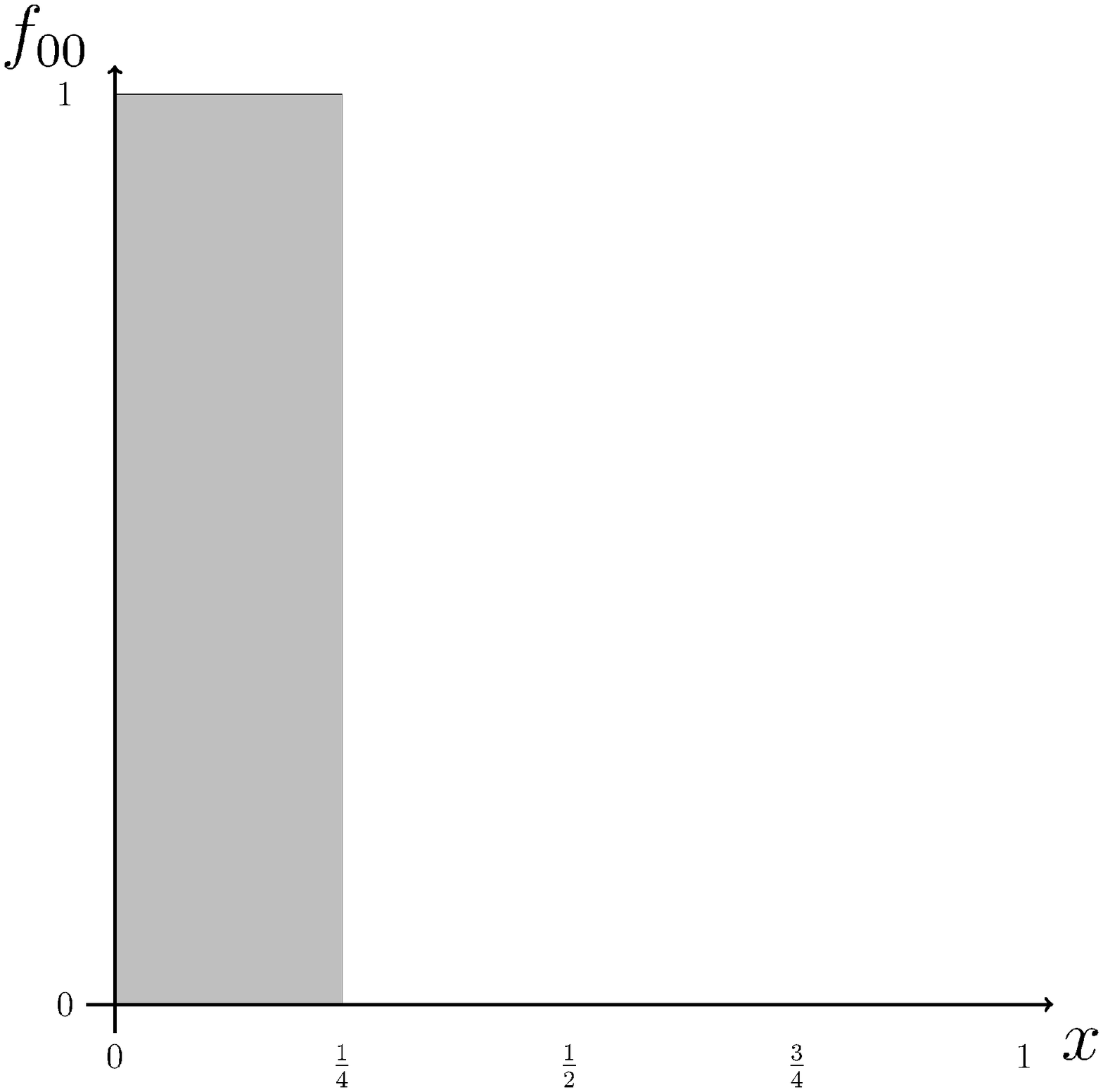}
\end{minipage}
\begin{minipage}{0.32\textwidth}
\includegraphics[width=\textwidth]{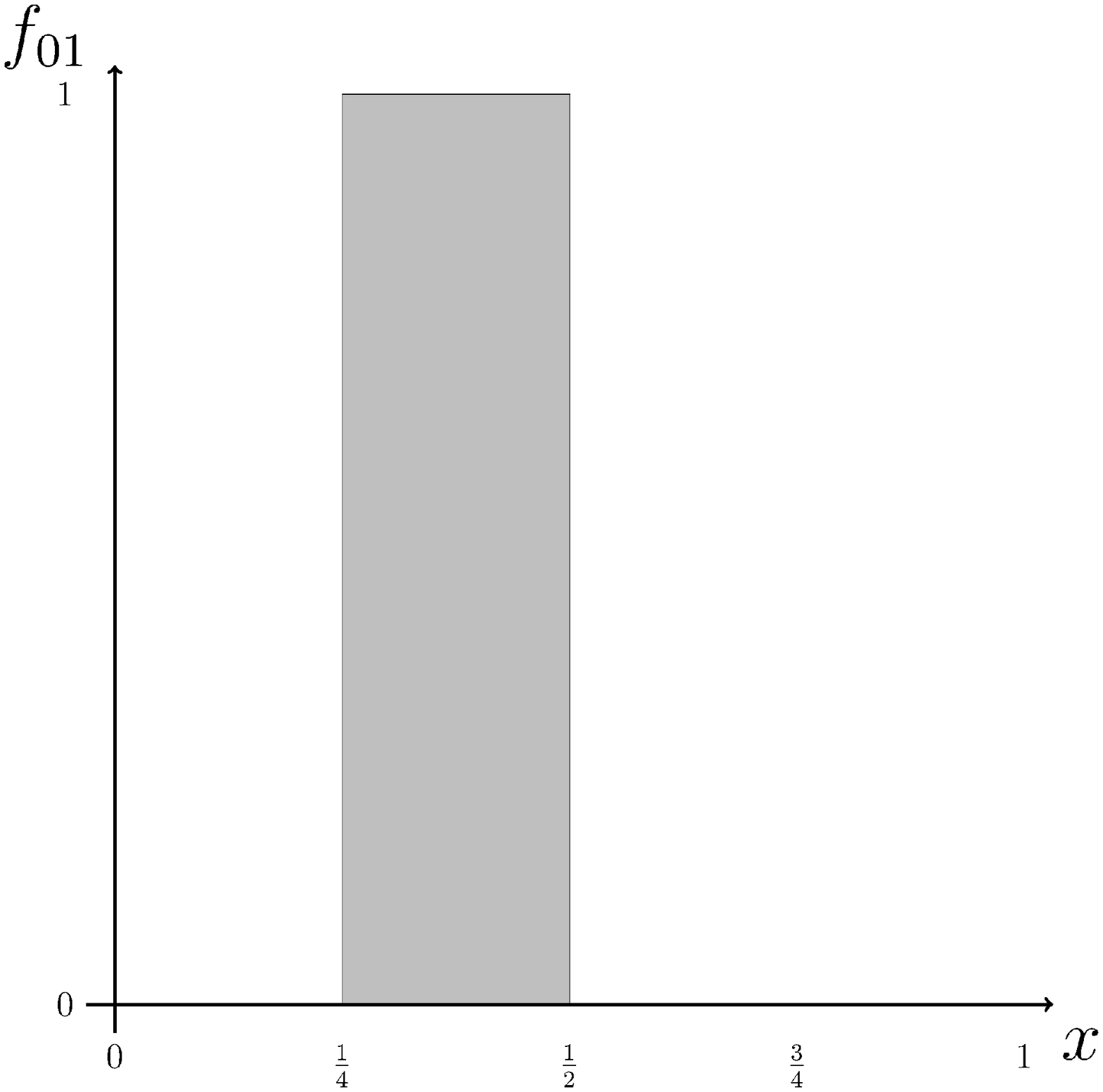}
\end{minipage}
\begin{minipage}{0.32\textwidth}
\includegraphics[width=\textwidth]{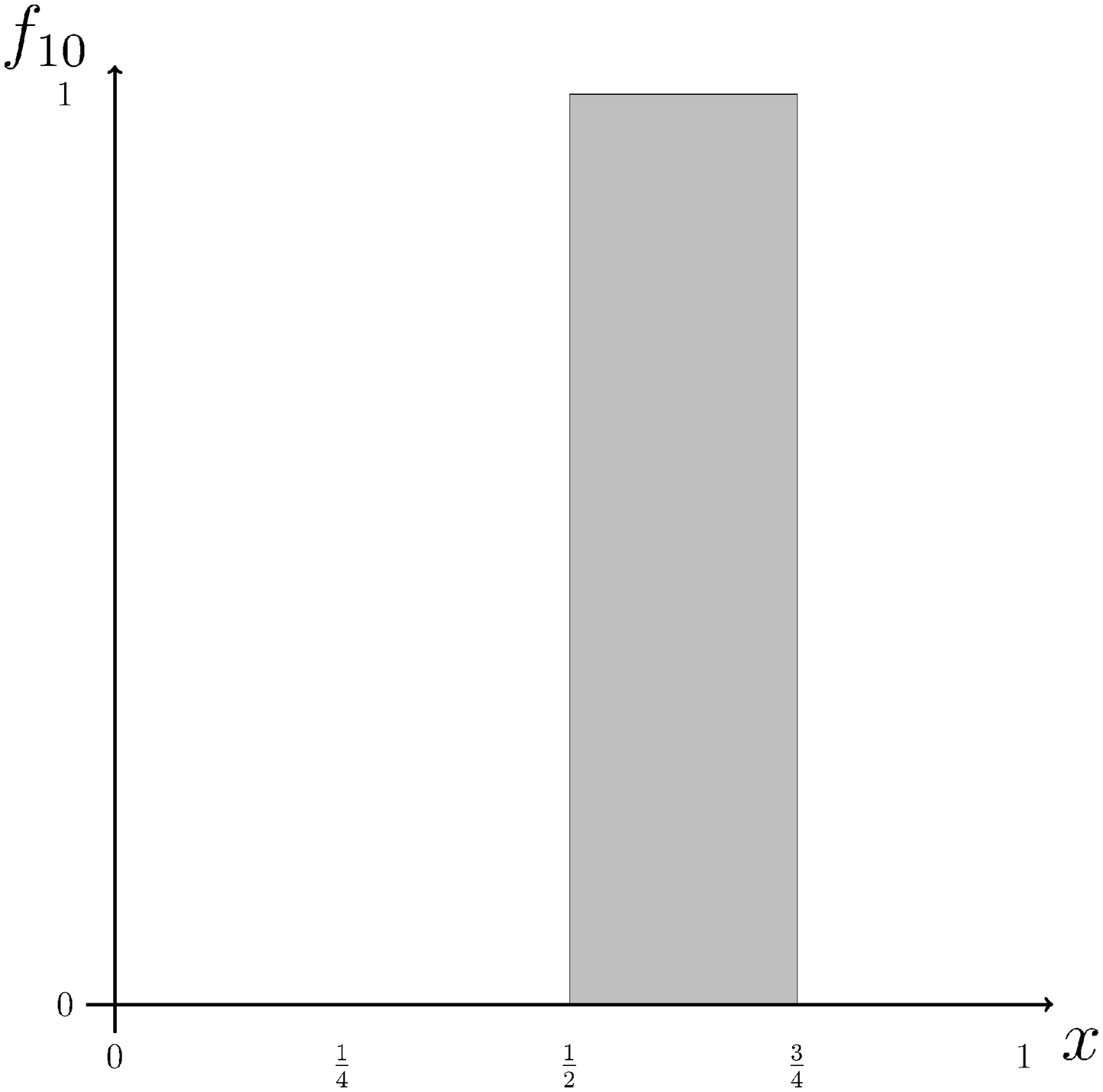}
\end{minipage}
\begin{minipage}{0.32\textwidth}
\includegraphics[width=\textwidth]{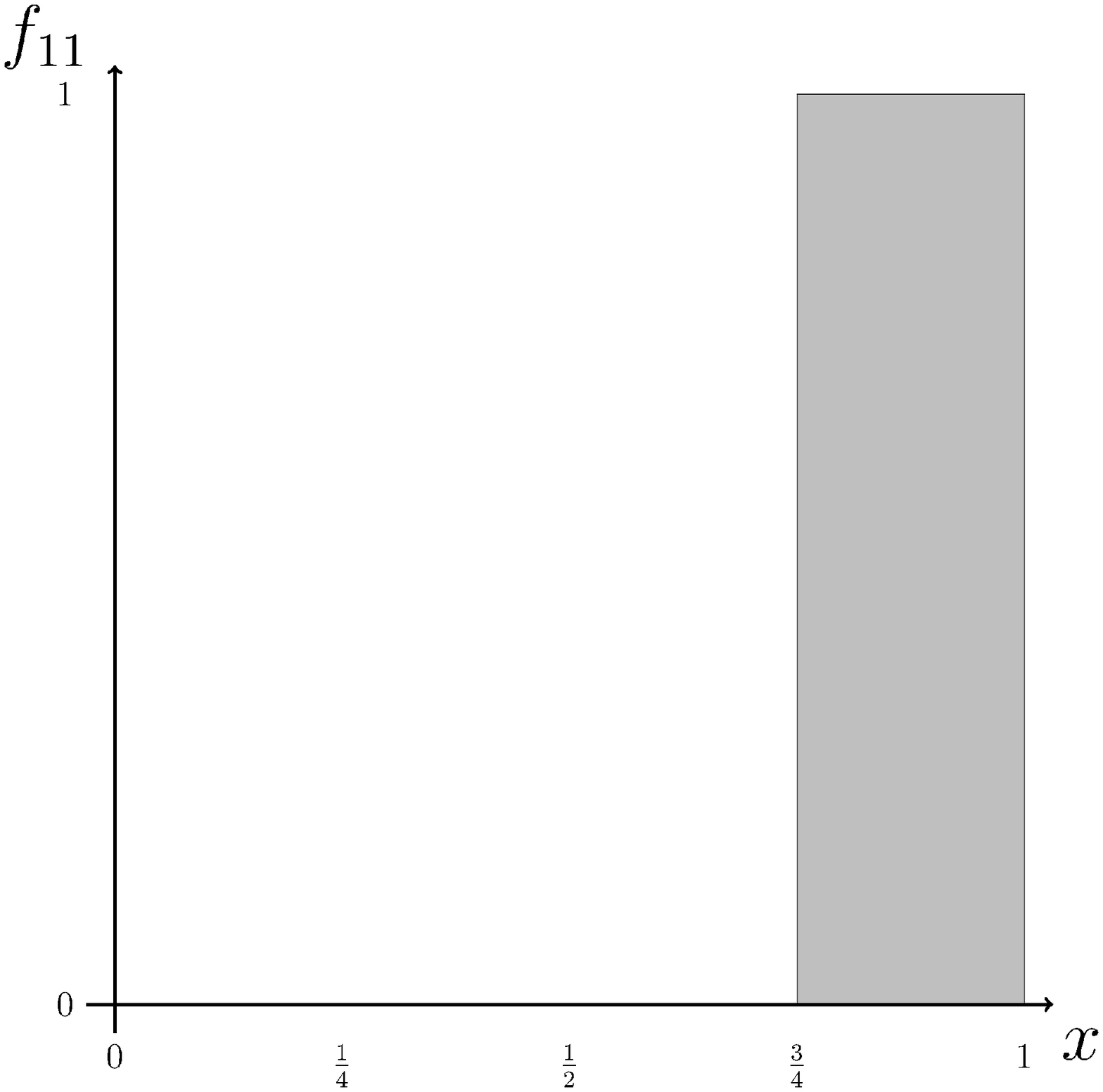}
\end{minipage}
\caption{Six elements of a sequence of functions which doesn't converge almost surely to $0$ but has no other limit points.}
\label{asconvergenceisnottopological}
\end{minipage}
\end{figure}

\subsection{Smooth vs Gromov-Hausdorff convergence}

For the readers convenience we present a proof of the fact that smooth convergence is stronger than $\gromovspace$-convergence.  The key ideas are contained in the proof of part 2 of \cite[Theorem 7.3.25]{burago-burago-ivanov2001} and the indications given in Section 7.4.1 of the same reference.

\begin{lemma}
If a sequence $(M_n,o_n,g_{M_n})$ converges smoothly to $(M,o,g)$ then it also $\gromovspace$-converges to the same limit.
\end{lemma}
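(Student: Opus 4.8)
\section*{Proof plan for the final Lemma}

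The plan is to show directly that $\gromovdistance(M_n,M)\to 0$ by controlling, for each fixed $k\in\N$, the term $d_k(M_n,M)$ in the defining series. Since $\sum_{k\ge 1}2^{-k}<\infty$, it suffices to prove that $d_k(M_n,M)\to 0$ as $n\to\infty$ for every fixed $k$: given $\epsilon>0$ one first picks $N$ with $\sum_{k>N}2^{-k}<\epsilon/2$ and then $n$ large enough that each of the finitely many terms $2^{-k}\min(1,d_k(M_n,M))$ with $k\le N$ is $<\epsilon/(2N)$. So fix $k$, and using the definition of smooth convergence fix pointed smooth embeddings $f_n\colon B_R(o)\to M_n$ with $R=10k$ and with $f_n^*g_{M_n}\to g$ smoothly, in particular in $C^0$, on compact subsets of $B_R(o)$. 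Fix a small $\delta>0$; then for all large $n$ one has $(1-\delta)\,g\le f_n^*g_{M_n}\le (1+\delta)\,g$ on the compact set $\overline{B_{9k}}(o)$ (here I use that complete Riemannian manifolds are proper, so closed balls are compact).

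Next I would show that for $n$ large each $f_n$ is an almost-isometry between $\overline{B_k}(o)$ and a ball around $o_n$ differing from $\overline{B_k}(o_n)$ by at most $\delta k$. The bound $d_{M_n}(f_nx,f_nx')\le\sqrt{1+\delta}\,d_M(x,x')$ for $x,x'\in\overline{B_k}(o)$ is immediate: push a minimizing $M$-geodesic (which stays in $\overline{B_{2k}}(o)\subset B_R(o)$) forward by $f_n$ and bound its $g_{M_n}$-length with the upper metric bound. The reverse bound $d_{M_n}(f_nx,f_nx')\ge\sqrt{1-\delta}\,d_M(x,x')$ is the crux, and it rests on the claim that a minimizing $M_n$-geodesic $\sigma$ joining two points $f_n(x),f_n(x')$ with $x,x'\in\overline{B_{2k}}(o)$ and of $M_n$-length at most $6k$ stays inside $f_n(B_{9k}(o))$; granting this, pull $\sigma$ back by $f_n^{-1}$ and use the lower metric bound to compare lengths. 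The claim itself I would prove by a supremum/connectedness argument using that (i) $f_n(B_{9k}(o))$ is open in $M_n$ (an injective immersion between equidimensional manifolds is a local diffeomorphism, hence open), (ii) $M_n$ is complete, and (iii) the lower metric bound forces the $f_n^{-1}$-image of any initial arc of $\sigma$ to have $g$-length comparable to its $M_n$-length, hence to remain well inside $\overline{B_{9k}}(o)$, so that the lifted path cannot ``run off the edge'' of the chart. The same lifting argument applied to minimizing $M_n$-geodesics issuing from $o_n$ shows $\overline{B_k}(o_n)\subseteq f_n\big(\overline{B_{(1+\delta)k}}(o)\big)$, with all such preimage points lying in $\overline{B_{(1+\delta)k}}(o)$.

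With these estimates, set $A:=\overline{B_k}(o)\subset M$ and $B:=\overline{B_k}(o_n)\subset M_n$, and consider $\psi:=f_n|_A$. It maps into $\overline{B_{(1+\delta)k}}(o_n)$, has distortion $\sup_{x,x'\in A}\big|d_{M_n}(\psi x,\psi x')-d_M(x,x')\big|\le\eta$ with $\eta=\eta(\delta,k)\to 0$ as $\delta\to 0$, and has $\eta$-dense image in $B$ (using the surjectivity statement together with the fact that $M$ and $M_n$ are geodesic spaces, so concentric closed balls of radii differing by $\delta k$ are $\delta k$-Hausdorff close). From such a $\psi$ one builds a metric on $A\sqcup B$ restricting to the given metrics and with $d(o,o_n)+d_H(A,B)\le C\eta$ via the standard recipe $d(x,y):=\inf_{x'\in A}\big(d_M(x,x')+\tfrac{\eta}{2}+d_{M_n}(\psi x',y)\big)$ for $x\in A$, $y\in B$; the slack term $\tfrac{\eta}{2}$ is exactly what makes the triangle inequalities hold, and checking this is routine given the distortion bound. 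Hence $d_k(M_n,M)\le C\eta$; letting $n\to\infty$ and then $\delta\to 0$ yields $d_k(M_n,M)\to 0$, and combining over $k$ finishes the proof.

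I expect the only genuine obstacle to be the geodesic-confinement claim in the second paragraph: without it a minimizing $M_n$-geodesic could in principle exit the embedded chart through the portion of $M_n$ not parametrized by $f_n$, which would destroy both the lower distance bound and the surjectivity statement. Everything else — the reduction to finitely many terms, the length computations, the Hausdorff comparison of nearby concentric balls, and the assembly of the bridging metric — is standard length-space bookkeeping.
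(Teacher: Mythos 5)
Your proposal is correct and follows essentially the same route as the paper: embeddings from smooth convergence, a two-sided length comparison giving an additive distortion bound on closed balls, the confinement of minimizing $M_n$-geodesics inside the chart (which the paper also needs but dispatches in one line, whereas you rightly flag it as the crux and sketch the same exit-length argument), and the standard Burago--Burago--Ivanov bridging metric on the disjoint union. No gaps.
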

\begin{proof}
We must show that for each $r \g 0$ the sequence of pointed compact metric spaces $\overline{B_r}(o_n)$ (where the metric is inherited from $M_n$) converges in the Gromov-Hausdorff sense to $\overline{B_r}(o)$.

By smooth convergence (see Section \ref{topology}) given $r \g 0$ there exists a sequence of smooth pointed embeddings $f_n:B_{3r}(o) \to M_n$ with the property that the pullback metrics $g_n = f_n^*g_{M_n}$ satisfy
\[a_n = \sup\lbrace |g_n(x) - g(x)|_g: x \in B_{3r}(o)\rbrace \to 0\]
when $n \to +\infty$.

Notice that whenever $a_n = 0$ one has that $\overline{B_r}(o)$ is isometric to $\overline{B_r}(o_n)$ via $f_n$ so that there is nothing to prove.  Hence we may assume without loss of generality in what follows that $a_n \neq 0$.  Also, since we are only interested in behavior when $n \to +\infty$ we may assume  that $a_n \l 1$.

Let $d$ be the Riemannian distance of $M$ and $d_n$ be the pullback under $f_n$ of the Riemannian distance on $M_n$.  Since the shortest curve between $f_n(x)$ and $f_n(y)$ might in principle exit $f_n(B_{3r}(o))$ it isn't necessarily true that $d_n$ equals the distance on $B_{3r}(o)$ induced by the metric $g_n$.

However notice that if $v$ is a tangent vector in $B_{3r}(o)$ of unit norm for $g$ then 
\[|g_n(v,v) - 1| \le a_n\]
so that the $g_n$ norm of $v$ is between $(1-a_n)^{1/2}$ and $(1+a_n)^{1/2}$.   This implies that the $g_n$-length of any curve in $B_{3r}(o)$ is within a multiplicative factor $b_n^{\pm 1}$ of its $g$-length where $b_n = \max\left( (1-a_n)^{-1/2},(1+a_n)^{1/2}\right)$.  In particular, for $n$ large enough, the Riemannian distance induced by $g_n$ on $B_{3r}(o)$ coincides with $d_n$ when restricted to $\overline{B_{r}}(o)$.

The previous comparison of lengths of curves also implies for $n$ large that
\[|d_n(x,y) - d(x,y)| \le (b_n-1)d(x,y) \le 2r(b_n - 1)\]
for all $x,y \in \overline{B_r}(o)$ (the first inequality relies on the fact that $1-b_n^{-1} \le b_n - 1$ which is true since $b_n \ge 1$).

Following the proof of part 2 of \cite[Theorem 7.3.25]{burago-burago-ivanov2001} we consider for each $n$ the distance $\tilde{d}_n$ on the disjoint union $\overline{B_r}(o) \sqcup \overline{B_r}(o)$ which coincides with $d$ on the left-hand copy, with $d_n$ on the right-hand copy and for $x,z$ in different copies is defined by
\[\tilde{d}_n(x,z) = \inf\left\lbrace d(x,y) + 2r(b_n-1) + d_n(y,z): y \in \overline{B_r}(o)\right\rbrace.\]

The Hausdorff distance between the two copies of $\overline{B_r}(o)$ with the above defined distance is less than $3r(b_n - 1)$ and therefore goes to $0$ when $n \to +\infty$.  This shows that the Gromov-Hausdorff distance between $\overline{B_r}(o)$ and $f_n(\overline{B_r}(o))$ (the later inheriting its metric from $M_n$) converges to $0$ when $n \to +\infty$.

To conclude it suffices to establish that the Hausdorff distance (with respect to the Riemannian distance on $M_n$) between $f_n(\overline{B_r}(o))$ and $\overline{B_r}(o_n)$ goes to $0$ when $n \to +\infty$.  This follows from our comparison of $d$ and $d_n$ since $f_n(\overline{B_r}(o))$ contains the ball of radius $b_n^{-1}r$ and is contained in the ball of radius $b_n r$ centered at $o_n$.
\end{proof}

\section{Smooth precompactness}\label{precompactness}

In this section we prove that sets of manifolds with uniformly bounded geometry are precompact with respect to smooth convergence. This was used in the proof of Theorem \ref{compactnessofmanifoldspaces}.  

We recall (see \cite[Chapter 10]{petersen2006} and Section \ref{topology}) that, in similar fashion to the definition of smooth convergence, a sequence of complete Riemannian manifolds $(M_n,o_n,g_n)$ is said to converge $C^k$ to $(M,o,g)$ if for each $r \g 0$ there exists a sequence of smooth pointed embeddings $f_n:B_r(o) \to M_n$ (defined for large enough $n$) such that the pullback metrics $f_n^*g_n$ converge $C^k$  to $g$ on compact subsets of $B_r(o)$.

\begin{lemma}\label{precompactnesslemma}
All subsets of $\gromovspace$ of the form $\manifolds = \manifolds\left(d,r,\lbrace C_k\rbrace\right)$ are sequentially precompact with respect to smooth convergence.
\end{lemma}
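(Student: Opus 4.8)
The plan is to reduce this to the classical Cheeger--Gromov compactness theorem, which states that a family of pointed complete Riemannian manifolds of fixed dimension $d$ with a uniform lower bound on injectivity radius and uniform two-sided bounds on the curvature tensor is precompact in the pointed $C^{1,\alpha}$ (or $C^\infty$ if one has bounds on all derivatives of curvature) topology. So the first step is simply to observe that every $M \in \manifolds(d,r,\{C_k\})$ satisfies exactly these hypotheses: the injectivity radius is everywhere at least $r$, and the bound $|\nabla^k R| \le C_k$ for every $k \ge 0$ gives in particular $|R| \le C_0$, so the sectional curvatures are bounded in absolute value by a constant depending only on $C_0$ and $d$.

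Second, I would recall the mechanism behind the compactness theorem in a form adapted to our definition of smooth convergence: around the basepoint $o_M$ of any $M \in \manifolds$ one has, thanks to the injectivity radius bound and the curvature bounds, harmonic coordinate charts (or exponential charts, with the curvature-derivative bounds controlling all derivatives of the metric) on balls of a definite radius $\rho = \rho(d,r,C_0) > 0$, in which the metric components are uniformly bounded in every $C^k$ norm by constants depending only on $d$, $r$ and the $C_j$ with $j \le k$. Fixing $r' \g 0$, one covers $\overline{B_{r'}}(o_M)$ by a controlled number of such charts, and a diagonal Arzelà--Ascoli argument over a countable exhaustion by such balls produces, for any sequence $M_n$ in $\manifolds$, a subsequence along which the metrics converge in $C^\infty$ on compact sets. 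The standard way to package this is to build a limiting manifold $(M_\infty, o_\infty, g_\infty)$ as a direct limit of the charts and to exhibit the pointed embeddings $f_n : B_{r'}(o_\infty) \to M_n$ required by the definition of smooth convergence in Section \ref{topology}; their existence is exactly what the chart-by-chart $C^\infty$ convergence provides, after passing to a further subsequence to make the transition maps converge. Since this is carried out for an increasing sequence $r' \to +\infty$, a final diagonal extraction yields a single subsequence of $M_n$ converging smoothly to $M_\infty$ in the sense of Section \ref{topology}.

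The main obstacle, and the only genuinely substantive point, is the construction of the good coordinate charts of definite size with uniform $C^k$ control on the metric — i.e.\ the statement that an injectivity radius lower bound together with $|\nabla^k R| \le C_k$ forces the metric, in suitable charts, to be uniformly $C^\infty$-bounded on balls of a uniform radius. Everything after that (the covering argument, the Arzelà--Ascoli diagonalization, the assembly of the limit and of the embeddings $f_n$, and the passage $r' \to +\infty$) is routine bookkeeping. I would either cite the standard references for this local regularity input (e.g.\ the harmonic-coordinate estimates and the Cheeger--Gromov--Taylor injectivity radius estimate, as presented in \cite[Chapter 10]{petersen2006} and \cite{anderson1990}) or devote the bulk of the section to proving it directly from the exponential map and Jacobi field estimates, which is the self-contained route suggested by the paper's stated intention to build its own tools; in the plan I would take the latter, proving first that $\exp_{o_M}$ is a diffeomorphism on $B_{\min(r,\pi/\sqrt{C_0})}(0)$ with the pulled-back metric controlled in $C^k$ by integrating the Jacobi equation and differentiating it $k$ times against the bounds $|\nabla^j R| \le C_j$.
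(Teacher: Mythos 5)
Your plan is correct and follows essentially the same route as the paper: both arguments rest on showing that the injectivity radius bound and the bounds $|\nabla^k R|\le C_k$ force uniform $C^k$ control of the metric coefficients in normal (exponential) coordinates on balls of a definite radius, and then feed this into a compactness machine. The paper quotes Eichhorn \cite{eichhorn1991} for the normal-coordinate estimates (Lemma \ref{eichhorn}) --- essentially the Jacobi-field computation you propose to carry out --- and then, rather than redoing the Cheeger--Gromov covering/diagonalization/direct-limit construction by hand, verifies the hypotheses of Petersen's packaged compactness theorem (\cite[Theorem 72]{petersen2006}, restated as Theorem \ref{petersen}), which absorbs the bookkeeping you describe in your second paragraph.

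The one point your plan underestimates is the control of the transition maps between charts. To make ``the transition maps converge'' along a subsequence, or to satisfy condition 4 of Petersen's norm (Section \ref{petersennorms}), you need uniform bounds on the partial derivatives of all orders of the transition maps, and these do not follow formally from the $C^k$ bounds on the metric coefficients in each chart separately. The paper devotes Lemma \ref{higherordercheeger} to exactly this, arguing via covariant derivatives of straight lines (which are invariant under the transition maps) together with a polarization identity for symmetric multilinear forms, and it points out that the estimate genuinely requires shrinking the chart radius to some $r'<r$: normal coordinates based at antipodal points of the round sphere have unbounded first-derivative transition maps near the boundary of the maximal ball. This is a substantive step of the same order as the normal-coordinate estimates, not routine bookkeeping, and a complete write-up of your plan would have to supply it.
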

\begin{proof}
For each $M \in \manifolds$ we consider the atlas $\mathcal{A}$ by normal coordinates on the balls of radius $r'$ given by Lemma \ref{higherordercheeger} below.  

A theorem of Eichhorn (see Lemma \ref{eichhorn} below) shows that there exists a sequence $\normal^k$ such that all the metrics on $B_{r'}$ obtained from such coordinates have coefficients which satisfy
\[|\partial_{i_1}\cdots\partial_{i_k}g_{ij}| \le \normal^k\]
for all choices of indices $i_1,\ldots,i_k$.

Furthermore we establish in Lemma \ref{higherordercheeger} that there is a sequence $\transition^k$ bounding the $k$-th order partial derivatives of the transition maps of any such atlas $\mathcal{A}$ and that the Euclidean and Riemannian norms on $B_{r'}$ differ at most by a multiplicative factor of $2^{\pm 1/2}$.

This shows that for each $k$ there exists $Q$ such that all manifold in $\manifolds$ have $C^k$ norm less than or equal to $Q$ on a scale of $r$ in the sense of Petersen (see the definition in subsection \ref{petersennorms} below).

Applying Petersen's compactness theorem (see Theorem \ref{petersen} below) one obtains that $\manifolds$ is $C^{k}$ precompact for all $k$, and hence smoothly precompact as claimed.
\end{proof}

\subsection{Norms and sequential compactness}\label{petersennorms}

Following \cite[Chapter 10.3.1]{petersen2006} (taking, for simplicity, $\alpha =1$ in his notation) we say that a manifold $M$ has $C^k$-norm less than or equal to $Q$ on a scale of $r$ if there exists an atlas $\mathcal{A}$ of $M$ which satisfies the following properties:
\begin{enumerate}
 \item Every ball of radius $e^{-Q}r/10$ is contained in the domain of some chart in $\mathcal{A}$.
 \item For each chart $\varphi \in \mathcal{A}$ one has $|D\varphi| \le e^Q$ and $|D\varphi^{-1}| \le e^Q$, where $D\varphi$ is the tangent map to the chart and one uses the operator norm between the tangent space of $M$ with the Riemannian metric and Euclidean space with the usual Euclidean metric.
 \item For each chart $\varphi \in \mathcal{A}$ and each $0 \le i \le k$ the partial derivatives of order $i$ of the coefficients of $\varphi_*g_M$ are $Q/(r^{i+1})$-Lipschitz.
 \item For each $\varphi_1,\varphi_2 \in \mathcal{A}$ the $C^{k+2}$-norm (i.e. sum of suprema of absolute values of all partial derivatives up to order $k+2$) of the transition map $\varphi_2\circ\varphi_1^{-1}$ is less than or equal to $(10+r)e^Q$.
\end{enumerate}

We now restate Petersen's \cite[Theorem 72]{petersen2006} as we will use it.
\begin{theorem}[Petersen]\label{petersen}
For any positive constants $r$ and $Q$ the class of pointed, complete, $d$-dimensional Riemannian manifolds with $C^k$-norm less than or equal to $Q$ on a scale of $r$ is sequentially compact with respect to $C^{k}$ convergence.
\end{theorem}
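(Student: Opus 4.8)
The plan is to run the standard Cheeger--Gromov construction of a limit manifold out of the uniformly controlled coordinate charts that the definition of the $C^k$-norm on a scale of $r$ provides. Fix a sequence $(M_n,o_n,g_n)$ in the class, set $\rho = e^{-Q}r/10$, and let $\mathcal{A}_n$ be an atlas witnessing that $M_n$ has $C^k$-norm $\le Q$ on a scale of $r$. The four defining conditions yield, with constants independent of $n$: every $\rho$-ball is contained in the domain of some chart; charts and their inverses are $e^Q$-bi-Lipschitz between the Riemannian and Euclidean metrics; the coefficients of $\varphi_*g_n$ together with their derivatives up to order $k$ are bounded and their $k$-th derivatives are Lipschitz, so these coefficients are uniformly bounded in $C^{k,1}$ on each coordinate ball; and transition maps are uniformly bounded in $C^{k+2}$. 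The bi-Lipschitz comparison also gives uniform volume doubling, hence for each $R>0$ a uniform bound $N(R)$ on the number of $\tfrac{\rho}{2}$-separated points inside $B(o_n,R)$.

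For each fixed $R$ I would choose a maximal $\tfrac{\rho}{2}$-separated net $\{p_{n,i}\}_{i\le N(R)}$ in $B(o_n,R)$ (padding the index set to a constant size) with $p_{n,1}=o_n$, and attach to $p_{n,i}$ a chart $\varphi_{n,i}$ whose domain contains $B(p_{n,i},\rho)$. Passing to a subsequence, I would arrange that the incidence pattern recording which of the sets $\varphi_{n,i}(B(p_{n,i},\rho))$ meet which is independent of $n$; that the transition maps $\varphi_{n,j}\circ\varphi_{n,i}^{-1}$ converge in $C^{k+1}$ (Arzel\`a--Ascoli, using the $C^{k+2}$ bound) on the relevant Euclidean domains to limits $\psi_{ij}$; and that the coefficients $(\varphi_{n,i})_*g_n$ converge in $C^k$ (Arzel\`a--Ascoli again, using the $C^{k,1}$ bound, which gives $C^{k,\beta}$-subconvergence for every $\beta<1$) to limit coefficients $\gamma_i$ on $B^d(0,\rho)$. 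A diagonal argument over an exhaustion $R\to+\infty$ makes all of this simultaneously consistent.

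Next I would glue: set $M = \bigl(\bigsqcup_i B^d(0,\rho)\bigr)/\!\sim$ using the $\psi_{ij}$ as transition maps. The cocycle identity $\psi_{jk}\circ\psi_{ij}=\psi_{ik}$ and the overlap/Hausdorff conditions are closed under $C^{k+1}$ convergence, so they survive from the $M_n$; this exhibits $M$ as a $C^{k+1}$ manifold with basepoint $o$ the origin of the first chart, and the $\gamma_i$ patch consistently via the $\psi_{ij}$ (again a closed condition) to a $C^k$ Riemannian metric $g$. The uniform chart bounds pass to the limit, since they are non-strict inequalities, so $(M,o,g)$ again has $C^k$-norm $\le Q$ on a scale of $r$ and, checking that closed balls are compact, is complete by Hopf--Rinow. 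To witness convergence, for each $r'<R$ I would patch the maps $\varphi_{n,i}^{-1}$ (read as the identity in coordinates) using a fixed partition of unity subordinate to the limiting cover, obtaining pointed maps $f_n\colon B(o,r')\to M_n$; these are $C^1$-small perturbations of local isometries that are injective by the separation control, hence embeddings for $n$ large, and $f_n^*g_n\to g$ in $C^k$ on compact sets because in each chart this pullback is, up to the converging transition maps, just $(\varphi_{n,i})_*g_n\to\gamma_i$. Diagonalizing over $r'\to+\infty$ gives pointed $C^k$ subconvergence of the original sequence.

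The main obstacle is organizational: one must set up the diagonal extraction so that the combinatorial incidence data of the chart covers genuinely stabilizes along a single subsequence across all scales, so that the glued quotient is Hausdorff with a well-defined limiting metric (i.e. the cocycle and tensor-compatibility relations really do pass to the limit), and so that the patched comparison maps $f_n$ are eventually injective; the uniform volume-doubling bound and the $C^{k+2}$ control on transitions are what make each of these go through. The one genuinely delicate analytic point is obtaining convergence of the metric coefficients in $C^k$ rather than merely in $C^{k-1}$, which is precisely why the hypothesis controls the $C^{k,1}$ norm: equi-Lipschitz $k$-th derivatives yield, via Arzel\`a--Ascoli, subconvergence in $C^{k,\beta}$ for every $\beta<1$, hence in $C^k$.
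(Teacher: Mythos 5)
The paper does not actually prove this statement: it is quoted directly from Petersen's book (Theorem 72 of \cite{petersen2006}), so there is no in-paper argument to compare against. Your sketch is a correct outline of the standard Cheeger--Gromov convergence argument, which is essentially Petersen's own proof of that theorem --- uniformly controlled charts, stabilization of the incidence combinatorics, Arzel\`a--Ascoli on transition maps ($C^{k+2}\Rightarrow C^{k+1}$ subconvergence) and on metric coefficients ($C^{k,1}\Rightarrow C^{k,\beta}$ subconvergence), gluing, and patched comparison maps; the only step stated too casually is the partition-of-unity patching of the $f_n$, since one cannot take convex combinations of points of $M_n$ directly and must instead average in the Euclidean coordinates of a fixed chart of $M_n$ (or use a center-of-mass construction), exactly as Petersen does.
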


\subsection{Normal coordinates}

We recall that a normal parametrization of a manifold $M \in \manifolds\left(d,r,\lbrace C_k\rbrace\right)$ at a point $p$ is a function $\psi: \R^{d} \to M$ satisfying
\[\psi(x) = \exp_{\psi(0)}\circ f(x)\]
where $\exp: T_{\psi(0)}M \to M$ is the Riemannian exponential map and $f:\R^{d} \to T_{\psi(0)}M$ is a linear isometry between $\R^{d}$ and the tangent space $T_{\psi(0)}M$ at $\psi(0)$.

If $M \in \manifolds\left(d,r,\lbrace C_k\rbrace\right)$ then any normal parametrization $\psi$ is a diffeomorphism when restricted to the ball $B_{r}$ of radius $r$ centered at $0 \in \R^{d}$.  Hence the pullback $g = \psi^*g_M$ of the Riemannian metric of $M$ to $B_{r}$ is also a Riemannian metric (i.e. non-degenerate).

We recall that the coefficients of a metric $g$ defined on some open subset of $\R^{d}$ are the functions
\[x \mapsto g(x)(e_i,e_j) = g_{ij}(x)\]
where $e_1,\ldots,e_{d}$ is the canonical basis of $\R^{d}$.

The coefficients obtained in this manner from manifolds in $\manifolds\left(d,r,\lbrace C_k\rbrace\right)$ are uniformly $C^k$ bounded as is shown by the following lemma (see \cite[Corollary 2.6]{eichhorn1991}).

\begin{lemma}[Eichhorn]\label{eichhorn}
Given $\manifolds = \manifolds\left(d,r,\lbrace C_k\rbrace\right)$ for each $k \ge 0$ there exists a constant $\normal^k$ such that if $g = \psi^*g_M$ is a metric on $B_{r}$ obtained by pulling back the metric of some manifold $M \in \manifolds$ via a normal parametrization $\psi$ then one has:
\[|\partial_{i_1}\cdots\partial_{i_{k}}g_{i j}| \le \normal^k\]
for all indices $i,j,i_1,\ldots,i_k$.
\end{lemma}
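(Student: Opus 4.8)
The plan is to reduce the estimate to a hierarchy of linear ordinary differential equations along radial geodesics and to close the hierarchy by Gr\"onwall estimates using only the data $d$, $r$ and $\lbrace C_k\rbrace$. Fix $M \in \manifolds$ and a normal parametrization $\psi = \exp_{\psi(0)}\circ f$ with $p = \psi(0)$. Because the injectivity radius of $M$ at $p$ is at least $r$, the map $\psi$ is a diffeomorphism of $B_r$ onto its image and, for each $x \in B_r \setminus \lbrace 0\rbrace$, the geodesic $\gamma_x(s) = \exp_p(sf(x))$ has no conjugate points on $[0,1]$. Writing $\mathcal J_{x,i}$ for the Jacobi field along $\gamma_x$ with $\mathcal J_{x,i}(0) = 0$ and $\nabla_s \mathcal J_{x,i}(0) = f(e_i)$, the standard description of $d\exp_p$ gives $g_{ij}(x) = \langle \mathcal J_{x,i}(1), \mathcal J_{x,j}(1)\rangle$, so that $g$ is entirely built from Jacobi fields, which solve $\nabla_s^2 \mathcal J + R(\mathcal J, \gamma_x')\gamma_x' = 0$. (Equivalently, along each unit-speed radial geodesic one may record the shape operator $S_t$ of the geodesic sphere of radius $t$, solving the Riccati equation $\nabla_t S_t + S_t^2 + R_t = 0$, together with $\nabla_t h_t = 2h_t S_t$ for the tangential part $h_t$ of $g$.)

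First I would prove the base case: uniform two-sided bounds $c\,\mathrm{Id} \le (g_{ij}(x)) \le c^{-1}\mathrm{Id}$ on $B_r$, with $c = c(d,r,C_0) > 0$. The upper bound comes from Jacobi (equivalently Riccati) comparison against the constant-curvature models using $|R| \le C_0$, as in the proof of the Rauch comparison theorem. The lower bound is where the injectivity radius hypothesis enters: absence of conjugate points on $[0,1]$ keeps the fields $\mathcal J_{x,i}(1)$ linearly independent, and quantifying this using $|R| \le C_0$ produces the claimed $c$; since $g$ is a genuine smooth metric on all of $B_r$ in these (Cartesian) normal coordinates there is no difficulty at $x = 0$.

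Then comes an induction on the order $k$ of differentiation. Differentiating the Jacobi equation once in a coordinate direction yields an inhomogeneous linear ODE for $\partial \mathcal J$ whose coefficients involve $R$ and whose forcing term involves $\nabla R$ and $\mathcal J$ itself; the initial data vanish, so Gr\"onwall over $[0,1]$ bounds $\partial \mathcal J$ in terms of $d, r, C_0, C_1$ and the base case. Iterating, the $k$-th coordinate derivatives of the $\mathcal J_{x,i}$, hence of $g_{ij}$, solve an inhomogeneous linear ODE whose coefficients and forcing terms are universal polynomial expressions in the components of $R, \nabla R, \ldots, \nabla^{k-2}R$ (bounded by $C_0,\ldots,C_{k-2}$), in $g$ and $g^{-1}$, and in the already-bounded derivatives of $g$ of order $< k$; a final Gr\"onwall estimate delivers the constant $\normal^k$. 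Every constant produced depends only on $d$, $r$ and $\lbrace C_k\rbrace$, never on $M$, $p$, or the chosen direction, so the bound is uniform over $\manifolds$.

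The main obstacle is the bookkeeping that makes this hierarchy close: one must verify that after $k$ differentiations the system is still \emph{linear}, with coefficients and forcing terms controlled purely by lower-order data together with $\nabla^{\le k-2}R$ --- which is precisely what forces the stated dependence on $C_0,\ldots,C_{k-2}$. A secondary technical point is the coordinate singularity one encounters if one runs the argument in polar rather than Cartesian normal coordinates (the Riccati and tangential equations degenerate as $t \to 0$); carrying out the Jacobi-field analysis throughout in the Cartesian chart, where $\mathcal J_{x,i}$ depends smoothly on $x$ up to and including $x = 0$, avoids it. The details are carried out in \cite[Corollary 2.6]{eichhorn1991}.
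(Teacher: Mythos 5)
The paper does not actually prove this lemma; it is imported verbatim from \cite[Corollary 2.6]{eichhorn1991}, so there is no internal argument to measure your proposal against. Your sketch is the standard route to such estimates and is essentially the mechanism behind the cited result: write $g_{ij}(x)=\langle \mathcal J_{x,i}(1),\mathcal J_{x,j}(1)\rangle$ via Jacobi fields along radial geodesics, differentiate the Jacobi equation repeatedly in the base point, and close the resulting linear hierarchy with Gr\"onwall. That strategy is sound, and your remark about working in the Cartesian rather than polar chart to avoid the degeneracy at the origin is the right instinct.

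Two caveats. First, the one genuinely delicate step is the uniform lower bound $c\,\mathrm{Id}\le (g_{ij})$ on \emph{all} of $B_r$, which your induction needs because you let $g^{-1}$ enter the coefficients of the hierarchy. When $r>\pi/\sqrt{C_0}$ the Rauch comparison says nothing beyond the model conjugate radius, so ``no conjugate points on $[0,1]$'' must be converted into a quantitative lower bound on $|\mathcal J_{x,i}(1)|$ uniform over $\manifolds$; this is exactly where the injectivity radius hypothesis does real work and is the part of Eichhorn's argument that cannot be waved at. It can be sidestepped for the stated (purely upper) bounds by carrying out the whole computation in a parallel orthonormal frame along each radial geodesic, where the inner product is fixed and $g^{-1}$ never appears; note also that the paper itself only ever uses nondegeneracy of $g$ on a smaller ball $B_{r'}$, which it derives separately in Lemma \ref{christoffelbounds} from $g_{ij}(0)=\delta_{ij}$ and the first-derivative bound $\normal^1$. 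Second, your bookkeeping ``$\nabla^{\le k-2}R$'' is off: already $\partial_x g$ requires a bound on $\nabla R$, since differentiating $R\circ\gamma_x$ in $x$ produces $\nabla R$ contracted with a Jacobi field, so the honest dependence for $k$-th derivatives is on $C_0,\dots,C_k$. This is harmless for the statement, which assumes the entire sequence $\lbrace C_k\rbrace$ is given, but the claimed gain of two derivatives is a feature of harmonic, not normal, coordinates.
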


\subsection{Transition maps}

This subsection is devoted to establishing the following uniform estimate for the derivatives of transition maps between normal coordinates.

\begin{lemma}\label{higherordercheeger}
Given $\manifolds = \manifolds\left(d,r,\lbrace C_k\rbrace\right)$ there exists $r' \l r$ and for each $k \ge 0$ a constant $\transition^k$ such that the $k$-th order partial derivatives of any transition map between normal coordinates on balls of radius $r'$ in any manifold $M \in \manifolds$ are bounded in absolute value by $\transition^k$.
\end{lemma}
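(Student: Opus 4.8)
The plan is to express a transition map between two normal coordinate charts as a composition $\psi_2^{-1} \circ \psi_1$ and to control its derivatives by combining three ingredients: (i) the uniform $C^k$ bounds on the pulled-back metrics $g = \psi^*g_M$ coming from Eichhorn's Lemma \ref{eichhorn}; (ii) standard smooth dependence of the Riemannian exponential map on initial conditions, which lets one bound derivatives of each $\psi_i$ and of $\psi_i^{-1}$ in terms of the $g_{ij}$ and their derivatives (the Christoffel symbols and their derivatives enter the geodesic ODE, and these are polynomial expressions in the $g_{ij}$, $\partial g_{ij}$ and the inverse matrix $g^{ij}$, which is controlled once we know $g$ is uniformly close to the Euclidean metric on a small enough ball); and (iii) the chain rule (Fa\`a di Bruno), which turns bounds on the derivatives of $\psi_1$ and $\psi_2^{-1}$ into bounds on the derivatives of the composition. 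First I would fix the radius $r'$: choose $r' < r$ small enough that, by Eichhorn's $C^0$ bound together with the fact that $g_{ij}(0) = \delta_{ij}$ (normal coordinates), the metric $g$ on $B_{r'}$ satisfies $\tfrac12 I \le (g_{ij}) \le 2I$, so that the Euclidean and Riemannian norms differ by at most $2^{\pm 1/2}$ and $g^{ij}$ is uniformly bounded. This last assertion is one of the two things the lemma statement promises, and it is immediate from this choice.

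Next I would set up the estimate on a single normal parametrization. Writing $\psi(x) = \exp_{\psi(0)}(f(x))$ with $f$ a linear isometry, the image point $\psi(tv)$ for $|v|=1$ is the value at time $t$ of the geodesic with initial velocity $f(v)$; in normal coordinates of a \emph{second} chart this geodesic solves the system $\ddot\gamma^i + \Gamma^i_{jk}(\gamma)\dot\gamma^j\dot\gamma^k = 0$. The coefficients $\Gamma^i_{jk}$ are universal polynomials in $g^{ab}$, $g_{ab}$ and $\partial_c g_{ab}$, hence by step (i) and the choice of $r'$ they are bounded in $C^{k}$ on $B_{r'}$ by a constant depending only on $d$, $r$, $\{C_k\}$. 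Classical theorems on $C^k$ dependence of solutions of ODEs on initial conditions and parameters (e.g.\ the variational equations) then give a bound, uniform over $\manifolds$, on all partial derivatives up to order $k$ of the map $(x) \mapsto (\text{coordinates of }\psi_1(x)\text{ in the }\psi_2\text{ chart})$, provided $x$ ranges over a ball small enough that the geodesics in question stay inside $B_{r'}$ of the target chart — which is again arranged by shrinking $r'$ once more, uniformly, using the $C^0$ metric bounds to compare Riemannian and Euclidean distances. Finally, to bound $\psi_2^{-1}$ itself one uses the inverse function theorem together with the lower bound on $Dg$ (the differential of $\psi_2$ is controlled below because $g$ is uniformly non-degenerate), giving uniform $C^k$ control of $\psi_2^{-1}$ on a uniform ball; composing with $\psi_1$ and invoking Fa\`a di Bruno yields the claimed constants $\transition^k$.

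The main obstacle I anticipate is bookkeeping rather than conceptual: one must check that every ``shrink $r'$'' step can be performed \emph{once and for all}, depending only on $d$, $r$, $\{C_k\}$ and not on the particular manifold $M$ or the particular pair of charts, so that the final $r'$ is uniform over $\manifolds$. This is where the uniformity of Eichhorn's bounds and of the injectivity radius lower bound $r$ are essential — they guarantee that the domains on which normal coordinates are diffeomorphisms, and the scales on which geodesics stay trapped in a single chart, are bounded below uniformly. A secondary technical point is that the transition maps are only a priori defined on the overlap of the two coordinate balls, which may be small or awkwardly shaped; but since our derivative bounds are local (they hold at every point of the overlap with a constant independent of the point), this causes no difficulty. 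Granting the uniform ODE-dependence estimates, which are standard, the proof is essentially a composition of the above observations.
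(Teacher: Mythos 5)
Your argument is correct in outline but takes a genuinely different route from the paper's. You exploit the special structure of normal coordinates: radial lines through the origin of the first chart are geodesics, so in the second chart the transition map can be written as $\phi(x)=\exp^{g_2}_{\phi(0)}(D\phi(0)\,x)$, and you bound its derivatives by smooth dependence of the geodesic flow of $g_2$ on initial conditions (variational equations with Christoffel symbols uniformly $C^k$-bounded via Eichhorn). The paper instead avoids ODE-dependence theory altogether: it takes an \emph{arbitrary} straight segment $t\mapsto x_0+tv$ in the first chart (not a geodesic), bounds its covariant derivatives $\nabla^k_{x'}x'$ using the Christoffel bounds, observes that these covariant derivatives are carried over unchanged by the transition map because it is an isometry between the two pullback metrics, converts the covariant-derivative bounds on the image curve back into Euclidean derivative bounds, and finally passes from directional to mixed partial derivatives by the polarization estimate for symmetric multilinear forms (Waterhouse). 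The paper's route is purely pointwise --- it only ever needs the germ of a segment at a single point of the overlap --- whereas your route buys a cleaner conceptual picture (the transition map \emph{is} a reparametrized exponential map) at the cost of a genuinely global step: the whole geodesic segment from $\phi(0)$ to $\phi(x)$ must remain inside the ball where the chart-2 Christoffel bounds hold, even though only its endpoints are known to lie in the overlap. You flag this but dismiss it a little quickly; the correct fix is not merely ``shrink $r'$'' but the observation that every point $\psi_1(tv)$ of the segment satisfies $d_M(\psi_1(tv),\psi_2(0))\le (|x|-t)+r'\le 2r'$, so taking $r'<r/2$ (with the $2$-comparability of $g_2$ to the Euclidean metric arranged on $B_{2r'}$) traps the segment in the domain of the second chart uniformly over $\manifolds$. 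Finally, your closing step of bounding $\psi_2^{-1}$ separately by the inverse function theorem and then composing via Fa\`a di Bruno is redundant (and, taken literally, ill-posed, since $\psi_2^{-1}$ is a map from $M$ with no preferred coordinates on its domain): once the geodesic ODE in the second chart controls the composite $\psi_2^{-1}\circ\psi_1$ directly, nothing further is needed.
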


For partial derivatives of order one and two the above result can be compared to Lemma 3.4 and Lemma 4.3 of \cite{cheeger1970}.

The first derivative of the change of coordinates between maximal normal coordinates based at the north and south pole on the standard two dimensional sphere isn't bounded. This shows that it's indeed necessary to take $r' \l r$ in the above lemma.

Our proof proceeds in three steps.  First we bound the $k$-th order covariant derivative of any curve of the form $t \mapsto x + tv$ for any metric on the Euclidean ball of radius $r'$ in $\R^d$ obtained by pullback from a normal parametrization of a manifold in $\manifolds$.  Second, we bound the the actual (Euclidean) $k$-th order derivative of any curve whose covariant derivatives satisfy the previously obtained bounds (the point here being that covariant derivatives are invariant under the transition maps).  Finally, combining the preceding result one obtains a bound for the $k$-th derivative of any transition map along any straight line which implies the same bound is satisfied for the partial derivatives of order $k$ (this amounts to the statement that a symmetric $k$-linear function attains its maximum norm on the diagonal, see \cite{waterhouse1990} for a proof).

To begin we recall that the Christoffel symbols of a metric on an open subset of $\R^d$ with coefficients $g_{ij}$ are given by
\[\Gamma^k_{ij} = \frac{1}{2}g^{kl}\left(\partial_j g_{il} + \partial_i g_{lj} - \partial_l g_{ij}\right)\]
where $g^{ij}$ are the coefficients of the inverse of the matrix $(g_{ij})$ and summation is implied over the repeated indices of each term.

In what follows we use $B_s$ for the open Euclidean ball of radius $s$ centered at $0 \in \R^d$.

\begin{lemma}\label{christoffelbounds}
Given $\manifolds = \manifolds\left(d,r,\lbrace C_k\rbrace\right)$ there exists $r' \l r$ and for each $k \ge 0$ a constant $C_k'$ such that for any metric $g$ on $B_{r'}$ obtained by pullback from a normal parametrization of a manifold $M \in \manifolds$ one has:
\begin{enumerate}
 \item The $k$-th order partial derivatives of the metric coefficients $g_{ij}$, the coefficients of the inverse matrix $g^{ij}$, and the Christoffel symbols $\Gamma^l_{ij}$, are bounded in absolute value by $C_k'$ for all $k$.
 \item For all $v \in \R^d$ and $x \in B_{r'}$ one has $2^{-1}|v| \le |v|_{g(x)} \le 2|v|$ where $|v|$ is the Euclidean norm of $v$ and $|v|_{g(x)}$ its norm with respect to the inner product $g(x)$.
\end{enumerate}
\end{lemma}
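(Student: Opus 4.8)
The plan is to bootstrap everything from Eichhorn's Lemma~\ref{eichhorn}, which already provides, for every order $m$, a uniform constant $\normal^m$ bounding all order-$m$ partial derivatives of the metric coefficients $g_{ij}$ coming from a normal parametrization of any $M \in \manifolds$ on the ball $B_r$. Two additional ingredients are needed: a choice of $r' < r$ that is uniform over $\manifolds$ and on which the pulled-back metric is uniformly comparable to the Euclidean metric, and a bookkeeping argument converting the Eichhorn bounds into bounds for the inverse coefficients $g^{ij}$ and the Christoffel symbols $\Gamma^l_{ij}$.

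First I would fix $r'$. Since $\psi$ is a normal parametrization one has $g_{ij}(0) = \delta_{ij}$, so integrating the gradient bound that Lemma~\ref{eichhorn} gives for $m=1$ yields $|g_{ij}(x) - \delta_{ij}| \le d^{3/2}\normal^1 |x|$ for every such metric and every $x \in B_r$. Choosing $r' < r$ so small that $d^{3/2}\normal^1 r' \le 3/4$ forces the symmetric matrix $(g_{ij}(x))$ to lie within operator-norm distance $3/4$ of the identity for all $|x| < r'$; hence its eigenvalues lie in $[1/4, 4]$, which gives $\tfrac14 |v|^2 \le |v|_{g(x)}^2 \le 4|v|^2$ and so part (2), and moreover $\det(g_{ij}(x)) \ge 4^{-d}$ and the inverse matrix satisfies $\|(g^{ij}(x))\|_{\mathrm{op}} \le 4$, uniformly over $\manifolds$.

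For part (1) the bound on partial derivatives of $g_{ij}$ is Lemma~\ref{eichhorn} itself. For the inverse I would differentiate the identity $\sum_j g^{ij}g_{jk} = \delta^i_k$, which gives $\partial_l g^{im} = -\sum_{j,k} g^{ij}(\partial_l g_{jk})g^{km}$; iterating, every order-$m$ partial derivative of $g^{ij}$ is a fixed (dimension-dependent) polynomial in the entries $g^{pq}$ and in partial derivatives of the $g_{pq}$ of order at most $m$. Since on $B_{r'}$ the $g^{pq}$ are bounded by $4$ and the derivatives of the $g_{pq}$ are bounded by the $\normal^m$, induction on $m$ produces uniform bounds for the partial derivatives of $g^{ij}$ of every order. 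Finally, from $\Gamma^l_{ij} = \tfrac12 g^{lp}(\partial_j g_{ip} + \partial_i g_{pj} - \partial_p g_{ij})$ one sees that an order-$m$ partial derivative of $\Gamma^l_{ij}$ is a polynomial in partial derivatives of the $g^{pq}$ of order at most $m$ and of the $g_{pq}$ of order at most $m+1$, all of which have just been bounded; taking $C_k'$ to be the largest of the resulting bounds up to order $k$ finishes the proof.

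The one thing requiring genuine care is that $r'$, and then every $C_k'$, really is uniform over the whole family $\manifolds$; this is exactly what the uniformity of the Eichhorn constants $\normal^m$ buys, together with the elementary observations that inverting a matrix kept uniformly away from singularity, and performing algebraic operations on functions with uniform $C^k$ bounds, preserve uniform $C^k$ bounds. So I do not expect a serious obstacle here once Lemma~\ref{eichhorn} is available: the content of the lemma is essentially the uniform choice of $r'$ together with this bookkeeping.
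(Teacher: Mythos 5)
Your proposal is correct and follows essentially the same route as the paper: both use the normalization $g_{ij}(0)=\delta_{ij}$ together with the first-order Eichhorn bound to choose a uniform $r'$ on which the metric matrix stays near the identity (giving part (2)), and then obtain the bounds on $g^{ij}$ and $\Gamma^l_{ij}$ from the smoothness of matrix inversion on that compact neighborhood combined with the higher-order Eichhorn bounds. Your explicit differentiation of $g^{ij}g_{jk}=\delta^i_k$ is just a more detailed rendering of the paper's appeal to smoothness of matrix inversion.
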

\begin{proof}
Notice that for any of the coefficients $g_{ij}$ under consideration one has $(g_{ij}(0)) = (\delta_{ij})$ where the right-hand side is the $d \times d$ identity matrix.  Let $K$ be a compact neighborhood of the identity matrix such that any inner product whose matrix of coefficients (i.e. the matrix whose entry in the $i$-th row and $j$-th column is the inner product between the $i$-th and $j$-th vectors of the canonical basis of $\R^d$) is in $K$ satisfies property 2 above.

Since one has a uniform bound $\normal^1$ (given by Lemma \ref{eichhorn}) for the first order derivatives of $g_{ij}$ on $B_r$ there exits $r' \l r$ (depending only on this $\normal^1$) such that for all the metrics under consideration $(g_{ij}(x))$ belongs to $K$ for all $x \in B_{r'}$.

By Lemma \ref{eichhorn} one has uniform bounds on the partial derivatives of the metric coefficients $g_{ij}$ on $B_r$ (and in particular on $B_{r'}$). Combining this with the fact that matrix inversion is smooth on $K$ one obtains uniform bounds for the partial derivatives of all orders of the inverse matrix $(g^{ij})$ on $B_{r'}$. From this one can bound the partial derivatives of the Christoffel symbols as well.
\end{proof}

The covariant derivative of a vector field $v(t)$ over a curve $x(t)$ in $\R^d$ with respect to a metric with Christoffel symbols $\Gamma^k_{ij}$ is given by
\begin{equation}\label{covariantderivative}
\nabla_{x'}v = v' + \Gamma^k_{ij}(x^i)' v^je_k 
\end{equation}
where a superscript $i$ denotes the $i$-th coordinate and $'$ denotes derivative with respect to $t$.  We convene that $\nabla_{x'}^0v(t) = v(t)$ and define inductively $\nabla_{x'}^{k+1}v(t) = \nabla_{x'} \nabla_{x'}^kv(t)$.

\begin{lemma}\label{straightlinecovariantderivative}
Fix $\manifolds = \manifolds\left(d,r,\lbrace C_k\rbrace\right)$ and let $C_k'$ and $r'$ be given by Lemma \ref{christoffelbounds}.  There exists a sequence $C_k''$ such that for any metric $g$ on $B_{r'}$ obtained by pullback from a normal parametrization of a manifold $M \in \manifolds$ and any curve of the form
\[x(t) = x_0 + tv\]
where $x_0 \in B_{r'}$ and $|v| = 1$ one has
\[|\nabla_{x'}^kx'|_{g} \le C_k''\]
for all $k \ge 0$.
\end{lemma}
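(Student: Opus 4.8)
The plan is to proceed by induction on $k$, using the recursion formula \eqref{covariantderivative} together with the uniform bounds on the Christoffel symbols and metric coefficients supplied by Lemma \ref{christoffelbounds}. For the base case $k=0$ we have $\nabla_{x'}^0 x' = x' = v$, and part 2 of Lemma \ref{christoffelbounds} gives $|v|_{g(x(t))} \le 2|v| = 2$, so we may take $C_0'' = 2$. For $k=1$ we use that $x'' = 0$ (the curve is a straight line in Euclidean coordinates), so \eqref{covariantderivative} reads $\nabla_{x'}x' = \Gamma^k_{ij}(x)v^iv^je_k$; since $|v|=1$ and each $|\Gamma^k_{ij}| \le C_0'$ on $B_{r'}$, the Euclidean norm of $\nabla_{x'}x'$ is bounded by a constant depending only on $d$ and $C_0'$, and then part 2 of the lemma converts this to a bound on the $g$-norm.

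For the inductive step, suppose $|\nabla_{x'}^j x'|_g \le C_j''$ for all $j \le k$. The first task is to convert this back into a bound on the \emph{Euclidean} norm of the components of the vector field $w := \nabla_{x'}^k x'$ — again via part 2 of Lemma \ref{christoffelbounds}, $|w| \le 2|w|_g \le 2C_k''$. Next, apply \eqref{covariantderivative} with $v(t) = w(t)$:
\[
\nabla_{x'}^{k+1} x' = \nabla_{x'} w = w' + \Gamma^l_{ij}(x)(x^i)' w^j e_l.
\]
The second term is controlled since $|x'| = 1$, the Christoffel symbols are bounded by $C_0'$, and the components of $w$ are bounded as just noted. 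For the first term I need a bound on $|w'|$, the Euclidean derivative of the components of $\nabla_{x'}^k x'$. Here one expands $w$ explicitly: iterating \eqref{covariantderivative} expresses the components of $\nabla_{x'}^k x'$ as a universal polynomial in the components of $v$ (which are constant, since $x' \equiv v$), the Christoffel symbols $\Gamma$ evaluated along $x(t)$, and their derivatives $\partial \Gamma, \ldots, \partial^{k-1}\Gamma$ along $x(t)$ — differentiating along the curve introduces one extra Euclidean derivative of $\Gamma$ at each stage, and since $x' = v$ is constant no derivatives of $x'$ ever appear. Differentiating this polynomial expression once more in $t$ therefore involves only the Christoffel symbols and their partial derivatives up to order $k$, all of which are bounded by $C_k'$ on $B_{r'}$ by part 1 of Lemma \ref{christoffelbounds}. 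This yields a bound $|w'| \le P_k(C_0', \ldots, C_k', d)$ for some universal polynomial $P_k$, and hence a Euclidean bound on $\nabla_{x'}^{k+1}x'$, which part 2 of Lemma \ref{christoffelbounds} converts into the desired $g$-norm bound, defining $C_{k+1}''$.

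The main obstacle — really a bookkeeping obstacle rather than a conceptual one — is setting up the combinatorial claim that the components of $\nabla_{x'}^k x'$ along a Euclidean straight line are given by a universal polynomial in $v$ and in the Christoffel symbols and their derivatives of order $\le k-1$, with no appearance of higher derivatives of the curve. Once this structural fact is in place, every term is uniformly bounded by Lemma \ref{christoffelbounds} and the passage between Euclidean and $g$-norms is immediate from property 2. One could also phrase the induction so as to avoid ever naming the polynomial explicitly: carry as the inductive hypothesis the pair of statements that both $|\nabla_{x'}^j x'|_g$ and the Euclidean norm of the $t$-derivative of its components are bounded for $j \le k$, and observe that \eqref{covariantderivative} propagates both, using that $\partial_t$ of the Christoffel term costs one derivative of $\Gamma$ plus (by the inductive hypothesis) the already-bounded $w'$. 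I would present it in this latter self-contained form to keep the argument short.
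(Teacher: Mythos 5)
Your argument is the same as the paper's: induct on $k$ via the recursion $\nabla_{x'}^{k+1}x' = w' + \Gamma^l_{ij}(x^i)'w^j e_l$, observing that since $x'\equiv v$ is constant the components of $\nabla_{x'}^k x'$ are a universal polynomial in $v$ and in the Christoffel symbols and their partial derivatives of order $\le k-1$ along the line, so everything is controlled by Lemma \ref{christoffelbounds} and the Euclidean/Riemannian norms are comparable; this is correct and in fact spells out the structural claim the paper leaves implicit. One caveat: the ``self-contained'' variant you propose at the end does not close as stated, because bounding the $t$-derivative of $v_{k+1}=v_k'+\Gamma v v_k$ requires $v_k''$, which is not covered by an inductive hypothesis controlling only first $t$-derivatives --- you would need to carry bounds on all $t$-derivatives $\partial_t^m v_j$ with $m+j$ up to the target order, so the universal-polynomial formulation is the one to keep.
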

\begin{proof}
From Lemma \ref{christoffelbounds} the Riemannian norm of $v$ is bounded by $2$ at all points in $B_{r'}$.  This shows that one can take $C_0'' = 2$.

In order to bound the higher order covariant derivatives define inductively
\[v_2(t) = \nabla_{x'}v = v^i v^j \Gamma_{ij}^k e_k\]
and
\[v_{n+1}(t) = \nabla_{x'}v_n(t) = v_n' + v^iv_n^j\Gamma_{ij}^ke_k.\]

Since the coordinates  $v^i$ of $v$ are constants of absolute value less than or equal to $1$ the Euclidean norm of $v_{n+1}$ can be bounded in terms of that of $v_n$ and the derivatives of the Christofell symbols.  This is possible and is equivalent to bounding the Riemannian norm due to Lemma \ref{christoffelbounds}.
\end{proof}

We denote by $x^{(k)}(t)$ denote the $k$-th (Euclidean) derivative of a curve in $\R^d$.
\begin{lemma}
Fix $\manifolds = \manifolds\left(d,r,\lbrace C_k\rbrace\right)$ and let $C_k''$ and $r'$ be given by Lemma \ref{christoffelbounds}.  There exists a sequence $C_k'''$ such that for any metric $g$ on $B_{r'}$ obtained by pullback from a normal parametrization of a manifold $M \in \manifolds$ and any curve $x(t)$ satisfying
\[|\nabla_{x'}^kx'|_{g} \le C_k''\]
for all $k \ge 0$ one has
\[|x^{(k)}(t)| \le C_k'''\]
for all $k \ge 0$.
\end{lemma}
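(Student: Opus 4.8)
Equation \eqref{covariantderivative} expresses $\nabla_{x'}v$ as $v' + \Gamma^k_{ij}(x^i)'v^j e_k$, so if we set $v = x'$ we get
\[
x'' = \nabla_{x'}x' - \Gamma^k_{ij}(x^i)'(x^j)'e_k,
\]
which already bounds $x''$ in terms of $|\nabla_{x'}x'|_g$, the Christoffel bounds $C_0'$ from Lemma \ref{christoffelbounds}, and $|x'|$. The general idea is that differentiating \eqref{covariantderivative} repeatedly and solving for the top-order Euclidean derivative $x^{(k+1)}$ writes it as $\nabla_{x'}^k x'$ minus a polynomial expression in the lower-order derivatives $x', x'', \ldots, x^{(k)}$ and in the partial derivatives of the Christoffel symbols evaluated along the curve.

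First I would prove by induction on $k$ that there is a universal polynomial identity
\[
\nabla_{x'}^k x' = x^{(k+1)} + P_k\bigl(x^{(1)},\ldots,x^{(k)};\ \{\partial^{\alpha}\Gamma\}_{|\alpha|\le k-1}\bigr),
\]
where $P_k$ is a polynomial (with integer coefficients, independent of the metric and the curve) in the indicated quantities, with $P_k$ depending on the Christoffel symbols and their partial derivatives only up to order $k-1$, each such derivative evaluated at $x(t)$. The base case $k=0$ is trivial ($P_0 = 0$) and $k=1$ is the displayed formula above. The inductive step is just applying $\nabla_{x'}$ to both sides using \eqref{covariantderivative}: the left side becomes $\nabla_{x'}^{k+1}x'$, and on the right side $\tfrac{d}{dt}x^{(k+1)} = x^{(k+2)}$ produces the new top-order term while the chain rule applied to the $\partial^\alpha\Gamma$ terms along $x(t)$ raises the order of the Christoffel derivatives by one and brings down a factor of $x'$, and differentiating the lower-order $x^{(j)}$ in $P_k$ only produces terms of order $\le k+1$. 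All of this stays inside the class of polynomials of the asserted shape.

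Next I would solve for the top-order term, $x^{(k+1)} = \nabla_{x'}^k x' - P_k(\ldots)$, and bound everything. By hypothesis $|\nabla_{x'}^k x'|_g \le C_k''$, and by Lemma \ref{christoffelbounds}(2) the Euclidean norm is within a factor $2$ of the $g$-norm on $B_{r'}$, so $|\nabla_{x'}^k x'| \le 2C_k''$. Each partial derivative of each Christoffel symbol appearing in $P_k$ has order $\le k-1$ and is therefore bounded in absolute value by $C_{k-1}'$ (hence by $\max_{j\le k-1}C_j'$) uniformly over all metrics in the class and all points of $B_{r'}$, again by Lemma \ref{christoffelbounds}(1). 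Since $P_k$ is a fixed polynomial, we can then define $C_{k+1}'''$ recursively: it is a fixed polynomial expression in $C_k''$, in $C_0''',\ldots,C_k'''$, and in the Christoffel bounds $C_0',\ldots,C_{k-1}'$, with $C_0''' = C_0'' \le 2$ and $C_1''' \le C_1'' + (C_0')(C_0'')^2$ or similar. This yields $|x^{(k)}(t)| \le C_k'''$ for all $k$ and completes the proof.

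**The main obstacle** is bookkeeping rather than anything deep: one must track carefully that when $\nabla_{x'}$ is applied to $P_k$ evaluated along the curve, the order of the Christoffel-derivative terms goes up by exactly one (via $\partial^\alpha\Gamma \mapsto (x^i)'\partial_i\partial^\alpha\Gamma$) and never exceeds $k-1 \to k$, so that the inductive hypothesis on the Christoffel bounds from Lemma \ref{christoffelbounds} always applies; and that differentiating the $x^{(j)}$ factors of $P_k$ only raises their order to at most $k+1$, with order exactly $k+1$ occurring linearly — but since $\nabla_{x'}^k x'$ is itself solved for, the new top-order term $x^{(k+2)}$ appears only once, linearly, in $\nabla_{x'}^{k+1}x'$. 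Making the shape of $P_k$ precise enough to carry the induction while keeping it "a fixed polynomial" is the only subtlety. I would state the polynomial identity as an explicit sublemma (proved by induction, omitting the routine algebra) and then the bound follows by a one-line estimate.
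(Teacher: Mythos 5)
Your proposal is correct and follows essentially the same route as the paper: the paper's proof likewise observes that $\nabla_{x'}^k x'$ contains a single term equal to $x^{(k+1)}$ plus terms bounded by lower-order Euclidean derivatives of $x$ and derivatives of the Christoffel symbols, and then inducts, using the norm comparison from Lemma \ref{christoffelbounds}. Your explicit polynomial identity $P_k$ just makes the paper's one-sentence induction more precise.
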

\begin{proof}
By Lemma \ref{christoffelbounds} the Euclidean and Riemannian norms differ at most by a factor of $2^{\pm1/2}$.

In particular one can take $C_0''' = 2 C_0''$ and the Euclidean norm of
\[\nabla_{x'}x' = x'' + \Gamma^k_{ij}(x^i)' (x^j)'e_k\]
is bounded by $2C_1''$.

Since one has $|x'| \le 2C_0''$ one obtains from the last equation a bound for $|x''|$.

The higher order case follows by induction since there is a single term in $\nabla_{x'}^kx'$ which is equal to $x^{(k+1)}$ and the rest can be bounded in terms of lower order derivatives of $x$ and the derivatives of the Christoffel symbols.
\end{proof}

We now complete the final step for the proof of Lemma \ref{higherordercheeger}.
\begin{lemma}
Let $f:U \subset \R^d \to \R^d$ be a smooth function satisfying
\[|g^{(k)}(0)| \le C_k''\]
for all $k \ge 0$ and $g$ of the form $g(t) = f(x + tv)$ with $|v| = 1$ and $x \in U$.  Then for all $x \in U$ one has
\[|\partial_{i_1}\cdots\partial_{i_k}f(x)| \le C_k''\]
for all $k \ge 0$ and $i_1,\ldots,i_k \in \lbrace 1,\ldots,d\rbrace$.
\end{lemma}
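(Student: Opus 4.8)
The plan is to interpret both sides of the claimed inequality as values of the single symmetric $k$-linear map $D^kf(x)\colon(\R^d)^k\to\R^d$ — the $k$-th total derivative of $f$ at the point $x$ — evaluated respectively on the diagonal and on a tuple of basis vectors; the lemma then reduces to the classical fact that a symmetric multilinear map attains its norm on the diagonal.

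First I would record the iterated chain rule: if $g(t)=f(x+tv)$ then $g^{(k)}(0)=D^kf(x)[v,\ldots,v]$, while $\partial_{i_1}\cdots\partial_{i_k}f(x)=D^kf(x)[e_{i_1},\ldots,e_{i_k}]$, where $e_1,\ldots,e_d$ is the standard basis. Thus the hypothesis says exactly that $\|D^kf(x)[v,\ldots,v]\|\le C_k''$ for every unit vector $v$, where $\|\cdot\|$ is the Euclidean norm on the target; since $u\mapsto D^kf(x)[u,\ldots,u]$ is homogeneous of degree $k$ this is equivalent to $\sup_{\|u\|\le1}\|D^kf(x)[u,\ldots,u]\|\le C_k''$ (the case $k=0$ being trivial, as then the statement is just $|f(x)|\le C_0''$). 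The goal is to transfer this bound to the off-diagonal value $D^kf(x)[e_{i_1},\ldots,e_{i_k}]$.

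The key input is the theorem (see \cite{waterhouse1990}) that a \emph{scalar} symmetric $k$-linear form $\Phi$ on a real inner product space satisfies $\sup_{\|u_1\|\le1,\ldots,\|u_k\|\le1}|\Phi(u_1,\ldots,u_k)|=\sup_{\|u\|\le1}|\Phi(u,\ldots,u)|$. To apply it to the vector-valued $D^kf(x)$ I would pass to the scalar case by duality in the target: for $a\in\R^d$ with $|a|\le1$ the form $\Phi_a(u_1,\ldots,u_k)=\langle a,\,D^kf(x)[u_1,\ldots,u_k]\rangle$ is scalar, symmetric and $k$-linear (symmetry of $D^kf(x)$ being equality of mixed partials), and $\|D^kf(x)[u_1,\ldots,u_k]\|=\sup_{|a|\le1}|\Phi_a(u_1,\ldots,u_k)|$ by Cauchy--Schwarz. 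Interchanging the two suprema and applying the cited theorem to each $\Phi_a$ gives
\[
\sup_{\|u_i\|\le1}\bigl\|D^kf(x)[u_1,\ldots,u_k]\bigr\|
=\sup_{|a|\le1}\;\sup_{\|u_i\|\le1}\bigl|\Phi_a(u_1,\ldots,u_k)\bigr|
=\sup_{|a|\le1}\;\sup_{\|u\|\le1}\bigl|\Phi_a(u,\ldots,u)\bigr|
=\sup_{\|u\|\le1}\bigl\|D^kf(x)[u,\ldots,u]\bigr\|.
\]
Since the $e_{i_j}$ have unit norm, the left-hand side bounds $\|D^kf(x)[e_{i_1},\ldots,e_{i_k}]\|=|\partial_{i_1}\cdots\partial_{i_k}f(x)|$, while the right-hand side is at most $C_k''$ by hypothesis, which is the desired inequality at the (arbitrary) point $x$.

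There is no real obstacle here: the only steps requiring any care are the identification $g^{(k)}(0)=D^kf(x)[v,\ldots,v]$ via repeated differentiation and the routine reduction from the scalar statement of \cite{waterhouse1990} to its vector-valued analogue; all the substance of the lemma is packaged in the cited fact that a symmetric multilinear form is no larger off the diagonal than on it.
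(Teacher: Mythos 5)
Your proposal is correct and follows essentially the same route as the paper: identify the iterated directional derivatives with the symmetric $k$-linear map $D^kf(x)$, read the hypothesis as a bound on its diagonal values, and invoke Waterhouse's theorem that a symmetric multilinear form on an inner product space is no larger off the diagonal than on it. Your explicit reduction from the vector-valued to the scalar case via duality in the target is a small point the paper leaves implicit, but it is the same argument.
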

\begin{proof}
Define inductively
\[D_xf(v) = \lim_{h \to 0}\frac{f(x + hv) - f(x)}{h}\]
\[D^2_xf(v_1,v_2) = \lim_{h \to 0}\frac{D_{x+hv_1}f(v_2) - D_xf(v_2)}{h}\]
\[D^{k+1}_xf(v_1,\ldots, v_{k+1}) = \lim_{h \to 0}\frac{D_{x+hv_1}^k f(v_2,\ldots,v_{k+1}) - D_x^kf(v_2,\ldots, v_{k+1})}{h}.\]

Letting $P^k_xf(v) = D^k_xf(v,\ldots,v)$ we have by hypothesis and multilinearity that $|P^k_xf(v)| \le C_k^{''}|v|^k$.

Since partial derivatives commute the multilinear function $D^k_xf:(\R^d)^k \to \R^d$ is symmetric and $P^k_xf$ determines $D^k_xf$ by polarization.  This implies a bound for the mixed partial derivatives, and in fact one has $|D_x^kf(v_1,\ldots,v_k)| \le C_k^{''}|v_1|\cdots|v_k|$ as shown in \cite{waterhouse1990}.
\end{proof}

\section{Curvature and injectivity radius}\label{closedness}

In this section we prove that sets of manifolds with uniform bounded geometry are closed with respect to smooth convergence.  This was used in the proof of Theorem \ref{compactnessofmanifoldspaces}.  

\begin{lemma}
Suppose $\manifolds = \manifolds\left(d,r,\lbrace C_k\rbrace\right)$ for some value of the parameters.  If $(M_n,o_n,g_n)$ is a sequence in $\manifolds$ converging smoothly to $(M,o,g)$ then $M \in \manifolds$.
\end{lemma}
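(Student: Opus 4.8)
The statement asserts that $\manifolds = \manifolds(d,r,\{C_k\})$ is closed under smooth convergence: if $(M_n,o_n,g_n) \to (M,o,g)$ smoothly, then $M$ has injectivity radius at least $r$ everywhere and $|\nabla^k R| \le C_k$ for all $k$. I would attack the two conditions separately, the curvature bounds first (which are the easy part) and the injectivity radius bound second (the main obstacle).

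\textbf{Curvature bounds.} Fix a point $p \in M$ and $k \ge 0$. Choose $\rho > d(o,p) + 1$ and, by the definition of smooth convergence, a sequence of pointed smooth embeddings $f_n : B_\rho(o) \to M_n$ with $f_n^* g_n \to g$ smoothly (i.e.\ in every $C^m$ norm) on compact subsets of $B_\rho(o)$. The curvature tensor and all its covariant derivatives are universal smooth expressions in the metric coefficients and their partial derivatives (in any fixed coordinate chart), and these expressions depend continuously on the metric in the $C^m$ topology for $m$ large enough relative to $k$. Since $f_n$ is an isometric embedding of $(B_\rho(o), f_n^*g_n)$ into $M_n$, one has $|\nabla^k R_{f_n^*g_n}|(x) = |\nabla^k R_{g_n}|(f_n(x)) \le C_k$ for every $x \in B_\rho(o)$. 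Passing to the limit at $x$ corresponding to $p$, and using that $\nabla^k R$ computed from $f_n^*g_n$ converges pointwise to $\nabla^k R$ computed from $g$, we get $|\nabla^k R_g|(p) \le C_k$. Since $p$ and $k$ were arbitrary, the curvature bounds hold on $M$.

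\textbf{Injectivity radius bound.} This is the delicate part, since injectivity radius is not a local pointwise quantity expressible in metric derivatives; it can drop under limits if short geodesic loops are forming. Fix $p \in M$; I must show $\mathrm{inj}_M(p) \ge r$. Suppose for contradiction that $\mathrm{inj}_M(p) = r - \delta$ for some $\delta > 0$. Work inside a large ball $B_\rho(o)$ with $\rho$ chosen much larger than $d(o,p) + r$, and pull back via the embeddings $f_n$ as above, so that on $B_\rho(o)$ the metrics $f_n^*g_n$ converge smoothly to $g$; in particular their Riemannian distance functions converge uniformly on $\overline{B_{\rho/2}}(o)$, and lengths of curves converge. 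There are two standard ways injectivity radius can be small: either (a) there is a geodesic loop at $p$ of length $2\mathrm{inj}_M(p) < 2r$, or (b) there is a conjugate point along a geodesic of length $< r$ — but case (b) is excluded because conjugate points are controlled by curvature, and we already have uniform sectional curvature bounds, so for $r$ fixed (and, if necessary, after noting $r$ was chosen with $\sqrt{|\sec|}\, r < \pi$ in Section \ref{leafgeometrysection}) there are no conjugate points within distance $r$. So we are in case (a): there is a unit-speed geodesic $\gamma : [0, L] \to M$ with $\gamma(0) = \gamma(L) = p$, $L < 2r$, and $\gamma$ a geodesic loop. Pull $\gamma$ back through a normal chart or simply transport it: the preimage $\tilde\gamma_n = f_n^{-1} \circ (\text{a nearby geodesic loop})$ — more precisely, solve the geodesic equation for $f_n^*g_n$ with initial conditions $C^\infty$-close to those of $\gamma$; by smooth dependence of ODE solutions on coefficients and initial data, these solutions converge to $\gamma$ in $C^1$ on $[0,L]$. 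Since $\gamma$ returns to $p$, the corresponding geodesic of $g_n$ (i.e.\ $f_n \circ \tilde\gamma_n$) returns to within $o(1)$ of $f_n(p)$; closing it up with a short minimizing segment of length $o(1)$ produces a closed loop in $M_n$ based at $f_n(p)$ of length $< 2r$ that is \emph{not} null-homotopic in a small ball — equivalently, it shows $\mathrm{inj}_{M_n}(f_n(p)) \le r - \delta/2 < r$ for $n$ large, contradicting $M_n \in \manifolds$.

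\textbf{Main obstacle.} The genuine difficulty is making the last paragraph rigorous: one must argue that the approximate geodesic loop in $M_n$ really forces a genuine drop in injectivity radius, not merely a short \emph{null-homotopic} loop (which is consistent with large injectivity radius). The clean way is to invoke Klingenberg's lemma: if $\mathrm{inj}(p) < \pi/\sqrt{\sup|\sec|}$ then $\mathrm{inj}(p) = \tfrac12(\text{length of shortest geodesic loop at } p)$, so the presence of a geodesic loop of length $L < 2r \le 2\pi/\sqrt{\sup|\sec|}$ \emph{directly} bounds $\mathrm{inj} \le L/2$; symmetrically, lower bounds on $\mathrm{inj}$ in this curvature regime are equivalent to lower bounds on the length of geodesic loops, and the latter is manifestly a quantity that is lower-semicontinuous under $C^2$-convergence of metrics on a fixed domain (short geodesic loops cannot appear in the limit without being approximated by nearly-geodesic short loops beforehand, which are then perturbed to genuine geodesic loops of comparable length by the implicit function theorem / shooting argument). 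Assembling this: $\mathrm{inj}_M(p) \ge \liminf_n \mathrm{inj}_{M_n}(f_n(p)) \ge r$. I would present the curvature part in full and then carry out the injectivity-radius part via this Klingenberg-lemma reduction, citing the standard statement (e.g.\ \cite[Chapter 6]{petersen2006}).
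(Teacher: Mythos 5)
Your curvature argument is correct and is essentially the paper's: the coefficients of $\nabla^k R$ are universal smooth expressions in the metric coefficients, their derivatives and the inverse matrix, so the pointwise bounds $|\nabla^k R|\le C_k$ pass to the smooth limit. The gap is in the injectivity radius part. First, your dismissal of the conjugate-point branch of Klingenberg's dichotomy does not work: nothing in the definition of $\manifolds(d,r,\lbrace C_k\rbrace)$ forces $r\le \pi/\sqrt{\Lambda}$ where $\Lambda$ bounds the sectional curvature (flat $\R^d$ lies in $\manifolds(d,r,\lbrace C_k\rbrace)$ for \emph{every} $r$, so these sets are nonempty with $r$ arbitrarily large relative to the curvature bound), and the parenthetical appeal to how $r$ "was chosen" in Section \ref{leafgeometrysection} is unavailable — the lemma concerns arbitrary parameters, and even in the application no such relation between $r$ and $C_0$ is established. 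So a conjugate point at distance in $[\pi/\sqrt{\Lambda},r)$ in the limit must be genuinely ruled out (e.g.\ by upper semicontinuity of the conjugate distance via the index form), not waved away by curvature. Second, in the loop branch, an almost-closed geodesic in $M_n$ does not by itself force $\mathrm{inj}_{M_n}<r$: "not null-homotopic in a small ball" is not the relevant criterion (geodesic loops on a round sphere are null-homotopic yet they do determine the injectivity radius), and your proposed fix — perturbing to a genuine geodesic loop by the implicit function theorem — requires a transversality/nondegeneracy condition on the loop that you have not verified and that can fail. You correctly flag this as the main obstacle, but the obstacle is not overcome.

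The paper's proof (Lemma \ref{injectivitysemicontinuity}) sidesteps both issues with a single characterization: if $\mathrm{inj}_M(p)<r$ then, by Proposition 19 and Lemma 14 of \cite[pg.~139--142]{petersen2006}, there is a geodesic $\alpha$ of length $L<r$ together with a strictly shorter smooth curve $\beta$ joining the same endpoints (this covers the cut point arising from a loop and from a conjugate point simultaneously, since geodesics fail to minimize past either). Transporting $\alpha$ (by solving the geodesic equation for $f_n^*g_n$ with the same initial condition) and $\beta$ to $M_n$, one finds for large $n$ a non-minimizing geodesic of length $<r$ in $M_n$; Hopf--Rinow then supplies a second, minimizing geodesic of length $<r$ with the same endpoints, contradicting injectivity of the exponential map on the ball of radius $r$. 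You would do well to replace the Klingenberg reduction by this argument, or else to supply the missing semicontinuity of the conjugate distance and the missing perturbation lemma for geodesic loops.
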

\begin{proof}
The fact that the injectivity radius of $M$ is larger than or equal to $r$ follows because the injectivity radius is upper semicontinuous with respect to smooth convergence as we will show in the next subsection (see Lemma \ref{injectivitysemicontinuity}).

We will now establish that $M$ satisfies the curvature bounds
\[|\nabla^k R|_g \le C_k.\]

Let $(g_{ij})$ be the matrix of coefficients of a metric $g$ on an open subset of $\R^{d}$ and $(g^{ij})$ the inverse matrix.  The $g$ norm of a $(p,q)$ tensor field\begin{footnote}{In all tensor calculations we use the convention that summation is implied over indices which are repeated in a term}\end{footnote}
\[T = a_{i_1,\ldots,i_q}^{i_{q+1},\ldots,i_{p+q}} e^{i_1}\otimes \cdots \otimes e^{i_q} \otimes e_{i_{q+1}} \otimes \cdots e_{i_{p+q}}\]
(where we denote by $e_i$ the canonical basis and $e^i$ the dual basis of $\R^{d}$) is given by
\[|T|_g^2 = a_{i_1,\ldots,i_q}^{i_{q+1},\ldots,i_{p+q}}a_{j_1,\ldots,j_q}^{j_{q+1},\ldots,j_{p+q}}g^{i_1j_1}\cdots g^{i_q j_q}g_{i_{q+1}j_{q+1}} \cdots g_{i_{p+q}j_{p+q}}.\]

The curvature tensor of $g$ is the $(1,3)$-tensor field $R = R_{ijk}^l e^i\otimes e^j \otimes e^k \otimes e_l$ given by (e.g. see \cite[Section 5]{brewin2010})
\[R_{ijk}^l = \partial_j \Gamma_{ki}^l - \partial_k \Gamma^l_{jk} + \Gamma_{jm}^k \Gamma^m_{ki} - \Gamma^l_{km}\Gamma^m_{ji}\]
where the Christoffel symbols $\Gamma^k_{ij}$ are defined by
\[\Gamma_{ij}^k = \frac{1}{2}g^{kl}(\partial_i g_{il} + \partial_j g_{jl} - \partial_l g_{ij}).\]

Since matrix inversion is smooth the two formulas above prove that if a sequence of metrics $g_n$ converges uniformly on compact sets to $g$ then the norm of their curvature tensors converge pointwise to that of $g$.

Similarly, for each $k$ the covariant derivative $\nabla^k R$ is a $(1,3+k)$-tensor field whose coefficients are smooth functions of the partial derivatives of the coefficients $g_{ij}$ and $g^{ij}$.  This shows that the bound $|\nabla^k R| \le C_k$ passes to the limit when a sequence of manifolds converges $C^\infty$ to another.  Hence one has that the limit manifold $M$ of the the sequence $M_n$ also satisfies these bounds.
\end{proof}

\subsection{Semicontinuity of the injectivity radius}

Continuity of the injectivity radius with respect to a varying family of metrics on a single compact manifold was established in \cite{ehrlich1974} and \cite{sakai1983}.  

The injectivity radius isn't continuous under smooth convergence of pointed manifolds as can be seen by considering a metric $g$ on $\R^2$ which has finite injectivity radius but is flat outside of a compact set.   In this setting the sequence of pointed manifolds $(\R^2, x_n, g)$ will smoothly converge to $\R^2$ endowed with the Euclidean metric if $x_n \to \infty$ when $n \to +\infty$.  Hence we have a sequence of manifolds with finite and constant injectivity radius converging to a manifold whose injectivity radius is infinite.

\begin{figure}[H]
\centering
\begin{minipage}{0.75\textwidth}
\includegraphics[width=\textwidth]{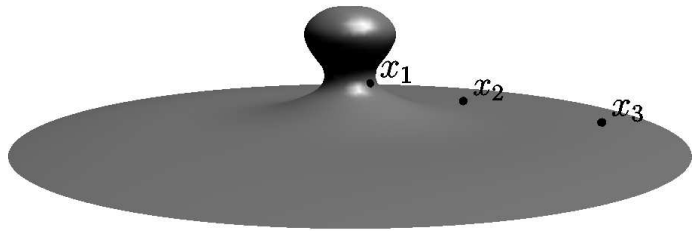}
\caption{An asymptotically flat surface with finite injectivity radius.  Changing the basepoint gives an sequence converging to a limit whose injectivity radius is infinite.}
\end{minipage}
\end{figure}

However, upper semicontinuity still holds as we will now show.

\begin{lemma}[Semicontinuity of the injectivity radius]\label{injectivitysemicontinuity}
The injectivity radius is upper semicontinuous with respect to smooth convergence.
\end{lemma}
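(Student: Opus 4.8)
The plan is to prove the equivalent, and more convenient, statement: for each $c\g 0$, the condition $\mathrm{inj}(\cdot,\cdot)\ge c$ is preserved under smooth limits. This is precisely what the lemma is invoked for above (with $c=r$), and it implies the asserted upper semicontinuity $\mathrm{inj}(M,o)\ge\limsup_n\mathrm{inj}(M_n,o_n)$ by a routine $\limsup$ argument. So I would start from a smoothly convergent sequence $(M_n,o_n,g_n)\to(M,o,g)$ with $\mathrm{inj}(M_n,o_n)\ge c$ for all $n$ and aim to deduce $\mathrm{inj}(M,o)\ge c$. Fix $r$ much larger than $c$, take the pointed embeddings $f_n\colon B_r(o)\to M_n$ supplied by smooth convergence, and set $h_n=f_n^{\ast}g_n$, so $h_n\to g$ in $C^{\infty}$ on compact subsets of $B_r(o)$. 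Two observations drive the proof. (1) For $r$ and $n$ large the map $\exp_o^{h_n}$ is defined on the $h_n(o)$-ball of radius $c$ in $T_oM$ and is a diffeomorphism onto its image there; this is inherited from $\mathrm{inj}(M_n,o_n)\ge c$ through the isometry $f_n$ once one checks, by comparing the metrics $h_n$ and $g$, that for $r$ large no $h_n$-geodesic issuing from $o$ of length $\l c$ can leave $B_r(o)$. (2) By continuous dependence of the geodesic flow on the metric, $\exp_o^{h_n}\to\exp_o^g$ in $C^{\infty}$ near the relevant balls; in particular $\exp_o^g$ is a local diffeomorphism on the open ball of radius $c$.

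Then I would argue by contradiction, supposing $\mathrm{inj}(M,o)=\rho\l c$. Then $\exp_o^g$ fails to be a diffeomorphism onto its image on the $g(o)$-ball $B_c(0)\subset T_oM$, so either $d\exp_o^g$ is singular somewhere in $B_c(0)$, or $\exp_o^g$ is not injective on $B_c(0)$. In the first case there is a point conjugate to $o$ along a unit-speed $g$-geodesic $\gamma$ at distance $t_0\l c$; choosing $t_1\in(t_0,c)$, the index form of $g$ along $\gamma|_{[0,t_1]}$ has positive index (Morse index theorem), and since that index form depends continuously (in $C^2$) on the metric and on the geodesic, for $n$ large the index form of $h_n$ along the $h_n$-geodesic with initial velocity $\gamma'(0)$ also has positive index; hence that geodesic carries a conjugate point before $t_1$, which under $f_n$ gives a point conjugate to $o_n$ within $g_n$-distance $\l c$, contradicting $\mathrm{inj}(M_n,o_n)\ge c$. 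So only the second case survives: the conjugate radius of $M$ at $o$ is at least $c$, and there are distinct $v_1,v_2\in T_oM$ with $|v_1|_{g(o)},|v_2|_{g(o)}\l c'\l c$ and $\exp_o^g(v_1)=\exp_o^g(v_2)=:m$.

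To finish I would rule out this last possibility by a uniform bi-Lipschitz estimate. Since the conjugate radius of $M$ at $o$ is at least $c>c'$, $d\exp_o^g$ is invertible with uniformly bounded inverse on the compact ball $\overline{B_{c'}}(0)$; by $C^1$ convergence $d\exp_o^{h_n}$ is invertible there for $n$ large with $\|(d\exp_o^{h_n})^{-1}\|\le L$ for some $L$ independent of $n$. Combined with the global injectivity of $\exp_o^{h_n}$ on the ball of radius $c$, the quantitative inverse function theorem with these uniform constants yields a radius $\varepsilon_0\g 0$, independent of $n$, on which the inverse of $\exp_o^{h_n}$ is $L$-Lipschitz about $\exp_o^{h_n}(v_1)$. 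Since $\exp_o^{h_n}(v_1)$ and $\exp_o^{h_n}(v_2)$ both tend to $m$, for $n$ large both lie in this neighbourhood, whence
\[|v_1-v_2|\le L\,\big|\exp_o^{h_n}(v_1)-\exp_o^{h_n}(v_2)\big|;\]
letting $n\to\infty$ the right-hand side tends to $L\,\big|\exp_o^g(v_1)-\exp_o^g(v_2)\big|=0$, forcing $v_1=v_2$, a contradiction. Hence $\mathrm{inj}(M,o)\ge c$.

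I expect the final step to be the main obstacle: a $C^{\infty}$ limit of embeddings need not be injective, so injectivity of $\exp_o^g$ is not available from continuity alone. What makes the argument go through is that the conjugate-radius bound, obtained first and independently via the Morse index theorem, forces the differentials of the pulled-back exponential maps to be uniformly invertible, which upgrades their injectivity to a uniform bi-Lipschitz bound that is stable under the limit. Extracting the uniform constants -- the scale $r$ on which the $f_n$ are defined, the inverse-differential bound $L$, and the radius $\varepsilon_0$ -- is the real technical content; everything else reduces to continuous dependence of geodesics on the metric together with the Morse index theorem (alternatively, one may route the non-injectivity case through Klingenberg's lemma, producing a geodesic loop at $o$ and applying the same bi-Lipschitz argument to the two geodesic arcs joining $o$ to its midpoint).
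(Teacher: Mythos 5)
Your proof is correct, but it takes a genuinely different route from the paper's. The paper does not split into conjugate-point and cut-point cases at all: it invokes the characterization (Petersen, Prop.~19 and Lemma~14) that injectivity radius below $r$ produces a geodesic $\alpha$ of length $L<r$ together with a strictly shorter curve $\beta$ with the same endpoints, and transports this single witness to $M_n$ --- $\alpha$ by continuous dependence of geodesics on the metric, $\beta$ verbatim, with both lengths converging --- to produce in $M_n$ a non-minimizing geodesic of length $<r$ and hence, via Hopf--Rinow, two geodesics of length $<r$ with common endpoints. The difficulty you correctly identify (injectivity of $\exp$ is not closed under $C^\infty$ limits) is thus sidestepped by comparing lengths rather than by passing injectivity to the limit; the price is reliance on the quoted characterization. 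Your argument is longer but self-contained at the level of the exponential map: the index-form continuity handles conjugate points, and the uniform quantitative inverse function theorem --- whose key input is exactly the conjugate-radius bound extracted from the first case --- upgrades the injectivity of $\exp_o^{h_n}$ to a bi-Lipschitz bound that does survive the limit. That step is delicate but sound, provided you make explicit that the globally defined inverse agrees with the local branch supplied by the inverse function theorem near $v_1$, so that the point $w\in B_{\varepsilon_0}(v_1)$ with $\exp_o^{h_n}(w)=\exp_o^{h_n}(v_2)$ must equal $v_2$ (this is where global injectivity on the $c$-ball is actually used). One small mismatch with the application: the paper needs semicontinuity of the global injectivity radius (the infimum over all points), so your pointed statement must be applied at an arbitrary $p\in M$, using embeddings defined on a ball about $o$ large enough to contain $B_c(p)$; this is routine, and the paper's proof likewise quietly allows its witnessing geodesic to be based anywhere in $M$.
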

\begin{proof}
Suppose for the sake of contradiction that there is a sequence $(M_n,o_n,g_n)$ with the injectivity radius of each term larger than or equal to some $r \g 0$ which converges smoothly to a manifold $(M,o,g)$ whose injectivity radius is strictly less than $r$.

By Proposition 19 and Lemma 14 of \cite[pg. 139-142]{petersen2006}, there exists a geodesic $\alpha:[0,1] \to M$ of length $L \l r$ and some other smooth curve $\beta:[0,1] \to M$ with the same endpoints with length $L' \l L$.

By the definition of smooth convergence there is an open set $\Omega$ containing $\alpha([0,1])$ and $\beta([0,1])$ and a sequence of pointed embeddings $f_n: \Omega \to M_n$ such that $f_n^*g_n$ converges $C^{\infty}$ to $g$ on compact subsets of $\Omega$.

Consider $\alpha_n:[0,1] \to M$ the geodesic for the metric $f_n^*g_n$ with initial condition $\alpha'(0)$.  We claim that $\alpha_n(1) \to \alpha(1)$ and that the $f_n^*g_n$ length of $\alpha_n$ converges to $L$ when $n \to +\infty$.  By covering $\alpha([0,1])$ with a finite number of charts and noticing that in each chart the coefficients of $f_n^*g_n$ converge $C^\infty$ on compact sets to those of $g$, this follows from continuity of solutions to ordinary differential equations with respect to the vector field (see \cite[Theorem B3, pg. 333]{duistermaat-kolk2000}).  We omit further details.

Smooth convergence of $f_n^*g_n$ to $g$ implies that the $f_n^*g_n$ length of $\beta$ converges to $L'$ and the $f_n^*g_n$ distance between $\beta(1)$ and $\alpha_n(1)$ converges to $0$.

Hence for $n$ large enough the manifold $M_n$ contains a geodesic of length strictly less than $r$ which is not the shortest curve between its endpoints.  By the Hopf-Rinow theorem we will find two geodesics of length strictly less than $r$ joining the same endpoints in $M_n$ contradicting the fact that the injectivity radius of $M_n$ is larger than or equal to $r$.
\end{proof}

\section{Smooth convergence and tensor norms}\label{topology}

In this section we discuss in detail the definition of $C^k$ and smooth convergence of pointed Riemannian manifolds.  In particular we provide a coordinate free definition (which has been used throughout the article) of convergence in terms of tensor norms and prove that it is equivalent to definition given in \cite[Chapter 10.3.2]{petersen2006}.  

We also establish that $C^k$ and smooth convergence on certain subsets of $\gromovspace$ comes from a topology, a fact that was used in the proof of Theorem \ref{compactnessofmanifoldspaces}.

\subsection{Coordinate free definition of convergence}

Following \cite[10.3.2]{petersen2006} a sequence $(M_n,o_n,g_n)$ of pointed connected complete Riemannian manifolds is said to converge $C^k$ to $(M,o,g)$ if for every $r \g 0$ there exists a domain $\Omega$ containing $B_r(o)$ and (for $n$ large enough) a sequence of pointed embeddings $f_n:\Omega \to M_n$ such that $f_n(\Omega) \supset B_r(o_n)$ and $f_n^*g_n$ converges $C^k$ to $g$ on compact subsets of $\Omega$.  Smooth convergence is by definition $C^k$ convergence for all $k$.

Recall that the coefficients of a Riemannian metric $g$ defined on an open subset $U$ of $\R^d$ are the functions
\[x \mapsto g(x)(e_i,e_j) = g_{ij}(x)\]
where $e_1,\ldots,e_d$ is the canonical basis of $\R^d$.

By $C^k$ convergence of $f_n^*g_n$ to $g$ on compact subsets of $\Omega$ we mean that for any smooth parametrization $h:U \to V \subset \Omega$ the coefficients of the metrics $h^*f_n^*g_n$ converge to those of $h^*g$ in the $C^k$ topology on every compact subset of $U$.

To see that the restriction to compact subsets of $U$ is necessary consider the sequence of Riemannian metrics $g_n$ on the open interval $(0,1)$ defined by
\[g_n(x)(v,w) = e^{x/n}vw.\]

The sequence of coefficients $x \mapsto e^{x/n}$ in this example converges uniformly to the coefficient of the metric $g$ on $(0,1)$ given by
\[g(x)(v,w) = vw\]
however taking pullback under the diffeomorphism $h:(0,1) \to (0,1)$ defined by $h(x) = x^{\alpha}$ one obtains
\[h^*g_n(x)(v,w) = e^{x^{\alpha}/n}\alpha^2x^{2(\alpha-1)}vw\]
so that taking for example $\alpha = 1/2$ one sees that uniform convergence of the sequence of coefficients no longer holds.

We now present a coordinate free definition of $C^k$ convergence.

For this purpose we recall that a $(p,q)$ tensor on a vector space $V$ is an element of $(V^*)^{\otimes q} \otimes V^{\otimes p}$.  If $g$ is an inner product on $V$ then $g$ induces an inner product and norm on the space of $(p,q)$ tensors.  This inner product can be defined by taking any $g$-orthonormal basis $v_1,\ldots,v_d$ of $V$, considering the dual basis $v^1,\ldots, v^d$,  and declaring that the tensors of the form $v^{i_1}\otimes \cdots v^{i_q}\otimes v_{i_{1+q}} \otimes \cdots \otimes v_{i_{p+q}}$ are orthonormal.

In particular given a Riemannian manifold $(M,g)$ and a $(p,q)$ tensor field $T$ one can consider the tensor norm $|T(x)|_g$ of the tensor $T(x)$ over the tangent space $T_xM$ with respect to the inner product $g(x)$.

\begin{lemma}[Characterization of convergence]\label{coordinatefreeconvergence}
A sequence $(M_n,o_n,g_n)$ of pointed connected complete Riemannian manifolds converges $C^k$ to $(M,o,g)$ if and only if for each $r \g 0$ there exists a sequence of pointed embeddings (defined for $n$ large enough) $f_n:B_r(o) \to M_n$ such that
\[\lim\limits_{n \to +\infty}\sup\lbrace |\nabla^i (f_n^*g_n - g)(x)|_g: x \in B_r(o), i = 0,\ldots,k\rbrace \to 0\]
where $\nabla$ denotes the covariant derivative corresponding to the Riemannian metric $g$ (in particular for $i \neq 0$ one has $\nabla^i g = 0$).
\end{lemma}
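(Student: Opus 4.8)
The plan is to prove the two implications of Lemma~\ref{coordinatefreeconvergence} separately, after isolating a purely local ingredient: a \emph{tensor--calculus dictionary} relating $C^k$ convergence of metric coefficients in charts to uniform convergence of the iterated covariant derivatives $\nabla^i(f_n^*g_n-g)$ measured in the $g$--norm. Concretely, working on an open set $U\subset\R^d$ with a fixed smooth metric $g$ and a sequence $h_n$ of smooth metrics, I claim that on every compact $K\subset U$ the condition $\max_{0\le i\le k}\sup_K|\nabla^i(h_n-g)|_g\to 0$ (with $\nabla$ the Levi-Civita connection of $g$) is equivalent to $C^k$ convergence of the coefficients of $h_n$ to those of $g$ on $K$. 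Both directions are an induction on the order of differentiation based on the coordinate expression $(\nabla_m T)_{ij}=\partial_m T_{ij}-\Gamma^l_{mi}T_{lj}-\Gamma^l_{mj}T_{il}$ for $T=h_n-g$: the Christoffel symbols of $g$ and all their derivatives are fixed smooth functions, hence bounded on $K$, so on $K$ one passes freely between the ordinary partials $\partial^\beta T_{ij}$ and the components of the tensors $\nabla^iT$; finally, the $g$--norm of a tensor field is pointwise comparable to the largest absolute value of its components, with a comparison constant bounded on $K$ because $g$ is fixed and continuous. Since connections, covariant differentiation and the induced tensor norms are all natural under pullback by diffeomorphisms, this statement transfers verbatim to pullbacks under arbitrary parametrizations $h\colon U\to V$, and since $\overline{B_r(o)}$ is compact (Hopf--Rinow, as $M$ is complete) it is covered by finitely many such parametrized neighborhoods; thus ``uniformly on $\overline{B_r(o)}$'' in the coordinate-free sense matches ``on compact subsets of each parametrization domain'' in the sense of \cite[10.3.2]{petersen2006}.

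Granting this dictionary, the forward implication is immediate: given $r>0$, apply the definition of \cite[10.3.2]{petersen2006} at radius $r+1$ to obtain a domain $\Omega\supset B_{r+1}(o)\supset\overline{B_r(o)}$ and embeddings $f_n\colon\Omega\to M_n$ with $f_n^*g_n\to g$ in $C^k$ on compact subsets of $\Omega$; restricting $f_n$ to $B_r(o)$ and applying the dictionary on the compact set $\overline{B_r(o)}$ yields $\sup_{B_r(o)}\sum_{i\le k}|\nabla^i(f_n^*g_n-g)|_g\to 0$, which is the coordinate-free condition at radius $r$.

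For the reverse implication, given $r>0$ apply the coordinate-free hypothesis at radius $2r$ to obtain embeddings $f_n\colon B_{2r}(o)\to M_n$ with $a_n:=\sup_{B_{2r}(o)}\sum_{i\le k}|\nabla^i(f_n^*g_n-g)|_g\to 0$, and set $\Omega=B_{2r}(o)\supset B_r(o)$. That $f_n^*g_n\to g$ in $C^k$ on compact subsets of $\Omega$ follows at once from the dictionary. The substantive point is to verify $f_n(\Omega)\supset B_r(o_n)$ for $n$ large. From $|f_n^*g_n-g|_g\le a_n$ one gets $|v|_{f_n^*g_n}\ge(1-a_n)^{1/2}|v|_g$ for tangent vectors $v$, so the $g$--length of any curve in $B_{2r}(o)$ exceeds its $f_n^*g_n$--length by at most the factor $(1-a_n)^{-1/2}$, which is below $3/2$ once $n$ is large (compare the metric estimates used in Section~\ref{uniformlyboundedgeometry}). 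Given $p\in M_n$ with $d_{g_n}(o_n,p)<r$, choose a minimizing $g_n$--geodesic $\gamma$ from $o_n=f_n(o)$ to $p$ and lift it through $f_n$; since $f_n$ is an open map onto its image (invariance of domain), the lift is defined on a maximal subinterval, has $g$--length less than $(1-a_n)^{-1/2}r<3r/2$, and therefore stays in the compact set $\overline{B_{3r/2}(o)}\subset\Omega$. Finite length together with completeness of $M$ forces the lift to extend to the endpoint of $\gamma$, exhibiting $p$ as a point of $f_n(\Omega)$. Hence $B_r(o_n)\subset f_n(\Omega)$ and the required condition holds.

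I expect the last paragraph --- the covering argument --- to be the main obstacle: one must take care that the lifted geodesic genuinely extends to its endpoint (using openness of $f_n(\Omega)$, finiteness of its $g$--length, and completeness of $M$) rather than terminating at the boundary of the ball. The tensor--calculus dictionary, though the bulk of the writing, is routine.
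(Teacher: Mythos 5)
Your proposal is correct and follows essentially the same route as the paper: the same local ``dictionary'' (the paper's Lemma~\ref{tensorconvergence}, proved by the same induction on the order of covariant differentiation), the same restriction-from-a-larger-radius argument for the forward direction, and the same length-comparison at radius $2r$ for the converse. Your lifting argument for $f_n(B_{2r}(o))\supset B_r(o_n)$ is in fact spelled out more carefully than the paper's one-line assertion, which leaves the geodesic-lifting step implicit.
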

\begin{proof}
Assume first that a sequence $(M_n,o_n,g_n)$ in $\manifolds$ converges $C^k$ to $(M,o,g)$ and fix $r \g 0$.

By definition of $C^k$ convergence there exists a domain $\Omega \supset B_{2r}(o)$ sequence of pointed embeddings $f_n:\Omega \to M_n$ such that $f_n^*g_n$ converges $C^k$ on compact sets of $\Omega$ to $g$.  This means that in any local chart the coefficients of $f_n^*g_n$ will converge $C^k$ on compact sets to those of $g$.  By Lemma \ref{tensorconvergence} below this implies $|\nabla^i(f_n^*g_n - g)| \to 0$ uniformly on compact subset of $B_{2r}(o)$ for $i = 0,\ldots, k$.  In particular since $\overline{B_r}(o)$ is compact one has
\[\lim\limits_{n \to +\infty}\sup\lbrace |\nabla^i g_n(x) -\nabla^i g(x)|_g: x \in B_r(o), i = 0,\ldots,k\rbrace \to 0\]
as claimed.

We will now prove the converse claim.

Given $r$ we must obtain a sequence of embeddings $f_n$ of an open set $\Omega \supset B_r(o)$ into $M_n$ such that $f_n(\Omega) \supset B_r(o_n)$ and $f_n^*g_n$ converges $C^k$ to $g$ on compact sets.  We will show that one can take $\Omega = B_{2r}(o)$.

By hypothesis there exists a sequence of pointed embeddings $f_n:B_{2r}(o) \to M_n$ such that $|\nabla^i(f_n^*g_n - g)| \to 0$ uniformly for $i = 0,\ldots,k$.  By Lemma \ref{tensorconvergence} below this implies that $f_n^*g_n$ converges $C^k$ to $g$ on compact subsets of $B_{2r}(o)$.

We must now establish that $f_n(B_{2r}(o)) \supset B_r(o_n)$ for all $n$ large enough.

To see this let $v_1,\ldots,v_{d}$ be a $g$-orthonormal basis of the tangent space $T_xM$ at a point $x \in B_{2r}(o)$ and $v^1,\ldots,v^{d}$ the dual basis.  One has $f_n^*g_n(x) = a_{ij} v^i\otimes v^j$ and
\[|(f_n^*g_n - g)(x)|_g^2 = \sum\limits_{i,j} (a_{ij}-\delta_{ij})^2\]
where $(\delta_{ij})$ is the identity matrix.  

For all $n$ large enough the left-hand side above will be small enough to guarantee that $a_{11} = f_n^*g_n(x)(v_1,v_1) = |v_1|_{f_n^*g_n}^2 \g 1/4$.  And, since one can choose any $g$-orthonormal basis to calculate the norm above, this implies
\[\frac{1}{2}|v|_{g} \l |v|_{f_n^*g_n}\]
for all $v \in T_xM$ and all $x \in B_{2r}(o)$.

In particular for $n$ large enough the $f_n^*g_n$ length of any curve joining $o$ and the boundary of $B_{2r}(o)$ will be at least $r$.  So that $f_n(B_{2r}) \supset B_r(o_n)$ as claimed.
\end{proof}

The following consequence was used in the proof of Theorem \ref{compactnessofmanifoldspaces}.

\begin{lemma}\label{topologylemma}
On any subset of $\gromovspace$ of the form $\manifolds = \manifolds\left(d,r,\lbrace C_k\rbrace \right)$ smooth convergence is topologizable.
\end{lemma}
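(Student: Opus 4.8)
The goal is to show that smooth convergence on $\manifolds = \manifolds(d,r,\{C_k\})$ comes from a topology, which (as explained after the ``proof with gap'' of Theorem \ref{compactnessofmanifoldspaces}) is exactly what is needed to rule out the pathological sequence-of-indicator-functions behavior. The plan is to exhibit an explicit metric $\rho$ on $\manifolds$ whose induced convergence coincides with smooth convergence, so that ``topologizable'' is witnessed by an honest metric.

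The plan is as follows. First I would fix, for each $M \in \manifolds$, a canonical ``snapshot'' of its geometry near the basepoint: using the normal parametrization $\psi_M$ at $o_M$, which by the discussion in Section \ref{precompactness} is a diffeomorphism of $B_{r'}$ (and even $B_r$) onto a neighborhood of $o_M$, pull back $g_M$ to obtain a metric $\psi_M^* g_M$ on the fixed Euclidean ball $B_{r'} \subset \R^d$. The subtlety is that the normal parametrization is only determined up to the choice of a linear isometry $f:\R^d \to T_{o_M}M$, i.e.\ up to precomposition with an element of $O(d)$; so the right object is the $O(d)$-orbit of the metric $\psi_M^* g_M$ in the space of Riemannian metrics on $B_{r'}$. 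By Lemma \ref{christoffelbounds} all these pulled-back metrics, together with all their partial derivatives, are uniformly bounded and uniformly elliptic (norms within a factor $2^{\pm 1}$ of Euclidean), so they live in a fixed relatively compact subset of $C^\infty(B_{r'}, \Sym(\R^d))$. Then I would define
\[
\rho(M,N) = \sum_{k=0}^\infty 2^{-k}\min\!\Big(1,\ \inf_{A \in O(d)} \sup_{x \in \overline{B_{r'/2}}}\sum_{i=0}^{k}\big|\partial^{(i)}\big(\psi_M^*g_M - A^*\psi_N^*g_N\big)(x)\big|\Big),
\]
using ordinary Euclidean partial derivatives of the coefficient functions; the inner infimum over $O(d)$ (a compact group) is attained and makes $\rho$ well-defined on isometry classes. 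One checks $\rho$ is a pseudometric by the triangle inequality for sup-norms plus the group structure of $O(d)$, and that $\rho(M,N)=0$ forces a pointed isometry: an orthogonal $A$ realizing the infimum gives $\psi_M^*g_M = A^*\psi_N^*g_N$ on $\overline{B_{r'/2}}$, hence $\psi_M$ and $\psi_N \circ A$ glue to a pointed isometry of the $r'/2$-balls, which then extends to a global pointed isometry of the complete manifolds (two complete Riemannian manifolds isometric on a ball around corresponding points are globally isometric, e.g.\ by extending along geodesics / continuation of isometries).

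Next I would show $\rho$ metrizes smooth convergence on $\manifolds$. For one direction, suppose $\rho(M_n, M) \to 0$. Pick for each $n$ an $A_n \in O(d)$ nearly realizing the infimum; by compactness of $O(d)$ pass to a subsequence with $A_n \to A$, and replace the normal parametrization $\psi_M$ by $\psi_M \circ A$ (same manifold, rotated frame) so that WLOG $A_n \to \mathrm{id}$. Then the coefficients of $\psi_{M_n}^* g_{M_n}$ converge in $C^\infty$ on $\overline{B_{r'/2}}$ to those of $\psi_M^* g_M$. The maps $f_n := \psi_{M_n} \circ (\psi_M)^{-1} \colon B_{r'/2}(o_M) \to M_n$ are pointed embeddings with $f_n^* g_{M_n} = (\psi_M^{-1})^*(\psi_{M_n}^* g_{M_n})$ converging $C^\infty$ to $g_M$ on $B_{r'/2}(o_M)$, so by Lemma \ref{coordinatefreeconvergence} we get smooth convergence on the $r'/2$-ball; to get it on all balls $B_R(o_M)$ for arbitrary $R$, I would promote these small-scale embeddings to large-scale ones by propagating along geodesics using the exponential maps — since the metrics agree to high order near $o_M$ and both manifolds are complete with the uniform geometry bounds of $\manifolds$, the comparison of exponential maps (standard ODE-perturbation estimates, as already invoked in Lemma \ref{injectivitysemicontinuity}) yields embeddings $\tilde f_n\colon B_R(o_M)\to M_n$ with $\tilde f_n^* g_{M_n} \to g_M$ in $C^\infty$. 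Conversely, if $M_n \to M$ smoothly, then in particular there are pointed embeddings on $B_r(o_M)$ with $C^\infty$-convergent pullback metrics; composing with $\psi_M$ and extracting from $O(d)$ one recovers $\psi_{M_n}^* g_{M_n} \to \psi_M^* g_M$ in $C^\infty$ on $\overline{B_{r'/2}}$ up to an orthogonal rotation (here one uses that a pointed isometry-up-to-error between normal charts must, to leading order, be a rotation, since both send geodesics through the basepoint to geodesics), hence $\rho(M_n,M)\to 0$. Combining both directions, $\rho$ induces exactly smooth convergence, so smooth convergence on $\manifolds$ is topologizable.

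I expect the main obstacle to be the bookkeeping around the $O(d)$-ambiguity in the normal frame together with the passage from small-scale to large-scale embeddings. The $O(d)$ issue is what makes a naive ``compare the coefficient functions'' metric fail to be well-defined on isometry classes, and handling it cleanly requires the compactness of $O(d)$ and a diagonal-subsequence argument; the small-to-large-scale step requires the exponential-map comparison estimates, which are routine in spirit (and already used elsewhere in the paper) but need the uniform geometry bounds of $\manifolds$ to be uniform in $n$. Everything else — the pseudometric axioms, the $\rho = 0 \Rightarrow$ isometry step, and the translation between coordinate pullbacks and the coordinate-free formulation — is a direct consequence of Lemmas \ref{christoffelbounds}, \ref{coordinatefreeconvergence}, and \ref{tensorconvergence}.
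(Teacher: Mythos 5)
There is a genuine gap, and it is located exactly where you flagged the ``main obstacle'': the passage from the fixed small scale $r'/2$ to arbitrary balls $B_R(o_M)$. Your metric $\rho$ compares two manifolds only through the pullbacks of their metrics to the \emph{fixed} ball $\overline{B_{r'/2}}$ under a single normal chart at the basepoint. For smooth (non-analytic) metrics, the restriction of a complete manifold to a ball around the basepoint does not determine the manifold: take $M = \R^2$ flat and $N$ a complete surface that is flat on $B_{100}(o_N)$ but has a bump far away; both lie in a common $\manifolds(2,r,\{C_k\})$ and satisfy $\rho(M,N)=0$, yet they are not pointed isometric, and the constant sequence $M_n = N$ does not converge smoothly (nor even Gromov--Hausdorff) to $M$. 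So $\rho$-convergence is strictly weaker than smooth convergence, and the equivalence you need fails. The step that breaks is the ``propagation along geodesics'': the ODE-perturbation comparison of exponential maps (as in Lemma \ref{injectivitysemicontinuity}) only controls geodesics \emph{through a region where the metrics are already known to be $C^\infty$-close}, which here is only $B_{r'/2}$; it gives no information about $g_{M_n}$ versus $g_M$ at distance $R \gg r'$ from the basepoint. Relatedly, your claim that ``two complete Riemannian manifolds isometric on a ball around corresponding points are globally isometric'' is a real-analytic phenomenon and is false in the smooth category, so $\rho$ is only a pseudometric whose vanishing does not imply isometry. (The other ingredients --- well-definedness modulo $O(d)$, the triangle inequality, and the local direction of the equivalence on $B_{r'/2}$ --- are fine.)

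The paper sidesteps all of this by not constructing a metric at all: for every $k$, every $r>0$ and every $\epsilon>0$ it declares the $k$-th order $(r,\epsilon)$-neighborhood of $M$ to be the set of $N$ admitting a pointed embedding $f:B_r(o_M)\to N$ with $\sup_{i\le k}\sup_{B_r(o_M)}|\nabla^i(g_M - f^*g_N)|_{g_M} < \epsilon$, takes the topology generated by all of these, and invokes Lemma \ref{coordinatefreeconvergence}. Because $r$ ranges over all positive radii, this topology sees every scale, which is precisely what your single chart on $B_{r'/2}$ does not. Your construction could in principle be repaired --- e.g.\ by summing over an exhaustion by radii $R_m\to\infty$ a term comparing atlases of normal charts covering $B_{R_m}$, with an infimum over the (compact, by your transition-map bounds) set of combinatorial matchings and rotations --- but that is a substantially heavier argument than the one the lemma requires, and as written the proposal does not establish the statement.
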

\begin{proof}
We define the $k$-th order $(r,\epsilon)$-neighborhood of a manifold $M \in \manifolds$ as the set of $N \in \manifolds$ such that there exists a pointed embedding $f:B_r(o_M) \to N$ satisfying
\[\sup\left\lbrace |\nabla^i (g_M - f^*g_N)(x)|_g: x \in B_r(o_M), i = 0,\ldots, k\right\rbrace \l \epsilon.\]

By Lemma \ref{coordinatefreeconvergence} convergence with respect to the topology on $\manifolds$ generated by all $k$-th order $(r,\epsilon)$-neighborhoods (for all $k \in \N, r \g 0$ and $\epsilon \g 0$) coincides with smooth convergence.
\end{proof}

\subsection{Convergence of tensor fields}

We will now complete the calculations in local coordinates needed for the proof of Lemma \ref{coordinatefreeconvergence}.

Recall that the coefficients of a $(p,q)$ tensor field $T$ on an open set of $\R^{d}$ are the functions
\[x \mapsto T(x)(e_{i_1},\ldots,e_{i_q},e^{i_{q+1}},\ldots, e^{i_{p+q}}).\]

In what follows we use $|T|$ to denote the Euclidean tensor norm and $|T|_g$ to denote the tensor norm coming from a metric $g$.  

The following result characterizes $C^k$ convergence of the coefficients of such a tensor on a compact set in a coordinate invariant manner.

\begin{lemma}[Convergence of tensor fields]\label{tensorconvergence}
 Let $U$ be an open subset of $\R^d$, $g$ a Riemannian metric on $U$, $K$ a compact subset of $U$, and $T_n$ a sequence of $(p,q)$-tensor fields on $U$.  Then the following two statements are equivalent for all $k \ge 0$:
 \begin{enumerate}
  \item The coefficients of $T_n$ and their partial derivatives up to order $k$ converge to $0$ uniformly on $K$.
  \item For each $i = 0,1,\ldots, k$ one has
  \[\lim\limits_{n \to +\infty} \max\left\lbrace |\nabla^i T_n(x) |_g : x \in K \right\rbrace = 0.\]
 \end{enumerate}
\end{lemma}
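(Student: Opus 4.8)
The plan is to prove this by induction on $k$, exploiting the fact that the two relevant norms (Euclidean and $g$-induced) on the space of tensor fields are comparable on the compact set $K$, together with the formula relating the covariant derivative $\nabla$ to the ordinary partial derivative $\partial$ via the Christoffel symbols. First I would dispense with the base case $k = 0$: the tensor norm $|T_n(x)|_g$ is a continuous function of the coefficients of $T_n(x)$ and the coefficients of $g(x)$ and $g^{-1}(x)$ (via the explicit contraction formula $|T|_g^2 = a_{\cdots}a_{\cdots}g^{\cdots}\cdots g_{\cdots}$ given in the text), and on the compact set $K$ the metric coefficients and their inverse are bounded with the inverse bounded away from degeneracy. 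Hence there are constants $0 < c \le C < \infty$ with $c\,|T(x)| \le |T(x)|_g \le C\,|T(x)|$ for all $x \in K$ and all tensors $T$ of type $(p,q)$; this immediately gives the equivalence of ``coefficients of $T_n \to 0$ uniformly on $K$'' with ``$\max_{x\in K}|T_n(x)|_g \to 0$''.

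For the inductive step I would use the coordinate expression
\[
(\nabla T)_{\cdots} = \partial\, T_{\cdots} + (\text{sum of terms } \Gamma \cdot T_{\cdots}),
\]
i.e. in any local chart the coefficients of $\nabla^{i+1}T_n$ are obtained from the coefficients of $\nabla^i T_n$ by applying one partial derivative and adding finitely many terms each of which is a product of a Christoffel symbol (or a partial derivative thereof, when one iterates) with a coefficient of $\nabla^j T_n$ for some $j \le i$. Crucially, on $K$ the Christoffel symbols of $g$ and all their partial derivatives are bounded (they are smooth functions of the bounded coefficients $g_{ij}$, $g^{ij}$ and their derivatives). So I would argue in both directions: (i) if all partial derivatives of the coefficients of $T_n$ up to order $k+1$ tend to $0$ uniformly on $K$, then by the above formula — applied iteratively to express the coefficients of $\nabla^i T_n$ for $i \le k+1$ as polynomial combinations of partial derivatives of $T_n$'s coefficients (up to order $i$) and the (bounded) Christoffel data — the coefficients of each $\nabla^i T_n$ tend to $0$ uniformly, hence by the $k=0$ comparison so does $\max_K |\nabla^i T_n|_g$; (ii) conversely, if $\max_K|\nabla^i T_n|_g \to 0$ for $i = 0,\dots,k+1$, I solve the same relation for the top-order partial derivative: $\partial\,(\text{coeffs of }\nabla^i T_n) = (\text{coeffs of }\nabla^{i+1}T_n) - (\text{bounded}\cdot\text{lower-order coeffs})$, and by induction the lower-order partial derivatives of $T_n$'s coefficients already converge to $0$, while the coefficients of $\nabla^{i+1}T_n$ converge to $0$ by the comparison of norms; peeling off one order at a time yields that all partial derivatives of $T_n$'s coefficients up to order $k+1$ converge uniformly to $0$ on $K$. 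A minor technical point I would handle is that to run this cleanly on all of $K$ one covers $K$ by finitely many charts (or simply works in the standard coordinates on $U \subset \R^d$, in which case a single chart suffices and the Christoffel symbols of $g$ and their derivatives are genuinely bounded on $K$ by compactness).

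The main obstacle is purely bookkeeping: writing out precisely how the coefficients of $\nabla^i T_n$ expand as a sum of a single ``leading'' term (an $i$-fold partial derivative of a coefficient of $T_n$) plus ``error'' terms that are bounded combinations of strictly-lower-order partial derivatives of $T_n$'s coefficients against Christoffel symbols and their derivatives — and then inverting this triangular system to recover the partial derivatives of $T_n$. There is no analytic difficulty; the content is that $\nabla = \partial + (\text{bounded zeroth-order operator on }K)$ and iterating this gives a bounded invertible change of ``coordinates'' between the jet of coefficients and the jet of covariant derivatives, uniformly over the sequence. I would present the induction compactly, invoking the boundedness of the Christoffel data on compact sets and the equivalence of norms, and omit the explicit multi-index combinatorics.
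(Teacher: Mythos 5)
Your proposal is correct and follows essentially the same route as the paper's proof: equivalence of the Euclidean and $g$-tensor norms on the compact set $K$ for the base case, then induction on $k$ using the triangular relation between coefficients of $\nabla^i T_n$ and partial derivatives of the coefficients of $T_n$, with the Christoffel symbols and their derivatives bounded on $K$. The only detail worth making explicit is that the norm comparison must be stated for tensors of type $(p,q')$ with $q \le q' \le q+k$, since $\nabla^i T_n$ has $q+i$ covariant slots, but this follows from the same compactness argument.
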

\begin{proof}
Since $K$ is compact and the metric coefficients and Christoffel symbols are smooth there exist constants $C \ge 1$ and $\Gamma \g 0$ such that 
\begin{enumerate}
 \item The absolute value of the derivatives of the Christoffel symbols up to order $k$ are bounded by $\Gamma$ on $K$.
 \item For any tensor field $T$ of type $(p,q')$ with $q \le q' \le q+k$ one has
 \[C^{-1}|T(x)| \le |T(x)|_g \le C|T(x)|\]
 for all $x \in K$.
\end{enumerate}

Notice that, by the existence of the constant $C$ above, if $T_n$ is a sequence of $(p,q')$-tensor fields with $q \le q' \le q+k$ then $|T_n(x)|$ converges to $0$ uniformly on $K$ if and only if $|T_n(x)|_g$ does.  On the other hand $|T_n(x)|$ is the square root of the sum of squares of the coefficients of $T_n$ which implies that both the previous statements are equivalent to the uniform convergence of the coefficients to $0$ on $K$.

In particular, this establishes the case $k = 0$ of the lemma.  We will prove the lemma by induction on $k$ but first we must establish some basic properties of the coefficients of $\nabla^iT_n$.

For this purpose, assuming that $T$ is a $(p,q')$-tensor field, observe that the coefficients of $\nabla T$ are obtained from the equation
\begin{equation*}\label{tensorcovariantderivative}
\nabla T(Y,X_1,\ldots,X_{p+q'}) = \nabla_Y T(X_1,\ldots,X_{p+q'}) - \sum\limits_{i = 1}^{p+q'}T(X_1,\ldots, \nabla_YX_i,\ldots,X_{p+q'}) 
\end{equation*}
by substituting elements of the canonical basis for $Y,X_1,\ldots,X_{q'}$ and elements of the dual basis for $X_{q'+1},\ldots,X_{p+q'}$.

The first term above is simply the derivative in the direction of the basis vector $Y$ of a coefficient of $T$ while the other terms are products of the coefficients of $T$ with Christoffel symbols.

By induction one can establish that for each $i$ one has
\begin{enumerate}
 \item Each coefficient of $\nabla^{i}T$ is the sum of one $i$-th order partial derivative of a coefficient of $T$ with products of lower order partial derivatives coefficients of $T$ with partial derivatives of the Christoffel symbols of order less than or equal to $i$.
 \item Every partial derivative of order $i$ of each coefficient of $T$ appears in at least one of the aforementioned sums.
\end{enumerate}

Now assume that our lemma is true for $k-1$.

If $|\nabla^i T_n|_g$ converges to $0$ uniformly on $K$ for each $i \le k$ then by the induction hypothesis the partial derivatives of the coefficients of $T_n$ up to order $k-1$ converge uniformly to $0$ on $K$.  Using the properties of $\nabla^kT_n$ established above and the bound $\Gamma$ on partial derivatives of the Christoffel symbols it follows that the $k$-th order partial derivatives of the coefficients of $T_n$ converge to $0$ uniformly on $K$ as well.

Similarly if the partial derivatives up to order $k$ of the coefficients of $T_n$ converge to $0$ uniformly on $K$ then by the properties of $\nabla^iT_n$ established above and the bounds on the Christoffel factors one obtains that the coefficients of $\nabla^iT_n$ converge to $0$ uniformly for each $i \le k$ on $K$.   This implies (using the constant $C$ defined above) the claim on $|\nabla^iT_n|_g$.
\end{proof}

\section{Bounded geometry of leaves}\label{leafgeometrysection}

We now verify that the leaves of a compact foliation have uniformly bounded geometry.  This was used in the proof of Theorem \ref{foliationcompactnesstheorem}.

\begin{lemma}\label{leafgeometry}
If $X$ is a compact $d$-dimensional foliation then there exists $r \g 0$ and a sequence $\lbrace C_k: k \ge 0\rbrace$ such that all leaves belong to the space $\manifolds\left(d,r,\lbrace C_k \rbrace\right)$.
\end{lemma}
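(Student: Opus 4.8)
The statement asserts uniform bounds on injectivity radius and on all covariant derivatives of curvature for the leaves of a compact foliation. The plan is to obtain these bounds locally, using the foliated parametrizations $h:\R^d\times T\to U$ supplied by the definition of a foliation, and then to patch the local bounds together using compactness of $X$.

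First I would fix a point $x\in X$ and a foliated parametrization $h:\R^d\times T\to U$ with $x\in U$. For each $t\in T$ the map $p\mapsto h(p,t)$ pulls the leaf metric back to a Riemannian metric $g_t$ on $\R^d$, and by the second axiom in the definition of a foliation, $t\mapsto g_t$ is continuous in the $C^\infty$ topology of uniform convergence on compact subsets of $\R^d$. Since $T$ is Polish it need not be locally compact, so I cannot immediately extract uniform bounds over all of $T$; instead I would restrict attention to a point $x$ together with a \emph{compact} neighborhood. Concretely, cover $X$ by finitely many sets $U_1,\dots,U_m$ which are images of foliated parametrizations $h_j:\R^d\times T_j\to U_j$ and, shrinking, arrange that each $x\in X$ lies in some $h_j\big(\overline{B_1}(0)\times K_j\big)$ for a compact $K_j\subset T_j$ with $h_j$ still defined on a neighborhood of $\overline{B_2}(0)\times K_j$. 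Over the compact parameter set $K_j$ the family of metrics $\{g_t^{(j)}: t\in K_j\}$ on $\overline{B_2}(0)$ is $C^\infty$-bounded: continuity of $t\mapsto g_t^{(j)}$ into the (metrizable) $C^\infty$ topology on $\overline{B_2}(0)$, together with compactness of $K_j$, gives for every $k$ a uniform bound on $|\partial^\alpha (g_t^{(j)})_{pq}|$ for $|\alpha|\le k$, and also a uniform lower bound on the smallest eigenvalue of $(g_t^{(j)})_{pq}$ on $\overline{B_2}(0)$ (the eigenvalue is a continuous positive function on the compact set $\overline{B_2}(0)\times K_j$).

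Next I would convert these coefficient bounds into the geometric bounds claimed. The curvature tensor and all its covariant derivatives, expressed in the coordinates supplied by $h_j$, are universal smooth functions of the $g_{pq}$, the inverse $g^{pq}$, and finitely many of their partial derivatives (as recalled in Section~\ref{closedness}); since matrix inversion is smooth on the compact set of matrices with eigenvalues bounded away from $0$ and $\infty$ determined above, the uniform coefficient bounds yield, for each $k$, a constant $C_k$ with $|\nabla^k R|_{g_t^{(j)}}\le C_k$ on $\overline{B_1}(0)$, uniformly over $t\in K_j$ and over $j=1,\dots,m$. For the injectivity radius I would argue that a uniform $C^2$ bound on the metric coefficients on $\overline{B_2}(0)$, together with the uniform two-sided eigenvalue bound (hence uniform comparison of $g_t^{(j)}$-balls with Euclidean balls) and a uniform sectional curvature bound, forces a uniform lower bound $r>0$ on the injectivity radius of the leaf at points of $h_j(\overline{B_1}(0)\times\{t\})$: one has a definite Euclidean ball inside the chart on which the metric is uniformly controlled, so no geodesic loop shorter than some fixed length can exist and conjugate points cannot occur before a fixed distance (one may invoke the standard lower bound on injectivity radius in terms of a curvature bound and the length of the shortest closed geodesic, Klingenberg's estimate). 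Since every point of $X$ lies in some $h_j(\overline{B_1}(0)\times K_j)$, taking the minimum of the finitely many $r$'s and, for each $k$, the maximum of the finitely many $C_k$'s gives the desired uniform $(r,\{C_k\})$, so every leaf lies in $\manifolds(d,r,\{C_k\})$.

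\textbf{Main obstacle.} The delicate point is that $T$ is only Polish, not locally compact, so continuity of $t\mapsto g_t$ alone does not give uniform estimates; the whole argument hinges on first using compactness of $X$ to reduce to finitely many charts and, within each, to a \emph{compact} transversal parameter set $K_j$ over which the $C^\infty$-continuous family of metrics is automatically uniformly bounded. The second subtlety is the injectivity radius lower bound: curvature and metric bounds alone control conjugate points but not closed geodesics, so one must additionally use that each leaf point sits at the center of a coordinate ball of definite Euclidean size on which the metric is uniformly bi-Lipschitz to the Euclidean one, ruling out short geodesic loops; I expect this is where the most care is needed in writing out the details.
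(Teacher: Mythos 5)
Your proposal is correct, and for the curvature bounds it is essentially the paper's argument: the paper observes that $|\nabla^k R|$ is, in a foliated chart, a continuous function of the metric coefficients, their inverses and finitely many partial derivatives, hence (by axiom 2 of the definition of a foliation) a continuous function on the compact space $X$, which therefore attains a maximum $C_k$; your chart-by-chart version with compact transversal sets $K_j$ is the same idea, and your use of local compactness of $T$ is legitimate (the paper derives exactly this fact, from compactness of $X$, in the proof of Lemma \ref{adapteddistance}). For the injectivity radius the two routes genuinely differ. The paper uses continuous dependence of ODE solutions on the vector field to show that $(y,s)\mapsto \exp_{y,s}$ is continuous into the $C^1$ topology, so near each $(x,t)$ the differential of $\exp_{y,s}$ stays within $\tfrac12$ of the identity on a uniform ball $B_{2r}(0)$; a contraction-mapping (quantitative inverse function theorem) argument then makes the exponential map injective there, yielding the lower bound directly, with no mention of conjugate points or geodesic loops. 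You instead decompose the injectivity radius via the pointwise Klingenberg lemma into the conjugate radius (controlled by the sectional curvature bound) and half the length of the shortest geodesic loop at the point, and exclude short loops using the uniformly controlled coordinate ball. Both are valid; the paper's is more self-contained, while yours imports a standard black box and shifts the work to the no-short-loops step. Note that for that step the two-sided eigenvalue (bi-Lipschitz) bound alone is not enough — a bi-Lipschitz metric can in principle have wildly turning geodesics — and you really need the uniform $C^1$ control of the coefficients, i.e.\ bounded Christoffel symbols, to conclude that a unit-speed geodesic has nearly constant velocity over a definite time and so cannot return to its starting point; you have that bound available and you flag this as the delicate point, so this is a difference of route rather than a gap. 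One small citation correction: in the pointed, possibly non-compact setting you want Klingenberg's lemma in the form involving the shortest geodesic \emph{loop based at the point}, not the shortest closed geodesic of the manifold.
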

\begin{proof}
We have shown in Section \ref{closedness} that the norm of the $k$-th covariant derivative of the curvature tensor is a continuous function of the metric coefficients, the coefficients of the inverse matrix, and a finite number of their partial derivatives.  This implies (by looking at the leaf metrics in a foliated chart) that this norm is continuous on $X$ and hence has a global maximum $C_k$.

Let $h:\R^d \times T \to U \subset X$ be a foliated parametrization and for each $t \in T$ let $g_t$ be the Riemannian metric on $\R^d$ obtained by pullback under $x \mapsto h(x,t)$.

Let $\exp_{x,t}$ denote the exponential map of the metric $g_t$ at $x$ (i.e. $\exp_{x,t}:\R^d \to \R^d$ with $\exp_t(v) = \alpha(1)$ where $\alpha$ is the $g_{t}$-geodesic satisfying $\alpha(0) = x$ and $\alpha'(0) = v$).

By continuity of the solution to an ordinary differential equation with respect to the vector field (see \cite[Theorem B3, pg. 333]{duistermaat-kolk2000}) one has that $(x,t) \mapsto \exp_{x,t}$ is continuous when the codomain is endowed with the topology of $C^k$ convergence on compact subsets of $\R^d$.

In particular each $(x,t) \in \R^d \times T$ has a neighborhood $U$ on which there is a radius $r \g 0$ such that the operator norm of the difference between the differential of $\exp_{y,s}$ and the identity is less than $\frac{1}{2}$ at all points in $B_{2r}(0)$ for all $(y,s) \in U$.  This implies that $\exp_{y,s} - z$ is a contraction mapping $B_{2r}(0)$ into itself for all $z \in B_{r}(y)$.  In particular the injectivity radius of $g_s$ at $y$ is at least $r$.

It follows that each point $x \in X$ has a neighborhood on which there is a uniform positive lower bound for the leafwise injectivity radius at each point.  Covering $X$ by a finite number of these open sets one obtains that there is a global positive lower bound for the injectivity radius of all leaves.
\end{proof}

\section{Covering spaces and holonomy}\label{holonomysection}

In this section we recall some basic facts on Riemannian coverings and provide the definitions and results on holonomy which are relevant for Theorem \ref{semicontinuitytheorem}.

\subsection{Riemannian coverings}

By a Riemannian covering of a pointed complete connected Riemannian manifold $(M,o,g)$ we mean a pointed local isometry $f:N \to M$ from some pointed complete connected Riemannian manifold $(N,o_N,g_N)$ to $M$.  We sometimes omit the function $f$ and simply say that $N$ is a Riemannian covering of $M$ (meaning there exists at least one suitable $f$).

Any covering space $N$ in the sense of \cite[Chapter 1.3]{hatcher2002} can be made a Riemannian covering by constructing local charts which are compositions of the covering map and the local charts of the covered manifold $M$.  Reciprocally any Riemannian covering satisfies the `pile of disks' property for the preimage of balls of radius smaller than half the injectivity radius of $M$.

We recall that the fundamental group $\pi_1(M)$ of $(M,o,g)$ is the group of (endpoint fixing) homotopy classes of closed curves starting and ending at the basepoint $o$.  Any covering $f:N \to M$ induces a morphism $f_*$ from $\pi_1(N)$ to $\pi_1(M)$.

With these observations we restate \cite[Theorem 1.38]{hatcher2002} and the comment immediately after about ordering covering spaces as we shall use them.

\begin{lemma}[Classification of covering spaces]\label{coverings}
Let $(M,o,g)$ be a pointed complete Riemannian manifold.  For each subgroup $H$ of $\pi_1(M)$ there exists a Riemannian covering $f:N \to M$ with $f_*(\pi_1(N)) = H$ and this covering space is unique up to pointed isometries.  If two Riemannian coverings $N$ and $N'$ correspond to subgroups $H \subset H'$ then $N$ is a Riemannian covering of $N'$.
\end{lemma}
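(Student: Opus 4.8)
The plan is to reduce the statement to the topological classification of covering spaces \cite[Theorem 1.38]{hatcher2002} by transporting the Riemannian structure along covering maps. Since $M$ is a connected manifold it is path-connected, locally path-connected, and semilocally simply connected, so the topological classification applies to it with no further hypotheses. Throughout I will use the two observations already recorded before the statement: that any topological covering space of $M$ inherits a Riemannian covering structure by pulling back charts, and conversely that any Riemannian covering has the `pile of disks' property over balls of radius less than half the injectivity radius of $M$, hence is in particular a topological covering to which \cite[Theorem 1.38]{hatcher2002} applies.

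First I would construct the covering associated to a subgroup $H \subset \pi_1(M)$. Applying \cite[Theorem 1.38]{hatcher2002} to $M$ with basepoint $o$ yields a pointed covering space $p:(N,o_N) \to (M,o)$ with $p_*(\pi_1(N,o_N)) = H$. The space $N$ is a topological manifold and $p$ is a local homeomorphism, so there is a unique smooth structure on $N$ making $p$ a local diffeomorphism; set $g_N = p^*g$, which is then a smooth Riemannian metric on $N$ for which $p$ is by construction a local isometry. To see that $(N,g_N)$ is complete, observe that any maximal $g_N$-geodesic projects under the local isometry $p$ to a $g$-geodesic, which is defined on all of $\R$ because $M$ is complete; lifting this projected geodesic via the path-lifting property of $p$ and using that the lift is again a geodesic (local isometry), the original geodesic extends to all of $\R$, so by Hopf--Rinow $(N,g_N)$ is complete. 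Hence $f = p:(N,o_N,g_N) \to (M,o,g)$ is a Riemannian covering in our sense with $f_*(\pi_1(N)) = H$.

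Next I would address uniqueness and the ordering property, both of which follow by the same device. If $f:(N,o_N,g_N) \to (M,o,g)$ and $f':(N',o_{N'},g_{N'}) \to (M,o,g)$ are Riemannian coverings with $f_*(\pi_1(N)) = f'_*(\pi_1(N')) = H$, then the uniqueness part of \cite[Theorem 1.38]{hatcher2002} provides a pointed homeomorphism $\varphi:N \to N'$ with $f'\circ\varphi = f$. Being the lift of the smooth map $f$ through the local diffeomorphism $f'$, the map $\varphi$ is smooth, and $\varphi^*g_{N'} = \varphi^*(f')^*g = (f'\circ\varphi)^*g = f^*g = g_N$, so $\varphi$ is a pointed isometry. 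Likewise, if $H \subset H'$ correspond to $f:N\to M$ and $f':N'\to M$, then \cite[Theorem 1.38]{hatcher2002} gives a pointed covering map $q:N \to N'$ with $f'\circ q = f$; as before $q$ is smooth and $q^*g_{N'} = f^*g = g_N$, so $q$ is a local isometry between complete connected pointed Riemannian manifolds, i.e.\ a Riemannian covering, proving that $N$ is a Riemannian covering of $N'$.

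I expect the only genuine subtlety to be the bookkeeping that makes the topological statements interact correctly with the metric structure: equipping the abstract covering space with a compatible smooth structure, verifying completeness of the pulled-back metric, and checking that the comparison maps produced by \cite[Theorem 1.38]{hatcher2002} are smooth (hence isometries) rather than merely continuous. None of these is deep, but they are precisely the places where the passage from the topological category to the Riemannian one must be argued rather than quoted.
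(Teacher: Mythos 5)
Your proposal is correct and follows essentially the same route as the paper, which simply presents this lemma as a restatement of \cite[Theorem 1.38]{hatcher2002} combined with the preceding observations on passing between topological and Riemannian coverings. You have merely written out the details the paper leaves implicit (the induced smooth structure, completeness of the pullback metric via Hopf--Rinow, and smoothness of the comparison maps), all of which are the right points to check.
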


The Riemannian covering associated to the trivial subgroup of $\pi_1(M)$ is the universal covering which we denote by $\widetilde{M}$.

\subsection{Holonomy covering}

Let $X$ be a foliation and $h_i:U_i \to \R^d \times T_i$ (where $i = 1,2$) be foliated charts.

The charts $h_1,h_2$ are said to be compatible if there exists a homeomorphism $\psi: h_1(U_1 \cap U_2) \to h_2(U_1 \cap U_2)$ such that
\[h_2\circ h_1^{-1}(x,t) = (\varphi(x,t),\psi(t))\]
for a certain (automatically smooth with respect to $x$) function $\varphi$ and all $(x,t)$ with $h_1(x,t) \in U_1 \cap U_2$.  The map $\psi$ is called the holonomy from $h_1$ to $h_2$.

Notice that the Vinyl record foliation of Section \ref{vinylsection} can't be covered by only two compatible charts.

By a chain of compatible foliated charts we mean a finite sequence $h_i: U_i \to \R^d \times T_i$ indexed on $i=0,1,\ldots,r$ of foliated charts such that $U_i$ intersects $U_{i+1}$ and $h_i$ is compatible with $h_{i+1}$ for all $i=0,\ldots,r-1$.  The chain is closed if $h_r = h_0$.  The holonomy of the chain is the map $\psi_{r-1,r}\circ \cdots \circ \psi_{0,1}$ where $\psi_{i,i+1}$ is the holonomy from $h_i$ to $h_{i+1}$ and we assume the maximal possible domain for the composition.

A leafwise curve is a continuous function $\alpha:[0,1] \to X$ whose image is contained in a single leaf.  We say $\alpha$ is covered by a compatible chain of foliated charts $\lbrace h_i, i=0,\ldots,r\rbrace$ if there exists a finite sequence $t_0 = 0 \l \cdots \l t_r = 1$ such that $\alpha([t_i,t_{i+1}]) \subset U_i$ for all $i = 0,\ldots,r-1$ where $U_i$ is the domain of $h_i$.

A closed leafwise curve $\alpha:[0,1] \to X$ is said to have trivial holonomy if there exists a closed chain of compatible charts $\lbrace h_i: i=0,\ldots,r\rbrace$ covering $\alpha$ whose holonomy map is the identity on a neighborhood of $t \in T$ where $t$ is the second coordinate of $h_0(\alpha(0))$.

The holonomy covering $\widetilde{L_x}^{\holonomy}$ of a leaf $L_x$ is defined as the Riemannian covering corresponding (via Lemma \ref{coverings}) to the subgroup $H$ of homotopy classes of closed curves based at $x$ in $L_x$ which have trivial holonomy.

To show that this is well defined we must prove that:
\begin{enumerate}
 \item Each closed leafwise curve admits a covering by a compatible closed chain of foliated charts.
 \item The property of having trivial holonomy doesn't depend on the choice of covering.
 \item The property of having trivial holonomy is invariant under homotopy.
\end{enumerate}

A leaf $L_x$ is said to have trivial holonomy if $L_x$ is isometric to its holonomy cover (equivalently all leafwise closed curves based at $x$ have trivial holonomy).  The fact that this property doesn't depend on the basepoint $x$ follows from Lemma \ref{freehomotopy} below, this lemma also covers item 3 in the above list and shows that the holonomy cover is a normal covering space (although we will not use this fact).

\subsection{Trivial holonomy}

In this subsection we verify the claims necessary for the definition of the holonomy covering of a leaf.  We also provide a characterization of trivial holonomy which was used in the proof of Theorem \ref{semicontinuitytheorem}.

Recall that an atlas of a foliation is simply a collection of foliated charts whose domains cover the foliation.  We say one atlas refines another if the domain of each chart of the former is contained in the domain of some chart of the later.  We call an atlas consisting of pairwise compatible charts `admissible'.

\begin{lemma}\label{admissiblerefinement}
Every atlas of a compact foliation has an admissible refinement.
\end{lemma}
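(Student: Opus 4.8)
The plan is to pass to a finite shrunken subcover carrying a positive lower bound $\ell_0$ for the leafwise distance a leaf must travel in order to return to the complement of one of the original charts, and then to refine each chart into small ``product boxes'' all of whose plaques are shorter than $\ell_0$; this length bound is exactly what will force every change of coordinates between two overlapping boxes to split as $(x,t)\mapsto(\varphi(x,t),\psi(t))$. Concretely, since $X$ is a compact metric space the given atlas has a finite subcover $\{h_i\colon\R^d\times T_i\to U_i\}_{i=1}^N$, which is already a refinement of it; since $X$ is compact, hence normal, the shrinking lemma produces an open cover $\{V_i\}_{i=1}^N$ with $\overline{V_i}\subset U_i$ for each $i$, and as $\overline{V_i}$ is compact and disjoint from the closed set $X\setminus U_i$ we may set
\[\ell_0:=\min_{1\le i\le N} d_X\!\left(\overline{V_i},\,X\setminus U_i\right)>0 .\]

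\emph{Construction of the refinement.} For each $p\in X$ I would fix an index $i=i(p)$ with $p\in V_i$, write $h_i^{-1}(p)=(y_p,t_p)$, and choose a Euclidean ball $P_p=B_{\rho_p}(y_p)\subset\R^d$ and an open neighbourhood $S_p\ni t_p$ in $T_i$ so small that (a) $h_i(P_p\times S_p)\subset V_i$, and (b) for every $t\in S_p$ any two points of the plaque $h_i(P_p\times\{t\})$ are joined, inside that plaque, by a leafwise curve of length less than $\ell_0$. Property (b) is arranged by first shrinking $\rho_p$ so that every straight segment of $P_p$ has $g_{t_p}$-length below $\ell_0/2$, and then using the smooth convergence $g_t\to g_{t_p}$ on $\overline{P_p}$ (condition 2 in the definition of a foliated parametrization) to choose $S_p$. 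After identifying $P_p$ with $\R^d$ by a diffeomorphism, $W_p:=h_i(P_p\times S_p)$ equipped with the restricted and reparametrized chart is a genuine foliated chart (conditions 1 and 2 are inherited), and $W_p\subset V_{i(p)}\subset U_{i(p)}$. The family $\{W_p:p\in X\}$ then covers $X$ and refines the original atlas.

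\emph{Admissibility --- the crux.} It remains to check that any two of these charts with overlapping domains are compatible. Given $W_p\cap W_q\neq\emptyset$, pick $z\in W_p\cap W_q$ and let $\mathcal P_p(z),\mathcal P_q(z)$ be the plaques through $z$ in $W_p$, $W_q$ respectively. I claim $\mathcal P_p(z)\cap W_q\subset\mathcal P_q(z)$. If not, some $b\in\mathcal P_p(z)\cap W_q$ lies in a plaque of $W_q$ other than $\mathcal P_q(z)$; joining $z$ to $b$ inside $\mathcal P_p(z)$ by the image of a straight segment of $P_p$ gives a leafwise curve of length $<\ell_0$ whose endpoints lie in two distinct plaques of $W_q$, hence --- plaques of $W_q$ at different transverse levels sitting inside plaques of $U_{i(q)}$ at different transverse levels --- in two distinct plaques of $U_{i(q)}$. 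But a leafwise curve joining two distinct plaques of $U_{i(q)}$ must leave $U_{i(q)}$, so its length is at least $d_X(z,X\setminus U_{i(q)})\ge d_X(\overline{V_{i(q)}},X\setminus U_{i(q)})\ge\ell_0$, a contradiction. Thus for every transverse level $t$ of $W_p$ the set $h_{i(p)}(P_p\times\{t\})\cap W_q$ lies in a single plaque of $W_q$, which in coordinates (ignoring the harmless reparametrizations of the $\R^d$-factors) says precisely that $h_{i(q)}\circ h_{i(p)}^{-1}$ has the form $(x,t)\mapsto(\varphi(x,t),\psi(t))$ on $h_{i(p)}(W_p\cap W_q)$; the transverse part $\psi$ is injective by the symmetric statement (two levels of $W_p$ mapping to one level of $W_q$ would force two plaques of $W_p$ into one plaque of $W_q$), is defined on and maps onto open subsets of the transversals, and is a homeomorphism because it and its inverse are locally restrictions of the continuous change of coordinates. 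Hence $W_p$ and $W_q$ are compatible.

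I expect the two reductions and the box construction to be routine; the heart of the matter is recognizing that $\ell_0$ is the right quantity and that making every plaque of the refinement shorter than $\ell_0$ is exactly what rules out a plaque of one chart meeting several plaques of another --- the only way compatibility could fail. One point to be careful about is that the transversals $T_i$ need not be locally compact, so the shrinking should be carried out abstractly (producing open $V_i$ with compact closure inside $U_i$) rather than by forming product boxes with precompact transverse factors.
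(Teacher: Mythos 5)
Your overall architecture (finite subcover, shrinking, then restriction of each chart to small product boxes, with compatibility checked on overlaps) matches the paper's, but your mechanism for compatibility is genuinely different: the paper never measures anything, it simply restricts $h_1$ near $x$ to a box $h_1^{-1}(D\times S)$ contained in $U_1\cap U_2$, so that each plaque of the restricted chart is a \emph{connected} subset of the leaf lying entirely inside $U_2$; since the plaques of $h_2$ are disjoint leaf-open sets, connectedness alone forces the restricted plaque into a single plaque of $h_2$ and the transition map splits. Your quantitative route via $\ell_0$ is designed to handle the harder situation where a plaque of $W_p$ could exit $U_{i(q)}$ and re-enter a different plaque, and the injectivity argument for $\psi$ by symmetry is a nice touch.

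There is, however, one concrete gap. In the crux you bound the leafwise Riemannian length of the segment joining $z$ to $b$ from below by $d_X\bigl(z,\,X\setminus U_{i(q)}\bigr)$, and you defined $\ell_0$ using the ambient metric $d_X$. This tacitly assumes that the length of a leafwise curve dominates the $d_X$-distance between its endpoints, i.e.\ that $d_X\le\leafdistance$. In the setting of this paper a foliation is an abstract metric space whose leaves are merely \emph{continuously} and injectively immersed Riemannian manifolds; no inequality between $d_X$ and the leafwise metrics is part of the axioms (one can shrink the ambient metric while keeping the leaf metrics fixed), and producing a distance with exactly this property is the content of Lemma \ref{adapteddistance}, which requires a nontrivial construction. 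The gap is repairable: replace $d_X$ by an adapted distance $d$ (Lemma \ref{adapteddistance} is proved independently of the present lemma, so there is no circularity) both in the definition of $\ell_0$ and in the final estimate; with that substitution your contradiction $\ell_0\le\text{length}<\ell_0$ is legitimate. Alternatively, you can dispense with lengths altogether by shrinking each box so that it is contained in every $U_j$ it meets among the finitely many charts, and then run the paper's connectedness argument.
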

\begin{proof}
Let $A$ be an atlas of a compact foliation $X$.  Since $X$ is compact we may take a finite subatlas $B$ of $A$.  

Let $h:U \to \R^d \times T$ be a chart in $B$.  Given an open ball $D \subset \R^d$ and an open subset $S \subset T$ we may construct a chart $g:h^{-1}(D \times S) \to \R^d \times S$ by letting $g(x) = f(h(x))$ where $f$ acts as the identity on the second coordinate and a fixed diffeomorphism between $D$ and $\R^d$ on the second.  We call any such chart a restriction of $h$ and note that any two restrictions of the same chart are compatible.

Now let $h_i:U_i \to \R^d \times T_i$ be charts in $B$ for $i=1,2$.  Even if these charts aren't compatible the fact that they are foliated charts implies that each point $x$ in $U_1 \cap U_2$ has a neighborhood $V = h_1^{-1}(D \times S)$ where $D \subset \R^d$ is an Euclidean open ball and $S$ is an open subset of $T_1$, such that
\[h_2 \circ h_1^{-1}(y) = (\varphi(y,t),\psi(t))\]
on $h_1(V)$, where $\psi$ is a homeomorphism between certain open sets in $T_1$ and $T_2$.  This implies that restricting $h_1$ to $V$ one obtains a chart which is compatible with any restriction of $h_1$ or $h_2$.

Since there are only finitely many charts we may choose for each point $x$ a neighborhood, and a restriction of a certain chart in $B$ to this neighborhood which will be compatible with (the restrictions of) all charts in $B$.   The collection of such charts gives a compatible refinement $C$ of $A$.
\end{proof}

From the above result it follows that any closed curve has a covering by a compatible chain of foliated charts.

We now establish the fact that having trivial holonomy doesn't depend on the choice of covering.  We recall that the plaques of a foliated chart $h:U \to \R^d \times T$ are the sets of the form $h^{-1}(\R^d \times \lbrace t \rbrace)$.

\begin{lemma}
If $\alpha$ is a leafwise closed curve in a compact foliation $X$.  Then $\alpha$ has trivial holonomy if and only if for each sequence $\alpha_n$ of (possibly non-closed) leafwise curves which converge uniformly to $\alpha$ and any foliated chart $h:U \to \R^d \times T$ where $\alpha(0) \in U$ one has that $\alpha_n(0)$ and $\alpha_n(1)$ belong to the same plaque for $n$ large enough.
\end{lemma}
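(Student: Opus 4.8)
The plan is to fix once and for all a compatible chain of foliated charts $\lbrace h_i : U_i \to \R^d \times T_i\rbrace_{i=0}^r$ with $h_r = h_0$ which covers $\alpha$ via a partition $0 = t_0 \l \cdots \l t_r = 1$ chosen fine enough that each $\alpha([t_i,t_{i+1}])$ lies in a single plaque $P_i$ of $h_i$; such a chain exists by Lemma \ref{admissiblerefinement}, the remark following it, and the fact that plaques, being images of injective immersions between equidimensional manifolds, are open in their leaves, so that connected pieces of $\alpha$ lie in single plaques. Writing $\psi = \psi_{r-1,r}\circ\cdots\circ\psi_{0,1}$ for the holonomy of this chain and $\tau_0 \in T_0$ for the transverse coordinate of $\alpha(0)$ in $h_0$, I would first note that since consecutive plaques $P_i$ and $P_{i+1}$ share the point $\alpha(t_{i+1})$ the compatibility relations force $\psi_{i,i+1}(\pi_{T_i}(P_i)) = \pi_{T_{i+1}}(P_{i+1})$; chaining these equalities, the last one closing up because $\alpha(1) = \alpha(0)$, gives $\psi(\tau_0) = \tau_0$. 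By definition $\alpha$ has trivial holonomy exactly when $\psi$ can be taken to be the identity on a neighborhood of $\tau_0$.

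For the forward implication I would assume the chain has $\psi = \mathrm{id}$ near $\tau_0$ and take a sequence $\alpha_n \to \alpha$ converging uniformly in $X$. Since $\alpha([t_i,t_{i+1}])$ is a compact subset of the open set $U_i$ it has positive distance to $X \setminus U_i$, so for $n$ large $\alpha_n([t_i,t_{i+1}]) \subset U_i$; as $\alpha_n$ is a leafwise curve and plaques are open in their leaves, $\alpha_n([t_i,t_{i+1}])$ then lies in a single plaque $P_i^n$ of $h_i$. The points $\alpha_n(t_{i+1})$ lie in $U_i \cap U_{i+1}$ and converge to $\alpha(t_{i+1})$, whose $h_i$-coordinate lies in the open domain of $\psi_{i,i+1}$, so the same plaque-sharing computation gives $\pi_{T_{i+1}}(P_{i+1}^n) = \psi_{i,i+1}(\pi_{T_i}(P_i^n))$ for $n$ large, and hence the $T_0$-coordinate of the plaque of $h_0$ through $\alpha_n(1)$ equals $\psi\big(\pi_{T_0}(h_0(\alpha_n(0)))\big)$. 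Because $\pi_{T_0}(h_0(\alpha_n(0))) \to \tau_0$ and $\psi$ is the identity there, for $n$ large $\alpha_n(0)$ and $\alpha_n(1)$ have the same $T_0$-coordinate, i.e. lie in the same plaque of $h_0$. For an arbitrary foliated chart $h$ with $\alpha(0) \in U$ I would then invoke the local compatibility of $h$ and $h_0$ near $\alpha(0)$ (constructed exactly as in the proof of Lemma \ref{admissiblerefinement}): for $n$ large both $\alpha_n(0)$ and $\alpha_n(1)$ lie in the common neighborhood on which $h$ and $h_0$ are related by a transverse homeomorphism, so ``same plaque of $h_0$'' transfers to ``same plaque of $h$''.

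For the converse I would argue contrapositively. If $\alpha$ does not have trivial holonomy then the holonomy $\psi$ of the chosen chain is not the identity on any neighborhood of $\tau_0$, so there are $\sigma_n \to \tau_0$ in the domain of $\psi$ with $\psi(\sigma_n) \neq \sigma_n$. Setting $\tau_i^n := \psi_{i-1,i}\circ\cdots\circ\psi_{0,1}(\sigma_n)$ (defined for $n$ large since the domains are open), I would build $\alpha_n$ by replacing, on each $[t_i,t_{i+1}]$, the plaque $P_i$ by the nearby plaque $h_i^{-1}(\R^d \times \lbrace \tau_i^n\rbrace)$ — concretely $\alpha_n(s) := h_i^{-1}(x_i(s), \tau_i^n)$ with $x_i(s) := \pi_{\R^d}(h_i(\alpha(s)))$ — after a modification of each $x_i$ near the breakpoints $t_i$, of size $O(\tau_i^n - \tau_i) \to 0$, so that consecutive pieces glue continuously. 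The resulting $\alpha_n$ is a leafwise curve (consecutive plaques share the glued breakpoint, hence lie in one leaf) which converges uniformly to $\alpha$, with $\alpha_n(0)$ on the plaque of $h_0$ of transverse coordinate $\sigma_n$ and, by the compatibility computation of the forward direction, $\alpha_n(1)$ on the plaque of transverse coordinate $\psi(\sigma_n) \neq \sigma_n$; these are distinct plaques of $h_0$, contradicting the hypothesis applied to $h = h_0$.

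I expect the main obstacle to be the construction of $\alpha_n$ in the converse: one must check that the plaque-sliding maps $h_i^{-1}(\cdot,\tau_i) \mapsto h_i^{-1}(\cdot,\tau_i^n)$ are well defined for $n$ large, that the breakpoint corrections can be chosen to keep $\alpha_n$ continuous without spoiling uniform convergence, and that the transverse coordinate of $\alpha_n(1)$ really is $\psi(\sigma_n)$ — all of which amount to careful bookkeeping with the holonomy maps along the chain of the same kind already used in the forward direction.
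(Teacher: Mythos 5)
Your forward implication is essentially the paper's own argument with the bookkeeping made explicit: the paper likewise fixes a covering chain witnessing trivial holonomy, chooses $\epsilon$ so that any leafwise curve $\epsilon$-close to $\alpha$ is covered by the same chain with starting point in the region where $\psi$ is the identity, and concludes; your chart-by-chart tracking of the plaques $P_i^n$ and the transfer to an arbitrary chart $h$ via local compatibility fill in details the paper leaves implicit. The converse is not actually proved in the paper (it is what silently underlies Corollary \ref{trivialholonomycorollary}), so your plaque-sliding construction is genuinely additional content, and it is sound: since the charts are homeomorphisms onto all of $\R^d\times T_i$, the slid pieces $s\mapsto h_i^{-1}(x_i(s),\tau_i^n)$ are defined for every $n$; at a breakpoint the left limit and right value lie on the same plaque of $h_{i+1}$ with $\R^d$-coordinates differing by $|\varphi(x_i(t_{i+1}),\tau_i^n)-\varphi(x_i(t_{i+1}),\tau_i)|\to 0$ by continuity of $\varphi$, so a short in-plaque segment glues them without spoiling uniform convergence, and the endpoint plaques are indeed $\sigma_n$ and $\psi(\sigma_n)$. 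The one step both you and the paper take for granted is that a leafwise curve meets each chart domain in whole plaque-segments: openness of plaques in the leaf is not by itself enough, since a priori the curve is only continuous into $X$; one also uses that a leaf meets a chart in at most countably many plaques, so the transverse coordinate of the curve, being continuous with countable image on each subinterval, is locally constant. With that standard fact granted, your proof is complete modulo the routine gluing you already flagged.
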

\begin{proof}
Observe that if $h_i:U_i \to \R^d \times T_i$ (where $i=1,2$) are compatible foliated charts and $\beta$ is a leafwise curve whose image is contained in $U_1 \cup U_2$ then there exists $t_i \in T_i$ ($i = 1,2$) such that $\beta$ is in the plaque $h_i^{-1}(\R^d \times \lbrace t_i\rbrace)$ whenever it is in $U_i$.  Furthermore $t_2 = \psi(t_1)$ where $\psi$ is the holonomy between the charts.

By definition $\alpha$ is covered by a closed chain of compatible charts $h_i:U_i \to \R^d \times T_i$ where $i = 0,\ldots,r$ and there exist $t_0 = 0 \l \cdots \l t_r = 1$ such that $\alpha([t_i,t_{i+1}]) \subset U_i$ for all $i=0,\ldots,r-1$.  Furthermore the holonomy $\psi$ of the chain is the identity on a neighborhood of the second coordinate of $h_0(\alpha(0))$.

Take $\epsilon \g 0$ such that
\begin{enumerate}
 \item Any leafwise curve $\beta:[0,1] \to X$ at distance less than $\epsilon$ from $\alpha$ one has $\beta([t_i,t_{i+1}]) \subset U_i$ for all $i=0,\ldots,r-1$.  
 \item Any point $p$ at distance less than $\epsilon$ from $\alpha(0)$ is of the form $h_0^{-1}(x,t)$ where $\psi(t) = t$.
\end{enumerate}

It follows that any leafwise curve at uniform distance less than $\epsilon$ from $\alpha$ will start and end in the same plaque.
\end{proof}

The first observation in the above proof yields the following.
\begin{corollary}\label{trivialholonomycorollary}
If $\alpha$ is a leafwise closed curve with trivial holonomy in a compact foliation $X$.  Then any closed chain of compatible charts $h_i:U_i \to \R^d \times T_i$ (where $i =0,\ldots,r$) which covers $\alpha$ has trivial holonomy in a neighborhood of the second coordinate of $h_0(\alpha(0))$.
\end{corollary}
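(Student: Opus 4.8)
Fix a closed chain of compatible charts $h_i : U_i \to \R^d \times T_i$, $i = 0,\ldots,r$, covering $\alpha$, with subdivision $t_0 = 0 < \cdots < t_r = 1$ and $\alpha([t_i,t_{i+1}]) \subset U_i$; write $\psi$ for its holonomy and $t_*$ for the second coordinate of $h_0(\alpha(0))$. The plan is to produce, for every $t$ in a small enough neighborhood of $t_*$, a (generally non-closed) leafwise curve $\alpha_t$ which tracks $\alpha$ plaque by plaque through the chain, with $\alpha_t(0)$ in the plaque $\{t\}$ of $h_0$, with $\alpha_t(1)$ in the plaque $\{\psi(t)\}$ of $h_r = h_0$, and with $\alpha_t \to \alpha$ uniformly as $t \to t_*$. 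Granting this, the characterization of trivial holonomy from the previous lemma closes the argument at once: if $\psi$ were not the identity on some neighborhood of $t_*$ we could choose $s_n \to t_*$ with $\psi(s_n) \neq s_n$, and then the curves $\alpha_{s_n}$ would converge uniformly to $\alpha$ while $\alpha_{s_n}(0)$ and $\alpha_{s_n}(1)$ lie in distinct plaques of $h_0$ for every $n$, contradicting the assumption that $\alpha$ has trivial holonomy.

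To set up the construction I would first apply the observation recalled just before the statement, chart by chart along the chain, to see that $\alpha$ itself runs, on $[t_i,t_{i+1}]$, inside the plaque of $h_i$ with second coordinate $\tau_i$, where $\tau_0 = t_*$ and $\tau_{i+1} = \psi_{i,i+1}(\tau_i)$; closing the chain up this records in particular the (automatic, since $\alpha$ is closed) equality $\psi(t_*) = t_*$, so $\psi$ is defined near $t_*$. For $t$ near $t_*$ one sets $\tau_0(t) = t$ and $\tau_{i+1}(t) = \psi_{i,i+1}(\tau_i(t))$, all defined once $t$ is close enough to $t_*$. The essential point is that the plaque of $h_i$ with coordinate $\tau_i(t)$ and the plaque of $h_{i+1}$ with coordinate $\tau_{i+1}(t) = \psi_{i,i+1}(\tau_i(t))$ coincide over $U_i \cap U_{i+1}$ — this is exactly what the compatibility relation $h_{i+1}\circ h_i^{-1}(x,\tau) = (\varphi(x,\tau),\psi_{i,i+1}(\tau))$ says — so one can route $\alpha_t$ through a transition point $q_{i+1}(t) \in U_i \cap U_{i+1}$ lying in both plaques and near $\alpha(t_{i+1})$, joining consecutive transition points by arcs contained in a single plaque (each plaque being homeomorphic to $\R^d$ via the relevant chart). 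Arranging these choices to depend continuously on $t$ and to reduce at $t = t_*$ to points and arcs $\epsilon$-close to $\alpha$ makes $\alpha_{t_*}$ uniformly $\epsilon$-close to $\alpha$ and $\alpha_t \to \alpha_{t_*}$ uniformly; a routine diagonal choice over $\epsilon = 1/k$ then produces a sequence $\alpha_{s_n} \to \alpha$ uniformly as required above, while the plaque-coordinates of its endpoints are $s_n$ and $\psi(s_n)$ by construction.

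The one genuinely delicate step is this construction of the $\alpha_t$ together with the uniform convergence, and inside it the subtle point is continuity at the subdivision times: the piece of $\alpha_t$ arriving in the plaque $\tau_i(t)$ of $h_i$ and the piece leaving in the plaque $\tau_{i+1}(t)$ of $h_{i+1}$ are a priori described in different charts, and it is precisely the compatibility of the charts — equivalently the observation cited above applied to the relevant sub-curve — that forces the two plaques to agree near $\alpha(t_{i+1})$ and lets the pieces be glued. Everything else is bookkeeping with the finitely many charts, their holonomies, and the open sets on which the compositions $\tau_i(\cdot)$ are defined; once the $\alpha_t$ are in hand the appeal to the previous lemma is immediate.
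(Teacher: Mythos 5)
Your proposal is correct and follows essentially the route the paper intends: the paper's one-line justification (``the first observation in the above proof yields the following'') is exactly the plaque-coordinate bookkeeping you carry out, combined with the nearby-curve characterization of trivial holonomy from the preceding lemma. You have simply made explicit the construction of the tracking curves $\alpha_t$ and the gluing at transition points, which the paper leaves to the reader.
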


By the leafwise distance between to points on the same leaf $L_x$ we mean the distance with respect to the Riemannian metric $g_{L_x}$.  

The following corollary amounts to the observation that if $x_n,y_n$ are two sequences of points converging to the same limit $x$ which belong to the same sequence of plaques with respect to some chart $h$ covering $x$, then the leafwise distance between $x_n$ and $y_n$ converges to $0$.
\begin{corollary}\label{nontrivialholonomy}
If a sequence of leafwise curves $\alpha_n:[0,1] \to X$ converges uniformly to a closed leafwise curve $\alpha$ and the leafwise distance between $\alpha_n(0)$ and $\alpha_n(1)$ doesn't converge to $0$, then $\alpha$ has non-trivial holonomy.
\end{corollary}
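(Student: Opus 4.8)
The plan is to argue by contraposition: I will assume that $\alpha$ has \emph{trivial} holonomy and deduce that the leafwise distance between $\alpha_n(0)$ and $\alpha_n(1)$ must tend to $0$, which contradicts the hypothesis. The key tool is the preceding lemma (the one immediately before Corollary \ref{trivialholonomycorollary}), which tells us that under the triviality assumption, for any foliated chart $h:U \to \R^d \times T$ with $\alpha(0) \in U$, the endpoints $\alpha_n(0)$ and $\alpha_n(1)$ lie in the same plaque of $h$ for all $n$ large enough. So the content of the corollary is really the geometric remark made just before its statement: \emph{if two sequences of points converging to the same limit eventually lie in a common sequence of plaques of a fixed chart, then their leafwise distances go to zero.}

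First I would fix a foliated parametrization $h^{-1} = k : \R^d \times T \to U$ around $x := \alpha(0)$, with $k(0,t_0) = x$. By the definition of a foliation, for each $t \in T$ the pullback metric $g_t$ on $\R^d$ varies smoothly in $t$ on compact sets as $t$ ranges over $T$. In particular, shrinking $U$ to the image of $\overline{B_\rho} \times V$ for a small Euclidean ball $\overline{B_\rho}$ and a neighborhood $V \ni t_0$, there is a uniform constant $\Lambda$ so that $g_t \le \Lambda^2 \cdot g_{\mathrm{euc}}$ on $\overline{B_\rho}$ for all $t \in \overline{V}$; this follows from continuity of $t \mapsto g_t$ and compactness. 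Then for two points $p = k(u,t)$ and $q = k(v,t)$ in the \emph{same} plaque (same $t$), the straight Euclidean segment from $u$ to $v$ (which stays in $B_\rho$ if $u,v$ are close to $0$) pushes forward to a leafwise path of $g_t$-length at most $\Lambda |u - v|$, so the leafwise distance satisfies $d_{L_x}(p,q) \le \Lambda |u-v|$.

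Now apply the previous lemma: since $\alpha$ has trivial holonomy, for $n$ large the endpoints $\alpha_n(0)$ and $\alpha_n(1)$ lie in a common plaque $k(\cdot, t_n)$; writing $\alpha_n(0) = k(u_n, t_n)$ and $\alpha_n(1) = k(v_n, t_n)$, uniform convergence $\alpha_n \to \alpha$ forces $u_n \to 0$ and $v_n \to 0$ (and $t_n \to t_0$), so $|u_n - v_n| \to 0$ and hence $d_{L_x}(\alpha_n(0), \alpha_n(1)) \le \Lambda |u_n - v_n| \to 0$. This contradicts the assumption that the leafwise distances do not converge to $0$, completing the contrapositive argument. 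The only mild subtlety — and the step I would expect to need the most care — is checking that the endpoints really fall inside the small coordinate region $\overline{B_\rho} \times \overline{V}$ where the uniform metric comparison holds; but this is immediate from uniform convergence $\alpha_n \to \alpha$ together with $\alpha(0) = \alpha(1) = x$, since both endpoints converge to the center of the chart.
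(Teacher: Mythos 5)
Your proposal is correct and follows exactly the route the paper intends: it invokes the characterization of trivial holonomy from the preceding lemma to place the endpoints in a common plaque, and then makes explicit the paper's one-line observation that points in the same plaque converging to a common limit have leafwise distance tending to $0$ (via a uniform comparison of the plaque metrics $g_t$ with the Euclidean metric near the chart center). No gaps; you have simply written out the details the paper leaves to the reader.
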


We say two leafwise curves $\alpha,\beta:[0,1] \to X$ are leafwise freely homotopic if they belong to the same leaf $L$ and there exists a continuous function $h:[0,1]\times [0,1] \to X$ such that $t \mapsto h(s,t) = h_s(t)$ is a leafwise closed curve in $L$ for all $s$, $h_0 = \alpha$ and $h_1 = \beta$.

\begin{lemma}\label{freehomotopy}
Let $X$ be a compact foliation and $\alpha:[0,1] \to X$ a closed leafwise curve with trivial holonomy.  Then any closed leafwise curve which is leafwise freely homotopic to $\alpha$ also has trivial holonomy.
\end{lemma}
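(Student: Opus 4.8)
The plan is to push triviality of the holonomy across the homotopy square one ``column'' at a time, reducing at each step to Corollary~\ref{trivialholonomycorollary}. Write $h\colon[0,1]\times[0,1]\to X$ for the free homotopy, with $h_s(t)=h(s,t)$, so that $h_0=\alpha$, $h_1=\beta$, every $h_s$ is a closed leafwise curve in the common leaf $L$, and $\gamma(s):=h(s,0)=h(s,1)$ is a leafwise curve joining the basepoint of $\alpha$ to that of $\beta$. Since $K:=h([0,1]^2)$ is compact and $X$ is compact, Lemma~\ref{admissiblerefinement} lets me cover $K$ by finitely many domains of pairwise compatible foliated charts. Pulling this cover back under $h$ and invoking the Lebesgue number lemma, I would choose partitions $0=s_0\l\cdots\l s_m=1$ and $0=t_0\l\cdots\l t_n=1$ fine enough that for each rectangle $R_{i,j}=[s_{i-1},s_i]\times[t_{j-1},t_j]$ the image $h(R_{i,j})$ lies in the domain $U_{i,j}$ of one of these compatible charts.

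Next, for each $i\in\{1,\dots,m\}$ I would form the closed chain of compatible charts $\mathcal C_i$ with successive domains $U_{i,1},U_{i,2},\dots,U_{i,n},U_{i,1}$. The choice of the partitions guarantees that for \emph{every} $s\in[s_{i-1},s_i]$ the chain $\mathcal C_i$ covers the closed leafwise curve $h_s$ with respect to the subdivision $0=t_0\l\cdots\l t_n=1$: indeed $h_s([t_{k},t_{k+1}])\subset h(R_{i,k+1})\subset U_{i,k+1}$, consecutive domains overlap along the point $h(s,t_j)$, and they are compatible because they were taken from a pairwise compatible atlas. In particular $\mathcal C_i$ covers both $h_{s_{i-1}}$ and $h_{s_i}$, and both basepoints $h_{s_{i-1}}(0)$ and $h_{s_i}(0)$ lie in the first domain $U_{i,1}$ of $\mathcal C_i$.

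The key point is that $h_{s_{i-1}}(0)$ and $h_{s_i}(0)$ have the \emph{same} transverse coordinate in the chart on $U_{i,1}$: they are joined by the leafwise curve $s\mapsto h(s,0)$ restricted to $[s_{i-1},s_i]$, whose image is contained in $h(R_{i,1})\subset U_{i,1}$, and a leafwise curve contained in the domain of a single foliated chart lies in a single plaque (as observed at the beginning of the proof of the lemma preceding Corollary~\ref{trivialholonomycorollary}). With this in hand I would argue by induction on $i$. The base case holds since $h_{s_0}=\alpha$ has trivial holonomy by hypothesis. If $h_{s_{i-1}}$ has trivial holonomy, then, since $\mathcal C_i$ is a closed chain of compatible charts covering $h_{s_{i-1}}$, Corollary~\ref{trivialholonomycorollary} shows the holonomy of $\mathcal C_i$ is the identity on a neighbourhood of the second coordinate of the image of $h_{s_{i-1}}(0)$; by the previous paragraph this is also a neighbourhood of the second coordinate of the image of $h_{s_i}(0)$. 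Since $\mathcal C_i$ also covers $h_{s_i}$ and its first chart is the one on $U_{i,1}$, the definition of trivial holonomy gives that $h_{s_i}$ has trivial holonomy. Taking $i=m$ yields that $\beta=h_{s_m}$ has trivial holonomy.

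I do not expect a genuine obstacle here: the only analytic content, Corollary~\ref{trivialholonomycorollary} together with the plaque property of leafwise curves, is already available, and what remains is the somewhat fussy verification that each $\mathcal C_i$ is a legitimate closed chain of compatible charts in the precise sense of the paper (overlapping consecutive domains, first chart equal to last, correct subdivision) and that it covers every $h_s$ with $s$ in the $i$-th subinterval. I would also remark that, taking $\gamma$ constant, the same argument yields the homotopy invariance of trivial holonomy referred to after the definition of the holonomy covering, and that a small variant gives the independence of the basepoint.
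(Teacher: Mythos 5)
Your proposal is correct and follows essentially the same strategy as the paper: cover the image of the homotopy by an admissible finite atlas, subdivide the $s$-parameter finely enough (a Lebesgue number argument), and propagate trivial holonomy from $h_{s_{i-1}}$ to $h_{s_i}$ inductively via Corollary~\ref{trivialholonomycorollary}. Your writeup is in fact more explicit than the paper's on the one delicate point --- that the basepoints $h_{s_{i-1}}(0)$ and $h_{s_i}(0)$ lie in the same plaque of the first chart, so the neighbourhood on which the chain's holonomy is the identity serves for both curves --- which the paper's terse ``uniform distance less than $\epsilon$'' formulation leaves implicit.
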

\begin{proof}
Take an admissible finite atlas $A$ of $X$ and let $\epsilon \g 0$ be the Lebesgue number of the associated open covering.  It follows from Corollary \ref{trivialholonomycorollary} that if two closed curves belong to the same leaf and are at uniform distance less than $\epsilon$ and one of them has trivial holonomy then they both do.

Letting $h_s$ be a homotopy between $\alpha$ and $\beta$ one can find times $s_0 = 0 \l \ldots \l s_r = 1$ such that $h_{s_i}$ is at uniform distance less than $\epsilon$ from $h_{s_{i+1}}$ for all $i=0,\ldots,r-1$ from which the lemma follows.
\end{proof}

\section{Convergence of leafwise functions}\label{leafwiseconvergencesection}

In this section we justify the claims on convergence of immersions into foliations which were used in the proof of Theorem \ref{semicontinuitytheorem}.

\subsection{Adapted distances}

Let $X$ be a compact foliation.  Denote by $\leafdistance$ the leafwise distance in $X$ which is defined by
\[\leafdistance(x,y) = \left\lbrace\begin{array}{ll}
                                   d_{L_x}(x,y) &\text{ if }y \in L_x.
                                   \\ +\infty &\text{otherwise.}
                                   \end{array}\right.\]
where $d_{L_x}$ is the Riemannian distance on the leaf $L_x$.

We call a distance $d$ on $X$ adapted if it metricizes the topology of $X$ and satisfies $d(x,y) \le \leafdistance(x,y)$.

Consider a smooth Riemannian manifold $X$ foliated by smoothly immersed leaves each of which inherits the ambient Riemannian metric (e.g. any example in Section \ref{definitionsandexamplessection}).  The Riemannian distance between two points $x,y \in X$ is the infimum of the lengths of arbitrary curves connecting them while the leafwise distance is the infimum among leafwise curves.  Hence one clearly has that the Riemannian distance is adapted to the foliation.  This makes the following result plausible.

\begin{lemma}\label{adapteddistance}
Every compact foliation has an adapted distance.
\end{lemma}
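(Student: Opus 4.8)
The plan is to build an adapted distance by hand, starting from a convenient local model and gluing via a partition of unity, then verifying the two required properties: that the resulting function metrizes the topology of $X$, and that it is dominated by the leafwise distance $\leafdistance$.

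First I would fix a finite admissible atlas $\{h_i : U_i \to \R^{d} \times T_i\}_{i \in I}$ of $X$ (possible by compactness and Lemma~\ref{admissiblerefinement}), and shrink it so that there is a second finite family of foliated charts $\{h_i' : U_i' \to \R^{d}\times T_i'\}$ with $\overline{U_i} \subset U_i'$. On each $U_i'$ I would define a ``local adapted pseudodistance'' $d_i$ as follows: transport the problem to $\R^d \times T_i'$ via $h_i'$, pick a bounded distance $\rho_i$ on $T_i'$ inducing its topology (Polish spaces admit bounded complete metrics), and for points $p = h_i'^{-1}(x,s)$, $q = h_i'^{-1}(y,t)$ set $d_i(p,q) = \min\!\left(1,\ \ell_i(x,y;s)\right)$ if $s = t$ (where $\ell_i(x,y;s)$ is the $g_s$-Riemannian distance between $x$ and $y$ in $\R^d$, i.e. the leafwise distance read in the chart), and $d_i(p,q) = 1 + \rho_i(s,t)$ if $s \neq t$. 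By the smooth-dependence condition~(2) in the definition of a foliated chart, $\ell_i$ depends continuously on $(x,y,s)$, so $d_i$ is continuous off the diagonal and metrizes the topology of $U_i'$; moreover by construction $d_i(p,q) \le \min(1,\leafdistance(p,q))$ for $p,q \in U_i'$ in the same leaf, and $d_i(p,q) = 1 + \rho_i(s,t) \le 1 = \min(1,\leafdistance(p,q))$ when they lie in different plaques (recall $\leafdistance = +\infty$ between distinct leaves, but I also need to handle same-leaf-different-plaque pairs inside one chart — there $\leafdistance \ge$ some positive constant bounded below away from $0$ on the compact set, which after rescaling $d_i$ by a small factor gives the inequality; I would absorb this rescaling into the definition of $d_i$).

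Next I would take a partition of unity $\{\chi_i\}$ subordinate to $\{U_i'\}$ with $\sum \chi_i \equiv 1$ and $\operatorname{supp}\chi_i \subset U_i'$, and define a ``length element'' on curves: for a path $\gamma$ in $X$, set $\operatorname{len}(\gamma) = \sum_i \operatorname{len}_{d_i}(\gamma \cap \text{region where }\chi_i > 0)$ weighted by $\chi_i$ — more precisely I would take the candidate distance $d(p,q) = \inf_\gamma \int_0^1 \sum_i \chi_i(\gamma(s))\,|\dot\gamma(s)|_{d_i}\,ds$ over paths $\gamma$ from $p$ to $q$, interpreting $|\dot\gamma|_{d_i}$ via the metric derivative of $d_i$ (infinite where $\gamma$ leaves $U_i'$). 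The point of the partition of unity is that near any point only finitely many $d_i$ contribute and they all induce the same local topology, so $d$ is a genuine distance inducing the topology of $X$. For the adapted inequality: given $p, q$ on the same leaf $L$ and any leafwise curve $\gamma$ from $p$ to $q$, each local piece contributes at most its $\leafdistance$-length (times a weight summing to one), so $d(p,q) \le \leafdistance(p,q)$; if $q \notin L_p$ then $\leafdistance(p,q) = +\infty$ and there is nothing to prove.

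**The main obstacle** I anticipate is the interplay between the ``cap at $1$'' (needed so that $d_i$ is finite and the $T_i'$-directions don't blow up) and the adaptedness inequality $d(p,q) \le \leafdistance(p,q)$ when $\leafdistance(p,q)$ is small: I must ensure the local models $d_i$ never overshoot the true leafwise distance even for nearby points in different plaques of the same chart, which forces the rescaling constant mentioned above and requires a compactness argument to show $\leafdistance$ is bounded below on ``plaque-jumping'' pairs within a fixed chart at a fixed non-infinitesimal scale. A cleaner alternative, which I would fall back on if the length-space construction gets technical, is to simply set $d = \sum_i 2^{-i}\,\bar d_i$ where $\bar d_i$ is a globally-defined bounded distance on $X$ extending (a small multiple of) $d_i$ — but extending a distance defined only on $U_i'$ to all of $X$ without violating the $\leafdistance$ bound is itself delicate, so I expect the length-structure approach with a careful partition of unity to be the right route, and verifying that $d$ is symmetric, satisfies the triangle inequality, and separates points (non-degeneracy, using that each $d_i$ does) to be the routine-but-lengthy part.
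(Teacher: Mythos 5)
There is a genuine gap, and it lies exactly at the crux of the lemma. Your local model sets $d_i(p,q)=1+\rho_i(s,t)$ whenever $p,q$ lie in distinct plaques of the chart, so any two points in different plaques are at $d_i$-distance at least $1$ no matter how close they are in $U_i'\cong\R^d\times T_i'$. Such a $d_i$ does not metrize the topology of $U_i'$ (it makes each plaque clopen), and the inequality you assert, $1+\rho_i(s,t)\le 1$, is false unless $s=t$. The failure propagates fatally into the length-structure gluing: the metric derivative $|\dot\gamma|_{d_i}$ of any curve whose transverse coordinate is non-constant is $+\infty$ (consecutive points on the curve lie in distinct plaques, hence are at $d_i$-distance $\ge 1$ while the parameter increment tends to $0$), and since $\sum_i\chi_i\equiv 1$ some weight is positive at every point. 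Hence only leafwise curves have finite length, your candidate $d$ is $+\infty$ between points on different leaves, and it cannot induce the topology of $X$ (consider the vinyl record foliation, where distinct leaves contain arbitrarily close points). The obstacle you flag --- the tension between the cap at $1$ and adaptedness on plaque-jumping pairs --- is real but secondary; the essential difficulty is the opposite one: the distance must be \emph{small} in the transverse direction (to metrize the topology) while remaining \emph{below} $\leafdistance$ in the leafwise direction, and your local model sacrifices the first property entirely to secure the second.

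For comparison, the paper's construction resolves exactly this tension. In a chart it starts from $\rho_1((x,t),(x',t'))=|x-x'|+d_T(t,t')$, which is genuinely small for transversally nearby points, and then forms a weighted chain infimum $\rho_2$ using a bump function $\varphi$ bounded by a constant $C$ chosen (by smooth precompactness of the family $\{g_s\}$) so that $C|y-y'|$ is a lower bound for the $g_s$-length of any curve joining $y$ and $y'$ in the unit ball. This yields $\rho_2\le C\rho_1$, hence $\rho_2\le\leafdistance$ along in-plaque segments, while the vanishing of $\varphi$ near the chart boundary lets $\rho_2$ extend by zero to a global continuous pseudodistance; a finite average of such pseudodistances is then an adapted distance. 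If you want to salvage your approach you would need to replace the ``$1+\rho_i$'' penalty by a transversally continuous local pseudodistance that is still dominated by leafwise length, which is essentially the paper's $\rho_2$; also note that your same-plaque formula $\min(1,\ell_i(x,y;s))$ already overshoots $\leafdistance(p,q)$ when a shorter leafwise path exits the chart, so even that half of the model needs the ``lower Lipschitz bound times Euclidean distance'' correction rather than the in-plaque Riemannian distance itself.
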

\begin{proof}
Let $X$ be a compact foliation.  We will construct an adapted distance by averaging pseudodistances obtained by a local construction.

Let $h:V \to \R^d \times T$ be a foliated chart and let $g_t$ be the family of Riemannian metrics on $\R^d$ parametrized by $t \in T$ obtained by pushforward of the leafwise metrics under $h$.  Fix a complete distance $d_T$ on $T$ and metrizice $\R^d \times T$ by defining
\[\rho_1((x,t),(x',t')) = |x-x'| + d_T(t,t')\]
for all $(x,t),(x',t') \in \R^d\times T$.

We observe that because $X$ is compact any point in $\R^d \times T$ has a compact neighborhood, and it follows that $T$ is locally compact.

Fix $(x,t) \in \R^d \times T$ and consider a precompact open neighborhood $S \subset T$ of $t$.  The family of Riemannian metrics on the Euclidean ball $B_1(x)$ of the form $g_s$ for $s \in S$ is smoothly precompact.  Hence there exists a constant $C \g 0$ such that the $g_s$-length of any curve between points $y,y' \in \overline{B_1}(x)$ is at least $C|y-y'|$ for all $s \in S$.

For this constant $C$ we choose a continuous function $\varphi:\R^d \times T \to [0,C]$ which is strictly positive exactly on the set $B_1(x) \times S$ and zero outside of it and define for all $(y,s),(y',s') \in \R^d \times T$
\[\rho_2((y,s),(y',s')) = \inf\left\lbrace \sum\limits_{i =0}^{k-1} \frac{\varphi(y_i,s_i) +\varphi(y_{i+1},s_{i+1})}{2}\cdot\rho_1((y_i,s_i),(y_{i+1},s_{i+1}))\right\rbrace\]
where the infimum is among all $k \in \N$ and all finite chains in $\R^d \times T$ with $(y_0,s_0) = (y,s)$ and $(y_k,s_k) = (y',s')$.

Because one can reverse a chain and concatenate two of them one obtains that $\rho_2$ is symmetric and satisfies the triangle inequality.  Notice also that $\rho_2$ is zero for any pair of points not in $B_1(x)\times S$.

Now consider $(y,s) \in B_1(x) \times S$ and the function $f(y',s') = \rho_2((y,s),(y',s'))$.  By the triangle inequality $f$ is constant outside of $B_1(x)\times S$.  Given $(y',s') \neq (y,s)$ one may choose $r \g 0$ such that the $\rho_1$-ball $B_{\rho_1,r}(y,s)$ of radius $r$ centered at $(y,s)$ doesn't contain $(y',s')$ and the values of $\varphi$ on this ball are bounded from below by a positive constant $\epsilon$.  For any finite chain $(y_i,s_i)$ joining $(y,s)$ and $(y',s')$ one may take the first $k$ such that $(y_k,s_k) \notin B_{\rho_1,r}(y,s)$ and since $\rho_1$ is a distance one has
\[\sum\limits_{i =0}^{k-1} \frac{\varphi(y_i,s_i) +\varphi(y_{i+1},s_{i+1})}{2}\cdot\rho_1((y_i,s_i),(y_{i+1},s_{i+1})) \ge \frac{1}{2}\epsilon r.\]

Hence $f$ is zero only at $(y,s)$.  Combined with the inequality $\rho_2 \le C \rho_1$ one obtains that $\rho_2$ is a continuous bounded pseudodistance on $\R^d \times T$ which is an actual distance when restricted to $B_1(x) \times S$ and which is zero on pairs of points not belonging to $B_1(x) \times S$.

Hence the pullback of $\rho_2$ to $V$ via $h$ can be extended to a bounded continuous pseudodistance $\rho:X \times X \to [0,+\infty)$ which is an actual distance when restricted to the open set $U = h^{-1}(B_1(x) \times S)$ and which is zero on pairs of points not belonging to this set.

We will now establish that $\rho(p,p') \le d_L(p,p')$ whenever $p$ and $p'$ are on the same leaf.  The only interesting case (i.e. $\rho \neq 0$) is if either $p$ or $p'$ belong to $U$.  Suppose $p \in U$ and let $\alpha:[0,1] \to X$ be the leafwise geodesic of length $\leafdistance(p,p')$ joining $p$ and $p'$.  There are two cases to consider: either $\alpha$ leaves $U$ or it doesn't.

If $\alpha([0,1]) \subset U$ then taking $\beta = h\circ \alpha$ and setting $(y,s) = \beta(0)$ and $(y',s') = \beta(1)$ one obtains that $s = s'$.  Since $\rho(p,p') = \rho_2((y,s),(y',s)) \le C|y-y'|$ which is a lower bound for the $g_s$ length of any curve joining $y$ and $y'$ one obtains that $\rho(p,p') \le \leafdistance(p,p')$ as claimed.

Now suppose that $\alpha$ leaves $U$ and take $T \in [0,1)$ so that $\beta = h\circ \alpha$ is well defined on $[0,T]$ and $\beta(T) \notin B_1(x)\times S$.   Setting $(y,s) = \beta(0)$ and $(y',s') = \beta(T)$ one obtains once again that $s = s'$ and that the $g_s$-length of $\beta$ is at least $C|y-y'|$ which is larger than $\rho(\alpha(0),\alpha(T)) = \rho_2((y,s),(y',s'))$.  If $p' \notin U$ we are done since $\rho(\alpha(T),p') = 0$.  Otherwise we take $T \l T_2\l 1$ so that $\beta_2 = h\circ\alpha$ is well defined on $[T_2,1]$ and repeat the preceeding argument to obtain that $\rho(p,p') \le \rho(p,\alpha(T)) + \rho(\alpha(T_2),p')$ is less than the length of $\alpha$ as claimed.

We have succeeded in constructing for each $p \in X$ an open neighborhood $U$ and a continuous bounded pseudodistance $\rho$ which is an actual distance when restricted to $U\times U$ and which satisfies $\rho(q,q') \le \leafdistance(q,q')$.   Covering $X$ with a finite number of such neighborhoods $U_1,\ldots,U_n$ with associated pseudodistances $\rho_1,\ldots,\rho_n$ and setting $d(p,q) = \frac{1}{n}\sum\limits_{i = 1}^{n} \rho_i(p,q)$ one obtains an adapted distance for the foliation $X$. 
\end{proof}

\subsection{Convergence of leafwise immersions}

We conclude the section with the following result which was used to proved Theorem \ref{semicontinuitytheorem} (recall that a function into a foliation is said to be leafwise if its image is contained in a single leaf).

\begin{lemma}\label{leafwiseconvergence}
Let $X$ be a compact foliation and $(M,o,g)$ be a complete pointed Riemannian manifold.  If $f_n:B_r(o) \to X$ is a sequence of leafwise functions such that $|f_n^*g_{L_{f_n(o)}} - g|_g$ converges to $0$ uniformly then there exists a subsequence converging locally uniformly to a leafwise local isometry $f:B_r(o) \to X$.
\end{lemma}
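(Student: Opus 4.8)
The plan is to combine an Arzelà--Ascoli argument with the local structure of foliated charts, and then upgrade the convergence enough for the pullback metrics to pass to the limit. First I would fix an adapted distance $d$ on $X$ (Lemma~\ref{adapteddistance}), so that $(X,d)$ is compact and $d\le\leafdistance$. The hypothesis $|f_n^*g_{L_{f_n(o)}}-g|_g\to 0$ uniformly gives, for $n$ large, the pointwise comparison $\tfrac12|v|_g\le |df_n(v)|_{g_{L_{f_n(o)}}}\le 2|v|_g$ for all tangent vectors $v$; hence $f_n$ multiplies the $g$-length of any curve by at most $2$, and working inside a small $g$-geodesically convex ball one gets $d(f_n(x),f_n(y))\le\leafdistance(f_n(x),f_n(y))\le 2\,d_g(x,y)$. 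Thus $\{f_n\}$ is equi-Lipschitz, hence equicontinuous, on compact subsets of $B_r(o)$; since $(X,d)$ is compact and $B_r(o)$ is $\sigma$-compact, Arzelà--Ascoli yields a subsequence (still written $f_n$) converging to a continuous $f:B_r(o)\to X$ uniformly on compact sets.

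Next I would check that $f$ is leafwise. Fix $p$ and a foliated chart $h:U\to\R^d\times T$ with $f(p)\in U$. For $n$ large and a small ball $B_\delta(p)$ one has $f_n(B_\delta(p))\subset U$; since each $f_n$ takes values in a single leaf and the plaques of $h$ contained in that leaf are pairwise disjoint open subsets of it (an injective immersion between manifolds of equal dimension is an open embedding), the connected set $f_n(B_\delta(p))$ lies in one plaque $h^{-1}(\R^d\times\{t_n\})$. Writing $h\circ f_n=(a_n,t_n)$ on $B_\delta(p)$ and letting $n\to\infty$, uniform convergence forces $t_n\to t_0$ (the $T$-coordinate of $h(f(p))$) and $a_n\to a$ uniformly, where $h\circ f=(a,t_0)$; so $f(B_\delta(p))$ lies in the plaque $h^{-1}(\R^d\times\{t_0\})$. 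Hence $f$ is locally leafwise, and as $B_r(o)$ is connected its image lies in a single leaf $L$, with $f:B_r(o)\to L$ continuous for the leaf topology.

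It remains to make the pullback metrics converge. In the coordinates above $f_n^*g_L=a_n^*g_{t_n}$, where $g_{t_n}$ is the leaf metric pulled to $\R^d$; by the second condition in the definition of a foliated parametrization $g_{t_n}\to g_{t_0}$ smoothly on compact sets, while by hypothesis $a_n^*g_{t_n}\to g$ uniformly. The pointwise comparison above shows $Da_n$ is uniformly bounded and uniformly bounded below on compact sets, so the $a_n$ are uniformly bi-Lipschitz immersions. The key step — and the main obstacle — is to promote this to $C^1_{loc}$-precompactness of $\{a_n\}$: once that is known, a further subsequence converges in $C^1_{loc}$ to $a$, whence $a^*g_{t_0}=\lim a_n^*g_{t_n}=g$, so $a$ is a local isometry onto an open subset of a plaque and therefore $f$ is a local isometry into $L$ near $p$; diagonalizing over a countable cover of $B_r(o)$ by such balls (the local $C^1$ limits agree on overlaps by uniqueness of $C^0$ limits) then produces a single subsequence along which $f_n\to f$ locally uniformly with $f$ a leafwise local isometry.

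The difficulty in the key step is that $C^0$-control on the pullback metrics $a_n^*g_{t_n}$ does not by itself bound the second derivatives of $a_n$. The point is that the codimension is zero, so approximate isometric immersions are rigid: restricting to small balls on which $g_{t_n}$ is nearly Euclidean and combining the vanishing of $\mathrm{curl}\,Da_n$ with a quantitative geometric rigidity estimate of Friesecke--James--M\"uller type, one shows $Da_n$ is uniformly $C^0$-close on such balls to a single constant (metric-orthogonal) matrix, which gives the required precompactness and in fact identifies the local limit, in suitable coordinates, with a rigid motion. In the situation where this lemma is applied — the maps $f_n$ arising from smooth convergence of leaves in Theorem~\ref{semicontinuitytheorem}, so that $a_n^*g_{t_n}\to g$ in $C^\infty_{loc}$ — the step is simpler: since $a_n$ is an exact isometry from $(B_\delta(p),a_n^*g_{t_n})$ onto its image it intertwines the Levi--Civita connections, so $D^2a_n$ is a universal expression in $Da_n$ and the (uniformly bounded) Christoffel symbols of $a_n^*g_{t_n}$ and of $g_{t_n}$, higher derivatives are bounded by induction, and Arzelà--Ascoli then gives $C^\infty_{loc}$-subconvergence directly.
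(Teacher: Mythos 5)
Your opening steps coincide with the paper's: the adapted distance of Lemma~\ref{adapteddistance} makes the $f_n$ locally uniformly Lipschitz, Arzel\`a--Ascoli gives a locally uniformly convergent subsequence, and the plaque argument in a foliated chart shows the limit is leafwise. The problem is the last third. The step you yourself flag as ``the main obstacle'' --- upgrading to $C^1_{loc}$ precompactness of the coordinate maps $a_n$ --- is not closed under the lemma's actual hypotheses. Your route (b) expresses $D^2a_n$ through the Christoffel symbols of $a_n^*g_{t_n}$, but the lemma only assumes $|f_n^*g_{L_{f_n(o)}}-g|_g\to 0$ uniformly, i.e.\ $C^0$ control of the pullback metrics; their Christoffel symbols need not be bounded, so this route proves only a weaker statement (and the lemma is invoked in Theorem~\ref{semicontinuitytheorem} exactly as stated, so one cannot quietly strengthen the hypothesis). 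Your route (a) overclaims what rigidity gives: the Friesecke--James--M\"uller estimate controls $\mathrm{dist}(Da_n,SO(d))$ in $L^2$, and even John's $L^\infty$ version yields only BMO-closeness of the gradient to a constant rotation, not the uniform $C^0$-closeness of $Da_n$ you assert; the reduction from the varying metrics $g_{t_n}$, $g$ to the Euclidean model is also only gestured at. As written, neither route establishes the key step.

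The paper's proof shows the detour through derivatives is unnecessary. After extracting the $C^0$ limit, one passes to the limit in \emph{distances}: for $y,z$ in a small ball, the approximate isometric-immersion property of $f_n$ forces the $g_{t_n}$-distance between the plaque coordinates of $f_n(y)$ and $f_n(z)$ to converge to $d_M(y,z)$, and since $g_{t_n}\to g_t$ smoothly (by the definition of a foliated parametrization) the limit map is distance-preserving from a small ball into $(\R^d,g_t)$. A distance-preserving map between Riemannian manifolds of the same dimension is automatically a smooth local isometry (Myers--Steenrod), which is all the conclusion requires. If you want to keep your architecture, replace the $C^1$-precompactness step by this metric argument; otherwise you must actually prove a uniform-gradient rigidity statement for approximate isometries with merely $C^0$-convergent pullback metrics, which is a substantially harder (and here superfluous) task.
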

\begin{proof}
The hypothesis implies that the $f_n$ are locally uniformly Lipschitz with respect to any adapted distance on $X$.  By the Arzelà-Ascoli Theorem there exists a subsequence $f_{n_k}$ which converges locally uniformly to a limit $f$.

Given $x \in B_r(o)$ we may consider a foliated parametrization $h:\R^d \times T \to U \subset X$ of a neighborhood of $f(x)$ such that $f(x) = h(0,t)$.  For each $s \in T$ let $g_s$ be the Riemannian metric on $\R^d$ obtained by pullback under $z \mapsto h(z,s)$.

Let $\epsilon \g 0$ be such that the $g_t$-ball centered at $0$ of radius $2 \epsilon$ is bounded with respect to the Euclidean metric on $\R^d$, the ball of radius $2\epsilon$ centered at $x$ is contained in $B_r(o)$, and $f(B_{2\epsilon}(x))$ is contained in the open set parametrized by $h$.

We will show that $\pi_1 \circ h^{-1}\circ f$ is an isometry from $B_\epsilon(x)$ into $\R^d$ with the Riemannian metric $g_t$ where $\pi_1:\R^d \times T \to \R^d$ is the projection onto the first coordinate.

For this purpose consider $y \in B_\epsilon(x)$.  The sequences $p_n = \pi_1\circ h\circ f_n(x)$ and $q_n = \pi_1\circ h\circ f_n(y)$ are eventually well defined and converge to $p = \pi_1\circ h\circ f(x)$ and $q = \pi_1\circ h\circ f(y)$ respectively.  Furthermore letting $t_n$ be the common coordinate in $T$ of $h\circ f_n(x)$ and $h\circ f_n(y)$ one has that the $g_{t_n}$-distance between $p_n$ and $q_n$ converges to $d_M(x,y)$ ($d_M$ being the Riemannian distance on $M$)\begin{footnote}{Here we use the fact that, if $n$ is large enough, the $g_{t_n}$-ball of radius $\epsilon$ is bounded in $\R^d$.}\end{footnote}.   Since $g_{t_n}$ converges smoothly on compact sets to $g_t$ one has that the $g_t$-distance between $p$ and $q$ equals $d_M(x,y)$ as claimed.
\end{proof}


\begin{thebibliography}{EMT77}

\bibitem[{\'A}C03]{alvarez-candel2003}
J.~A. {{\'A}lvarez L{\'o}pez} and A.~{Candel}.
\newblock Generic aspects of the geometry of leaves of foliations.
\newblock \url{http://www.csun.edu/~ac53971/research/kyoto_GF2003.pdf}, 2003.

\bibitem[And90]{anderson1990}
Michael~T. Anderson.
\newblock Convergence and rigidity of manifolds under {R}icci curvature bounds.
\newblock {\em Invent. Math.}, 102(2):429--445, 1990.

\bibitem[BBI01]{burago-burago-ivanov2001}
Dmitri Burago, Yuri Burago, and Sergei Ivanov.
\newblock {\em A course in metric geometry}, volume~33 of {\em Graduate Studies
  in Mathematics}.
\newblock American Mathematical Society, Providence, RI, 2001.

\bibitem[Bre10]{brewin2010}
Leo Brewin.
\newblock A brief introduction to cadabra: A tool for tensor computations in
  general relativity.
\newblock {\em Computer Physics Communications}, 181(3):489 -- 498, 2010.

\bibitem[CC81]{cantwell-conlon1981}
John Cantwell and Lawrence Conlon.
\newblock Reeb stability for noncompact leaves in foliated {$3$}-manifolds.
\newblock {\em Proc. Amer. Math. Soc.}, 83(2):408--410, 1981.

\bibitem[Che70]{cheeger1970}
Jeff Cheeger.
\newblock Finiteness theorems for {R}iemannian manifolds.
\newblock {\em Amer. J. Math.}, 92:61--74, 1970.

\bibitem[CLN85]{camacho-lins1985}
C{\'e}sar Camacho and Alcides Lins~Neto.
\newblock {\em Geometric theory of foliations}.
\newblock Birkh\"auser Boston Inc., Boston, MA, 1985.
\newblock Translated from the Portuguese by Sue E. Goodman.

\bibitem[DK00]{duistermaat-kolk2000}
J.~J. Duistermaat and J.~A.~C. Kolk.
\newblock {\em Lie groups}.
\newblock Universitext. Springer-Verlag, Berlin, 2000.

\bibitem[Ehr74]{ehrlich1974}
Paul~E. Ehrlich.
\newblock Continuity properties of the injectivity radius function.
\newblock {\em Compositio Math.}, 29:151--178, 1974.

\bibitem[Eic91]{eichhorn1991}
J{\"u}rgen Eichhorn.
\newblock The boundedness of connection coefficients and their derivatives.
\newblock {\em Math. Nachr.}, 152:145--158, 1991.

\bibitem[EMS77]{edwards-millett-sullivan1977}
Robert Edwards, Kenneth Millett, and Dennis Sullivan.
\newblock Foliations with all leaves compact.
\newblock {\em Topology}, 16(1):13--32, 1977.

\bibitem[EMT77]{epstein-millett-tischler1977}
D.~B.~A. Epstein, K.~C. Millett, and D.~Tischler.
\newblock Leaves without holonomy.
\newblock {\em J. London Math. Soc. (2)}, 16(3):548--552, 1977.

\bibitem[Eps72]{epstein1972}
D.~B.~A. Epstein.
\newblock Periodic flows on three-manifolds.
\newblock {\em Ann. of Math. (2)}, 95:66--82, 1972.

\bibitem[Eps76]{epstein1976}
D.~B.~A. Epstein.
\newblock Foliations with all leaves compact.
\newblock {\em Ann. Inst. Fourier (Grenoble)}, 26(1):viii, 265--282, 1976.

\bibitem[EV78]{epstein-vogt1978}
D.~B.~A. Epstein and E.~Vogt.
\newblock A counterexample to the periodic orbit conjecture in codimension
  {$3$}.
\newblock {\em Ann. of Math. (2)}, 108(3):539--552, 1978.

\bibitem[Hat02]{hatcher2002}
Allen Hatcher.
\newblock {\em Algebraic topology}.
\newblock Cambridge University Press, Cambridge, 2002.

\bibitem[Ily02]{ilyashenko2002}
Yu. Ilyashenko.
\newblock Centennial history of {H}ilbert's 16th problem.
\newblock {\em Bull. Amer. Math. Soc. (N.S.)}, 39(3):301--354, 2002.

\bibitem[Ina83]{inaba1983}
Takashi Inaba.
\newblock Reeb stability for noncompact leaves.
\newblock {\em Topology}, 22(1):105--118, 1983.

\bibitem[Pet06]{petersen2006}
Peter Petersen.
\newblock {\em Riemannian geometry}, volume 171 of {\em Graduate Texts in
  Mathematics}.
\newblock Springer, New York, second edition, 2006.

\bibitem[Ree47]{reeb1947}
Georges Reeb.
\newblock Vari\'et\'es feuillet\'ees, feuilles voisines.
\newblock {\em C. R. Acad. Sci. Paris}, 224:1613--1614, 1947.

\bibitem[Ree48]{reeb1948}
Georges Reeb.
\newblock Remarque sur les vari\'et\'es feuillet\'ees contenant une feuille
  compacte \`a groupe de {P}oincar\'e fini.
\newblock {\em C. R. Acad. Sci. Paris}, 226:1337--1339, 1948.

\bibitem[Sak83]{sakai1983}
Takashi Sakai.
\newblock On continuity of injectivity radius function.
\newblock {\em Math. J. Okayama Univ.}, 25(1):91--97, 1983.

\bibitem[SS09]{scardua-seade2009}
Bruno Sc{\'a}rdua and Jos{\'e} Seade.
\newblock Codimension one foliations with {B}ott-{M}orse singularities. {I}.
\newblock {\em J. Differential Geom.}, 83(1):189--212, 2009.

\bibitem[Ste74]{stefan1974}
P.~Stefan.
\newblock Accessibility and foliations with singularities.
\newblock {\em Bull. Amer. Math. Soc.}, 80:1142--1145, 1974.

\bibitem[Sus73]{sussmann1973}
H{\'e}ctor~J. Sussmann.
\newblock Orbits of families of vector fields and integrability of
  distributions.
\newblock {\em Trans. Amer. Math. Soc.}, 180:171--188, 1973.

\bibitem[Thu74]{thurston1974}
William~P. Thurston.
\newblock A generalization of the {R}eeb stability theorem.
\newblock {\em Topology}, 13:347--352, 1974.

\bibitem[Wat90]{waterhouse1990}
William~C. Waterhouse.
\newblock The absolute-value estimate for symmetric multilinear forms.
\newblock {\em Linear Algebra Appl.}, 128:97--105, 1990.

\end{thebibliography}
\end{document}